\edef\restoreparindent{\parindent=\the\parindent\relax}
\numberwithin{equation}{section}
\def\bign#1{\mathclose{\hbox{$\left#1\vbox to8.5\p@{}\right.\n@space$}}\mathopen{}}
\newcommand{\customlabel}[2]{%
   \protected@write \@auxout {}{\string \newlabel {#1}{{#2}{\thepage}{#2}{#1}{}} }%
   \hypertarget{#1}{}
}
\newtheorem{thm}{Theorem}[section]
\newtheorem{cor}[thm]{Corollary}
\newtheorem{prop}[thm]{Proposition}
\newtheorem{lem}[thm]{Lemma}
\theoremstyle{definition}
\theoremstyle{remark}
\newtheorem{rem}[thm]{Remark}
\newcommand{\la}{\langle}
\newcommand{\ra}{\rangle}
\newcommand{\xto}[1]{\xrightarrow{#1}}
\newcommand{\mres}{\mathbin{\vrule height 1ex depth 0pt width
0.1ex\vrule height 0.1ex depth 0pt width 1ex}}
\newcommand{\Cl}{\operatorname{Cl}}
\newcommand{\Int}{\operatorname{Int}}
\newcommand{\Div}{\operatorname{Div}}
\newcommand{\Span}{\operatorname{Span}}
\newcommand{\Co}{\operatorname{Co}}
\DeclareMathOperator*{\argmin}{arg\,min}
\newcommand{\Diam}{\operatorname{Diam}}
\newcommand{\Cone}{\operatorname{Cone}}
\let\epsilon\varepsilon
\let\emptyset\varnothing
\definecolor{Blue}{rgb}{0.024, 0.608, 0.839}
\title{A Strong Form of the Quantitative Wulff Inequality for Crystalline Norms}
\author{Kenneth DeMason}
\address{Department of Mathematics, The University of Texas at Austin,  2515 Speedway Stop C1200, Austin, Texas 78712-1202, USA}
\email{kdemason@utexas.edu}
\begin{document}
\begin{abstract} Quantitative stability for crystalline anisotropic perimeters, with control on the oscillation of the boundary with respect to the corresponding Wulff shape, is proven for $n\geq 3$. This extends a result of \cite{Neumayer2016} in $n=2$.
\end{abstract}
\maketitle
\renewcommand{\baselinestretch}{-5}\normalsize
\renewcommand{\baselinestretch}{1.0}\normalsize
\section{Introduction}
Wulff's construction, based on the classical theory of thermodynamics presented by Gibbs, provides a method for determining equilibrium shapes for crystals in media \cite{Wulff}. In this setting, the crystalline interfaces determine favorable orientations to minimize the so-called \textit{anisotropic perimeter} $\Phi$. Given a \textit{surface tension} $f:S^{n-1}\to [0,\infty)$, the anisotropic perimeter of a region $E\subset \mathbb{R}^n$ is defined as 
\begin{equation}
\Phi(E) = \int_{\partial^*E} f(\nu_E(x)) \ d\mathcal{H}^{n-1}(x).
\end{equation}
Here we use the standard notation and terminology, see \cite{Maggi}, where $E$ is a set of finite perimeter, $\partial^*E$ is its reduced boundary, and $\nu_E$ is the measure-theoretic outer unit normal.

This notion of perimeter arises naturally in crystallography when the $1$-homogeneous extension of $f$ to $\mathbb{R}^n$ is piecewise linear; in this case we call $f$ a \textit{crystalline} surface tension. The study of crystalline surface tensions is interesting from both a physical viewpoint due to its crystallographic connections \cite{GM,RW} and from a mathematical viewpoint \cite{Herring1951,McCann98,Taylor78}. 

From the perspective of statistical mechanics, identifying low-energy states of $\Phi$ is as important as identifying absolute minimizers, since they are interpreted as the most likely observable states of the system. Indeed, entropic contributions vary proportionally with temperature and thus always play a role in nature. It is therefore equally as interesting to know that the Wulff shape is the absolute minimizer of $\Phi$ at fixed volume as it is to describe, as precisely as possible, the shape of low-energy states.

The Wulff inequality, the anisotropic analogue of the isoperimetric inequality,
\begin{equation}\label{anisotropic Isop. ineq}
\Phi(E) \geq n|K|^{1/n}|E|^{(n-1)/n}
\end{equation}
characterizes the volume-constrained minimizers of the anisotropic perimeter as the corresponding \textit{Wulff shape} $K$, see \cite[Chapter 20]{Maggi}. Equality holds in \eqref{anisotropic Isop. ineq} if and only if $|E\Delta (rK+x)|=0$ for some $r>0$ and $x\in \mathbb{R}^n$. Due to this rigidity we can use the \textit{anisotropic deficit}
\begin{equation}\label{anisotropic deficit}
\delta_{\Phi}(E):= \frac{\Phi(E)}{n|K|^{1/n}|E|^{(n-1)/n}}-1
\end{equation}
to interpret the low-energy states as the regions $E$ for which $\delta_{\Phi}(E)$ is small. Indeed, by definition $\delta_{\Phi}(E)=0$ if and only if $E$ saturates \eqref{anisotropic Isop. ineq}. Note that $\delta_{\Phi}$ is scale invariant and thus detects deviations from $K$ due to shape alone.    

In \cite{FMP} the first identification of low-energy states of $\Phi$ in terms of control via $\delta_{\Phi}$ was obtained.  Here the authors show the existence of a universal constant $C(n)>0$ such that for any set of finite perimeter $E\subset \mathbb{R}^n$ with $0<|E|<\infty$,
\begin{equation}\label{sharp stability}
\alpha_{\Phi}(E)^2 \leq C\delta_{\Phi}(E).
\end{equation} 
In the above $\alpha_{\Phi}$ denotes the \textit{anisotropic Fraenkel asymmetry},
\begin{equation}\label{anisotropic asymmetry}
\alpha_{\Phi}(E) := \inf\left\lbrace \frac{|E\Delta (rK+x)|}{|E|} \ \bigg| \ |rK|=|E|, \ x\in \mathbb{R}^n\right\rbrace.
\end{equation}
Heuristically, this says that the low-energy states of $\Phi$ are close to the absolute minimizer in an $L^1$ sense. Due to the anisotropic nature of the problem \cite{FMP} relied on a Sobolev-Poincar\'{e} type trace inequality in conjunction with optimal transport methods as opposed to the symmetrization techniques available in the isotropic case, see the seminal result by \cite{FMP08}. More recently, \cite{CL} used a new technique known as the selection principle to re-prove the sharp quantitative isoperimetric inequality without appealing to symmetry; notably, this approach is applicable to the anisotropic setting.  

A particularly important class of surface tensions are the $\lambda$-elliptic surface tensions, for which the corresponding Wulff shape is $C^2$ and uniformly convex (with uniformity controlled by $\lambda>0$). In this case \cite{Neumayer2016}, using the ideas of \cite{CL} and \cite{FJ13}, improved the sharp stability \eqref{sharp stability} by showing that there exists $C(n,\lambda,\epsilon_{\Phi}, \|\nabla^2 f\|_{C^0(\partial K)})>0$
for every set of finite perimeter $E\subset \mathbb{R}^n$ with $0<|E|<\infty$,
\begin{equation}\label{strong sharp stability unif. elliptic}
\alpha_{\Phi}(E)^2+\beta_{\Phi}(E)^2 \leq C\delta_{\Phi}(E).
\end{equation}
Here $\beta_{\Phi}$ denotes the \textit{anisotropic oscillation index}
\begin{equation}\label{anisotropic oscillation}
\beta_{\Phi}(E):= \inf_{y\in \mathbb{R}^n}\left( \frac{1}{n|K|^{1/n}|E|^{(n-1)/n}}\int_{\partial^*E} f(\nu_E(x))-\frac{\la x-y, \nu_E(x)\ra}{f_*(x-y)} \ d\mathcal{H}^{n-1}(x)\right)^{1/2},
\end{equation}
where $f_*:\mathbb{R}^n\to [0,\infty)$ is the \textit{gauge function}, a $1$-homogeneous convex function related to the surface tension in a dual sense by
\begin{equation}
f_*(x) = \sup\{\la x,\nu \ra \ | \ f(\nu)\leq 1\}.
\end{equation}
The quantity $\beta_{\Phi}(E)$ was first introduced in the isotropic setting by \cite{FJ13} and controls the oscillation of the boundary with respect to a reference Wulff shape. Heuristically, \eqref{strong sharp stability unif. elliptic} says that low-energy states of $\Phi$ are close in an $H^1$ sense, see \cite[Proposition 1.9]{Neumayer2016}. 

We now turn to the crystalline setting, which as mentioned occurs when the $1$-homogeneous extension of $f$ to $\mathbb{R}^n$ is piecewise linear. Particularly, $\nabla^2 f =0$ a.e., in contrast to the $\lambda$-elliptic case, and the problem becomes degenerate elliptic. This degeneration also causes the Wulff shape $K$ to develop flat sides and become a polytope. Each facet $F_i$ of $K$ has a constant unit normal $\nu_i$ and lies at a distance ${\bf d_{\it{i}}} = f(\nu_i)$ from the origin. In this case $K$ is represented by
\begin{equation}
K = \bigcap_{i\in \mathscr{I}}\, \{x\in \mathbb{R}^n \ | \ \la x,\nu_i \ra < {\bf d_{\it{i}}}\}.
\end{equation}
In the above, $\mathscr{I}=\{1,...,N\}$ indexes the $N$ many facets of $K$. 

Recall as defined in \cite{FZ2019} that $E\subset \mathbb{R}^n$ is an $\epsilon$-minimizer for $\Phi$ if for every set of finite perimeter $F\subset \mathbb{R}^n$ with $|F|=|E|$ it holds
\begin{equation}
\Phi(E) \leq \Phi(F)+\epsilon |E\Delta F|.
\end{equation}
In particular when $|E|=|K|$ we have
\begin{equation}
\delta_{\Phi}(E) \leq \epsilon\left(\frac{|E\Delta K|}{\Phi(K)}\right) \leq \frac{2\epsilon}{n}.
\end{equation}
Thus $\epsilon$-minimizers are low-energy states of $\Phi$.   In fact, $\epsilon$-minimizers of $\Phi$ have been completely characterized, first for $n=2$ in \cite[Theorem 7]{FM2011} and later for $n\geq 3$ in \cite[Theorem 1.1]{FZ2019}, as polytopes with sides parallel to $K$. 

What remains then is to understand the remaining low-energy states which are not $\epsilon$-minimizers. Due to the anisotropic nature of the problem, little progress has been made in this direction from the lack of symmetrization techniques. The planar case was proven in \cite[Theorem 1.5]{Neumayer2016}, but the case for $n\geq 3$ remains open. The following theorem, the main result of this paper, proves this and therefore provides a natural conclusion to the above discussion.
\begin{thm}\label{Main Result} Let $f$ be a crystalline surface tension. There exists $C(n,K)>0$ such that for any $E\subset \mathbb{R}^n$ a set of finite perimeter with $0<|E|<\infty$,
$$\alpha_{\Phi}(E)^2+\beta_{\Phi}(E)^2 \leq C\delta_{\Phi}(E).$$
\end{thm}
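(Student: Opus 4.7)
The plan is to combine the selection principle of \cite{CL} with the crystalline characterization of $\epsilon$-minimizers of \cite{FZ2019}, extending the scheme used by \cite{Neumayer2016} in the planar case. I would argue by contradiction: if the theorem fails, then after normalizing $|E_k|=|K|$ and translating, the sharp stability \eqref{sharp stability} yields a sequence $E_k\to K$ in $L^1$ with $\delta_\Phi(E_k)\to 0$ and $(\alpha_\Phi(E_k)^2+\beta_\Phi(E_k)^2)/\delta_\Phi(E_k)\to +\infty$. A selection principle in the spirit of \cite{CL,Neumayer2016} then produces a new sequence $F_k\to K$ obtained by minimizing $\Phi$ plus a penalization tied to the volume constraint and to $\alpha_\Phi^2+\beta_\Phi^2$, arranged so that the bad quotient above is preserved up to constants and each $F_k$ is an $\epsilon_k$-minimizer of $\Phi$ with $\epsilon_k\to 0$.

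At this point \cite[Theorem 1.1]{FZ2019} provides the crucial higher-dimensional input unavailable to the planar argument: for $k$ large each $F_k$ is a polytope whose facet normals lie in $\{\nu_i\}_{i\in \mathscr{I}}$. The problem therefore reduces to the finite-dimensional family
\begin{equation}
F(t)=\bigcap_{i\in \mathscr{I}}\bigl\{x\in \mathbb{R}^n:\langle x,\nu_i\rangle<\mathbf{d}_i+t_i\bigr\},\qquad t=(t_i)_{i\in \mathscr{I}}\in \mathbb{R}^N,
\end{equation}
in a neighborhood of $K=F(0)$. On this family the functionals $\Phi(F(t))$, $|F(t)|$, $|F(t)\Delta K|$ and the integrand appearing in \eqref{anisotropic oscillation} are piecewise polynomial and $C^2$ near $t=0$; imposing $|F(t)|=|K|$ and fixing the optimal translation cuts the moduli down to a smooth reduced manifold of dimension roughly $N-n-1$.

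The final step is a second-order Taylor analysis on this reduced manifold: one writes
\begin{equation}
\delta_\Phi(F(t))=Q(t)+o(|t|^2)
\end{equation}
for a positive semidefinite quadratic form $Q$, and bounds $\alpha_\Phi(F(t))^2+\beta_\Phi(F(t))^2\leq C|t|^2$ modulo translations. If one can establish the coercivity $Q(t)\gtrsim \dist(t,\Ker Q)^2$ with $\Ker Q$ exactly the $n$-dimensional translation directions of $K$, the desired inequality holds on the family of polytopes with facets parallel to $K$, contradicting the behavior of the selected sequence $F_k$ and closing the argument.

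The main obstacle I foresee is precisely this coercivity and kernel identification. One must compute the second variation of $\Phi$ at the polytopal Wulff shape and rule out spurious zero modes coming from the combinatorial structure of $\partial K$ -- for instance, coherent shifts of facets sharing an edge or a vertex which might cost no volume or perimeter to first order. The case $n\geq 3$ is genuinely more delicate than the planar analysis of \cite{Neumayer2016} since in higher dimensions the face lattice of $K$ is richer and small perturbations can open entirely new facets (carrying a normal $\nu_i$ already present but at a new offset), so the quadratic form $Q$ must be analyzed in a way that faithfully accounts for these degenerations.
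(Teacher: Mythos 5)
Your overall architecture (contradiction, selection principle, reduction to polytopes parallel to $K$, finite-dimensional analysis) matches the spirit of the actual argument, but there are two genuine gaps. The first and most serious is the claim that the selected sets $F_k$ are $\epsilon_k$-minimizers of $\Phi$ with $\epsilon_k\to 0$, so that \cite[Theorem 1.1]{FZ2019} forces them to \emph{be} polytopes. The penalization you must add to preserve the bad quotient involves $\beta_{\Phi}(E)^2$, hence $\gamma_{\Phi}(E)$, and by Proposition \ref{Properties} this quantity is only H\"older continuous of exponent $(n-1)/n$ with respect to $L^1$ perturbations; minimizers of such a functional are quasi-minimizers (enough for uniform density estimates and Hausdorff convergence of the boundaries) but not $\epsilon$-minimizers in the sense required by the rigidity theorem, and no choice of vanishing penalization constants fixes this. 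The paper avoids the issue entirely: it never shows the selected sets are polytopes. Instead it invokes the weaker Theorem \ref{Replacement} (\cite[Theorem 1.4]{FZ2019}), which for an \emph{arbitrary} set $E$ near $K$ produces a parallel polytope $K^{\bf a*}$ with $\Phi(E)-\Phi(K^{\bf a*})\geq \gamma|E\Delta K^{\bf a*}|$, and then transfers the estimate from $K^{\bf a*}$ back to $E$ using a Lipschitz (not merely H\"older) bound $|\gamma_{\Phi}(E)-\gamma_{\Phi}(K^{\bf a*})|\leq \frac{1}{2}|E\Delta K^{\bf a*}|$, which is exactly where the uniform density estimates enter (Proposition \ref{Vanishing Gradient}). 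Without either the polytope rigidity or this Lipschitz transfer step, your argument cannot close.

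The second gap is the coercivity $Q(t)\gtrsim \dist(t,\Ker Q)^2$ that you correctly flag as the main obstacle: you leave it open, and the second-variation and kernel analysis you sketch is delicate for exactly the combinatorial reasons you mention. The paper sidesteps it. The non-degeneracy $\delta_{\Phi}(K^{\bf a})\geq C\|{\bf a}\|_{\ell^{\infty}}^2$ (after optimizing the translation) is not obtained from a Hessian computation but imported from the already known sharp inequality \eqref{sharp stability} combined with the metric equivalence $\|{\bf a}\|_{\ell^{\infty}}\leq C|K\Delta K^{\bf a}|$ of Lemma \ref{Equivalence}; and the bound $\beta_{\Phi}(K^{\bf a})^2\leq C\delta_{\Phi}(K^{\bf a})$ is then proven via an exact identity expressing $\delta_{\Phi}(K^{\bf a})$ in terms of facet areas, the anisotropic co-area formula, and quantitative facet-comparison estimates (Proposition \ref{Symmetric Difference Perturbed Facet Bound}), with all errors of size $O(\|{\bf a}\|_{\ell^{\infty}}^2)$ absorbed by the non-degeneracy. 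If you wish to complete your route you would need to actually establish the coercivity and rule out the spurious zero modes; borrowing the quadratic lower bound from \eqref{sharp stability} as the paper does is the cleaner path.
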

Let us comment briefly on the proof strategy. The planar case of this theorem proven in \cite{Neumayer2016} relied on the characterization of $\epsilon$-minimizers in \cite{FM2011}. Analogously, our result relies on this characterization extended to $n\geq 3$ by \cite{FZ2019} which crucially uses the following theorem.
\setcounter{thm}{0}
\renewcommand*{\thethm}{\Alph{thm}}
\begin{thm}[{\cite[Theorem 1.4]{FZ2019}}]\label{Replacement} There exist $\sigma(n,K)>0$ and $\gamma(n,K)>0$ such that for any set of finite perimeter $E\subset \mathbb{R}^n$ with $|E|=|K|$ and $|E\Delta K|\leq \sigma$ there exists $K^{\bf a}$ a polytope parallel to $K$ with $|K^{\bf a}|=|K|$ and
$$\Phi(E)-\Phi(K^{\bf a})\geq \gamma |E\Delta K^{\bf a}|.$$
\end{thm}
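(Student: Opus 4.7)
The plan is to prove Theorem A by (i) constructing $K^{\bf a}$ from $E$ through a cone-by-cone volume-matching procedure, and (ii) extracting linear --- not just quadratic --- control on $|E\Delta K^{\bf a}|$ from the piecewise-linear structure of $f$.

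\emph{Construction of $K^{\bf a}$.} Decompose $\mathbb{R}^n$ into the closed cones $V_i$ over the facets $F_i$ of $K$ with apex at the origin. Since $K$ is a convex polytope with $0$ in its interior, these cones have disjoint interiors and cover $\mathbb{R}^n$, and $|E\Delta K|\leq\sigma$ forces each $E\cap V_i$ to be close in measure to $K\cap V_i$. For each $i$ choose the offset $d_i^{\bf a}$ of the $i$-th facet of the candidate polytope so that $|K^{\bf a}\cap V_i|=|E\cap V_i|$; a final uniform rescaling of all $d_i^{\bf a}$ then enforces $|K^{\bf a}|=|K|$. For $\sigma$ small enough this keeps all $d_i^{\bf a}$ positive and ensures $K^{\bf a}$ is a genuine parallel polytope.

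\emph{Perimeter comparison.} Use the duality representation $\Phi(F) = \sup_T \int_{\partial^*F}\langle T,\nu_F\rangle\, d\mathcal{H}^{n-1}$, where the sup is over measurable vector fields $T:\mathbb{R}^n\to K$, and test against the piecewise-constant calibration $T_0$ equal to $d_i\nu_i$ on $V_i$. Then
$$\Phi(E)\geq \int_{\partial^*E}\langle T_0,\nu_E\rangle\,d\mathcal{H}^{n-1} = \int_E\div T_0,$$
where $\div T_0$ is understood as a measure including the distributional jumps of $T_0$ across the cone walls $\partial V_i$. The choice of $T_0$ makes the right-hand side equal $\Phi(K^{\bf a})$ when $E=K^{\bf a}$, so the comparison $\Phi(E)-\Phi(K^{\bf a})\geq 0$ reduces to an estimate of the slack, which has two sources: (a) a \emph{volumetric} term measuring angular deviation of $\nu_E$ from $\nu_i$ inside $V_i$, and (b) a \emph{skeletal} term coming from $E$ crossing the cone walls $\bigcup_i\partial V_i$.

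\emph{Linear control and the main obstacle.} In the $\lambda$-elliptic setting, Fuglede linearization produces only quadratic control on $|E\Delta K^{\bf a}|$. The crystalline case does better because piecewise linearity of $f$ gives a linear pointwise gap $f(\nu)-\langle T_0,\nu\rangle \geq c(K)\,\dist(\nu,\text{argmax region of } T_0)$ for $\nu$ outside the supporting cone of $T_0(x)$, so any misaligned boundary mass has a linear --- not quadratic --- cost. Combining this with Federer's projection inequality within each column $V_i$ converts the in-column slack into linear control on the portion of $|E\Delta K^{\bf a}|$ contained in that column. The principal obstacle is the skeletal term: where multiple cones meet along $(n-2)$-faces of $K$, the calibration $T_0$ jumps and $E$ can deposit mass straddling the walls that is not directly detected by any single facet-normal test. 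Resolving this in $n\geq 3$ presumably requires an induction on the dimension of faces of $K$ or a careful local volume-matching argument near each edge and vertex of $K$, and is the source of the combinatorial dependence of $\sigma(n,K)$ and $\gamma(n,K)$ on $K$. In $n=2$ this skeleton consists of finitely many rays and the analogous argument was carried out in \cite{FM2011}.
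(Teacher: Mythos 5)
This statement is Theorem A, which the paper itself does not prove: it is imported verbatim from \cite[Theorem 1.4]{FZ2019}, so the relevant benchmark is the proof in that reference, which is the main technical content of that paper rather than a short argument. Measured against that, your proposal is an outline whose argument stops exactly at the hard point. The cone decomposition and the sub-calibration $T_0=d_i\nu_i$ on $V_i$ give, essentially for free, $\Phi(E)\geq\int_{\partial^*E}\la T_0,\nu_E\ra\,d\mathcal{H}^{n-1}$ and a qualitative Wulff-type comparison; the entire content of the theorem in $n\geq 3$ is the quantitative control of what you call the skeletal term, namely the boundary mass of $E$ near the cones over the $(n-2)$-skeleton of $K$, where the piecewise-constant field jumps and no single facet-normal test detects misalignment linearly. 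You acknowledge this and write that it ``presumably requires an induction on the dimension of faces of $K$ or a careful local volume-matching argument'' --- but that deferred step is precisely the theorem, and without it the inequality $\Phi(E)-\Phi(K^{\bf a})\geq\gamma\,|E\Delta K^{\bf a}|$ has not been established. This is a genuine gap, not a stylistic difference.

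There are also concrete soft spots in the parts you do sketch. Admissibility of $T_0$ requires $d_i\nu_i\in\overline{K}$, i.e.\ that the foot of the perpendicular from the origin to the hyperplane of $F_i$ lies in that facet; for a general crystalline $K$ this can fail, and your linear pointwise gap $f(\nu)-\la T_0,\nu\ra\geq c(K)\,|\nu-\nu_i|$ needs the calibration value on $V_i$ to be an interior point of $F_i$, so the field must be chosen more carefully than $d_i\nu_i$. The claim that the test value equals $\Phi(K^{\bf a})$ when $E=K^{\bf a}$ is only approximate, since facets of $K^{\bf a}$ can cross the walls of the cones built over the facets of $K$; and converting a linear pointwise cost on misaligned normals into linear control of $|E\Delta K^{\bf a}|$ within a column is not a consequence of Federer's projection inequality alone --- it uses the volume normalization and the wall estimates in an essential way. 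Finally, your $K^{\bf a}$ is built by matching cone volumes followed by a global rescaling (which destroys the per-cone matching), whereas the construction in \cite{FZ2019}, which the rest of this paper relies on (see Remark \ref{Properties of Ka*}), is characterized by matching surface measures, $\mathcal{H}^{n-1}(\partial E\cap\Cone(F_i^{\bf a}))=\mathcal{H}^{n-1}(F_i^{\bf a})$, and comes with the bound $\|{\bf a}\|_{\ell^{\infty}}\leq C(n,K)\,|E\Delta K|$; if you alter the construction you must also re-derive that property, or the downstream arguments here (e.g.\ the proof of Proposition \ref{Vanishing Gradient}) lose their input.
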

\renewcommand*{\thethm}{\thesection.\arabic{thm}}
To expand on the notation above, we write $K^{\bf a}$ to denote the polytope obtained by perturbing $K$ via a vector ${\bf a}\in \mathbb{R}^N$ with $\|{\bf a}\|_{\ell^{\infty}}<1$ as follows:
\begin{equation}
K^{\bf a} = \bigcap_{i\in \mathscr{I}}\, \{x\in \mathbb{R}^n \ | \ \la x,\nu_i \ra < {\bf d_{\it{i}}}(1+{\bf a_{\it{i}}})\}.
\end{equation}
When ${\bf a}=0$ we have $K^{\bf a}=K$. The above procedure may produce a polytope having fewer sides than $K$, for example by perturbing the short side of a trapezoid to produce a triangle. This presents an issue as we frequently compare facets $F_i$ of $K$ with those of $K^{\bf a}$, necessitating they have the same unit normal. To ensure this occurs we need the technical condition that $K^{\bf a}$ is \textit{parallel} to $K$, defined by having the same number of sides or, equivalently, the same set of unit normals $\{\nu_i\}_{i=1}^N$. The collection of all such polytopes is denoted $\mathcal{C}_{\text{par}}(K)$.

In light of Theorem \ref{Replacement}, for sets of finite perimeter $E$ sufficiently close to $K$, we are able to essentially project them onto a polytope $K^{\bf a}$ also close to $K$ in terms of both having comparable volume and unit normals. This reduction allows us to leverage Theorem \ref{Replacement} and add control over $\beta_{\Phi}$.

Section \ref{Preliminaries} introduces some relevant results from \cite{Neumayer2016} and \cite{FZ2019}. Section \ref{Control Parallel Polytopes} proves Proposition \ref{Result for Parallel Polytopes}, the result for parallel polytopes. This is a delicate explicit computation involving estimates comparing the sizes of parallel facets, see Proposition \ref{Symmetric Difference Perturbed Facet Bound}. Section \ref{Proof of result} proves Theorem \ref{Main Result}. It first introduces some technical lemmas allowing us to control $\gamma_{\Phi}$, a quantity related to the oscillation index, in a strong way, via Proposition \ref{Vanishing Gradient}. It then proves the main result using the selection principle.

\subsection{Acknowledgments} The author wishes to thank Matias Delgadino, for reading preliminary drafts, several useful conversations, and constant encouragement; Francesco Maggi for useful suggestions; Marco Pozzetta for reading an early version; and Robin Neumayer for providing the author with this problem.

The author acknowledges the support of NSF-DMS RTG 1840314, the NSF Graduate Research Fellowship Program under Grant DGE 2137420, and UT Austin's Provost Graduate Excellence Fellowship.

\section{Summary of Relevant Results}\label{Preliminaries}
\subsection{Properties of anisotropic quantities} We first introduce $\gamma_{\Phi}$, which will allow us to more conveniently estimate $\beta_{\Phi}^2$. By applying the divergence theorem we can rewrite $\beta_{\Phi}(E)^2$ as
\begin{equation}\label{gamma}
\beta_{\Phi}(E)^2 = \frac{\Phi(E)-(n-1)\gamma_{\Phi}(E)}{n|K|^{1/n}|E|^{(n-1)/n}}, \ \ \ \gamma_{\Phi}(E):= \sup_{y\in \mathbb{R}^n}\int_{E}\frac{1}{f_*(x-y)} \ dx.
\end{equation}
\begin{rem}\label{Invariance} Note that $\alpha_{\Phi}(E)$, $\beta_{\Phi}(E)$, and $\gamma_{\Phi}(E)$ are all translation invariant. Moreover $\alpha_{\Phi}(E)$ and $\beta_{\Phi}(E)$ are scale invariant whereas $\gamma_{\Phi}(rE) = r^{n-1}\gamma_{\Phi}(E)$.
\end{rem}

The supremum defining $\gamma_{\Phi}(E)$ in \eqref{gamma} may not be attained at a unique point. We call any such point a \textit{center of} $E$, denoted $y_E$. The following lemma lets us estimate the norm of $y_E$. 
\begin{lem}[{\cite[Lemma 2.3]{Neumayer2016}}]\label{Center norm} For every $\epsilon>0$ there exists $\eta>0$ such that if $|E\Delta K|\leq \eta$ then $|y_E|<\epsilon$ for any center $y_E$ of $E$.
\end{lem}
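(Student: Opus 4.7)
The plan is to show that the functional $G_E(y):=\int_E 1/f_*(x-y)\,dx$ is uniformly stable under $L^1$-perturbations of $E$, decays at spatial infinity, and admits $y=0$ as its unique maximizer when $E=K$, so that centers of sets $L^1$-close to $K$ must cluster at the origin.

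First I would establish a uniform continuity estimate: since $f_*$ is equivalent to the Euclidean norm, $1/f_*\leq C/|\cdot|$, and the spherical-rearrangement bound $\int_A 1/|x-y|\,dx\leq C_n|A|^{(n-1)/n}$ yields
\[
\sup_{y\in\mathbb{R}^n}|G_E(y)-G_K(y)| \leq C|E\Delta K|^{(n-1)/n}.
\]
The same bound shows $G_K$ is continuous in $y$, and since $K$ is bounded, $G_K(y)\leq C|K|/|y|$ for $|y|$ large, so $G_K(y)\to 0$ as $|y|\to\infty$. Using $K=\{f_*<1\}$, the integrand $1/f_*$ is strictly radially decreasing with respect to the $f_*$-level sets, so the anisotropic Hardy--Littlewood rearrangement inequality gives
\[
\int_A \frac{1}{f_*(x)}\,dx \leq \int_{rK}\frac{1}{f_*(x)}\,dx,
\]
where $r$ is chosen so that $|rK|=|A|$, with equality iff $A=rK$ up to a null set. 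Applied to $A=K-y$ this yields $G_K(y)=G_{K-y}(0)\leq G_K(0)$, with equality iff $K-y=K$ a.e., i.e., $y=0$ since $K$ is bounded.

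I would then conclude by contradiction: if there were $\epsilon>0$ and a sequence $E_k$ with $|E_k\Delta K|\to 0$ admitting centers $y_k$ satisfying $|y_k|\geq \epsilon$, the uniform decay at infinity together with $G_{E_k}(0)\to G_K(0)>0$ would force $(y_k)$ to be bounded. Extracting a subsequence $y_k\to y_\infty$ with $|y_\infty|\geq\epsilon$, continuity yields $G_{E_k}(y_k)\to G_K(y_\infty)$, while the center property $G_{E_k}(y_k)\geq G_{E_k}(0)\to G_K(0)$ forces $G_K(y_\infty)\geq G_K(0)$, contradicting uniqueness. The main technical obstacle is this uniqueness step: verifying the rigidity of the strict Hardy--Littlewood inequality in the anisotropic setting and observing that the only translate of the bounded polytope $K$ coinciding with itself (up to measure zero) is the trivial one.
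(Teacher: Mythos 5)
Your proof is correct. Note that the paper does not prove this lemma at all — it is imported verbatim from \cite[Lemma 2.3]{Neumayer2016} — so there is no in-paper argument to compare against; your reconstruction (uniform $L^1$-stability of $y\mapsto\int_E 1/f_*(x-y)\,dx$ via the bound $\int_A |x-y|^{-1}\,dx\leq C_n|A|^{(n-1)/n}$, decay of $G_K$ at infinity, the bathtub rigidity identifying $y=0$ as the unique maximizer of $G_K$, and the compactness/contradiction step) is a complete and standard derivation, in the same spirit as the original. The only points worth making explicit are that the equality case of the bathtub principle is clean here because the level sets $\{f_*=c\}=c\,\partial K$ are Lebesgue-null, and that $K-y=K$ up to a null set forces $y=0$ by boundedness of $K$ — both of which you flag.
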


We summarize the important properties of $\beta_{\Phi}$ and $\gamma_{\Phi}$ here.
\begin{prop}[{\cite[Prop. 2.1]{Neumayer2016}}]\label{Properties} The function $\gamma_{\Phi}$ is H\"{o}lder continuous with respect to $L^1$ convergence, that is
$$|\gamma_{\Phi}(E)-\gamma_{\Phi}(F)| \leq \frac{n|K|}{(n-1)}|E\Delta F|^{(n-1)/n}.$$
Furthermore, for any sequence $\{E_j\}_{j=1}^{\infty}$ such that $E_j$ converges to $E$ in $L^1$ and $\{f^j\}_{j=1}^{\infty}$ a sequence of surface tensions converging locally uniformly to $f$, we have
\begin{enumerate}[label = \roman*)]
\item $\gamma_{\Phi_{\bullet}}$ is continuous;
$$\lim_{j\to \infty} \gamma_{\Phi_j}(E_j) = \gamma_{\Phi_j}(E).$$
\item $\beta_{\Phi_{\bullet}}$ and $\Phi$ are lower semi-continuous,
$$\beta_{\Phi}(E)\leq \liminf_{j\to \infty} \beta_{\Phi_j}(E_j), \ \ \ \Phi(E)\leq \liminf_{j\to \infty} \Phi(E_j).$$
Additionally if $\Phi(E_j)\to \Phi(E)$ then $\beta_{\Phi}(E_j)\to \beta_{\Phi}(E)$ too.
\end{enumerate}
\end{prop}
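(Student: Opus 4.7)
The plan is to establish the H\"older estimate first, from which part (i) will follow by a compactness argument on near-maximizers, and then to deduce (ii) by combining (i) with classical lower semi-continuity of the anisotropic perimeter. For the H\"older bound, I would start from the elementary estimate
\begin{equation}
\left| \int_E \frac{dx}{f_*(x-y)} - \int_F \frac{dx}{f_*(x-y)} \right| \leq \int_{E\Delta F} \frac{dx}{f_*(x-y)},
\end{equation}
valid for each $y\in\mathbb{R}^n$. The key observation is that the super-level sets of $1/f_*(\cdot-y)$ are translated, scaled Wulff shapes: $\{1/f_*(\cdot-y)>t\}=y+t^{-1}K$. By the bathtub principle (layer-cake), the right side at fixed mass $m:=|E\Delta F|$ is maximized when $E\Delta F=y+rK$ with $r=(m/|K|)^{1/n}$; a one-dimensional integration on this Wulff shape then yields the stated H\"older bound after taking the supremum in $y$.

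For part (i), local uniform convergence $f^j\to f$ on $S^{n-1}$ implies, via the duality formula for the gauge, that $f_*^j\to f_*$ locally uniformly on $\mathbb{R}^n$, and that the Wulff shapes $K_j$ converge to $K$. The H\"older bound applied with $\Phi=\Phi_j$ then gives
\begin{equation}
|\gamma_{\Phi_j}(E_j)-\gamma_{\Phi_j}(E)|\leq C(n,K_j)|E_j\Delta E|^{(n-1)/n}\to 0,
\end{equation}
so it remains to show $\gamma_{\Phi_j}(E)\to \gamma_\Phi(E)$. Picking near-maximizers $y_j$ of $\gamma_{\Phi_j}(E)$, I would verify that $y\mapsto \int_E dx/f_*^j(x-y)$ decays as $|y|\to\infty$ uniformly in $j$---for instance by applying the H\"older bound to $E\cap B_R^c$ and exploiting that $1/f_*^j$ is uniformly comparable to $1/|\cdot|$---so $\{y_j\}$ is bounded. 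Passing to a subsequence $y_j\to y_\infty$, dominated convergence with majorant $C/|x-y_\infty|$ then delivers the claim.

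For part (ii), lower semi-continuity of $\Phi$ under joint $L^1$ convergence of $E_j$ and local uniform convergence of $f^j$ is a classical Reshetnyak-type statement for convex, $1$-homogeneous integrands. Combining this with the continuity of $\gamma_{\Phi_\bullet}$ from (i) and the identity \eqref{gamma}, lower semi-continuity of $\beta_{\Phi_\bullet}^2$ (and hence of $\beta_{\Phi_\bullet}$) follows at once. Under the additional assumption $\Phi_j(E_j)\to\Phi(E)$, the entire numerator in \eqref{gamma} converges, upgrading the lower semi-continuity to full continuity.

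The main obstacle, as already flagged, is the uniform boundedness of the near-maximizers $y_j$ in part (i): everything else---the bathtub-principle computation, the Reshetnyak lower semi-continuity, and the manipulations via \eqref{gamma}---is essentially routine once this uniform control is in hand.
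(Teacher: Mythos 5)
The paper gives no proof of this proposition---it is imported verbatim from \cite[Prop.~2.1]{Neumayer2016}---and your outline reproduces the argument used there: the H\"older bound via layer-cake rearrangement of $E\Delta F$ onto a Wulff shape $y+rK$ (note your computation actually produces the sharp constant $\tfrac{n}{n-1}|K|^{1/n}$, which suggests the exponent on $|K|$ in the statement as transcribed here is a slip), compactness of near-maximizers for the continuity of $\gamma_{\Phi_{\bullet}}$, and Reshetnyak lower semicontinuity for $\Phi$. The one imprecision is the final dominated-convergence step: $C/|x-y_\infty|$ does not dominate $1/f_*^j(x-y_j)$ when $y_j\neq y_\infty$, so you should first replace $y_j$ by $y_\infty$ using continuity of translation in $L^1$ combined with your own H\"older bound (or invoke a generalized dominated convergence theorem); this is routine and does not affect the correctness of the approach.
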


\subsection{Technical results} The following results come from \cite{Neumayer2016} and \cite{FZ2019}. 

In the proof of Theorem \ref{Main Result} we will need to apply the selection principle. To do this we will assume Theorem \ref{Main Result} is false and generate a sequence $\{E_j\}_{j=1}^{\infty}$. This sequence ends up being too generic to use, and we must replace it with a new sequence $\{F_j\}_{j=1}^{\infty}$ with more regularity, which will be obtained as minimizers of a certain functional. The first step in this is the following lemma, which allows us to restrict our attention to those sets $E$ contained in some ball. 
\begin{lem}[{\cite[Lemma 3.1]{Neumayer2016}}]\label{Ball containment} There exist constants $R_0(n,K)>0$ and $C(n,K)>0$  such that for any set of finite perimeter $E$ with $|E|=|K|$ we can find $E'\subset B_{R_0}$ such that $|E'|=|K|$ and
$$\beta_{\Phi}(E)^2 \leq \beta_{\Phi}(E')^2+C\delta_{\Phi}(E), \ \ \ \delta_{\Phi}(E')\leq C\delta_{\Phi}(E).$$
\end{lem}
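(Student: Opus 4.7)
The plan is a case split on the size of $\delta_\Phi(E)$, with the main work concentrated in the small-deficit regime.

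If $\delta_\Phi(E)\geq c_0$ for a threshold $c_0(n,K)>0$ to be fixed below, simply set $E'=K$. Then $|E'|=|K|$, $E'\subset B_{\diam(K)}$, $\delta_\Phi(E')=0$, and since $\gamma_\Phi\geq 0$,
\[
\beta_\Phi(E)^2\leq\frac{\Phi(E)}{n|K|}=1+\delta_\Phi(E)\leq(1+c_0^{-1})\delta_\Phi(E),
\]
completing both estimates. So suppose $\delta_\Phi(E)<c_0$. By \eqref{sharp stability} and the translation invariance of $\alpha_\Phi$, $\beta_\Phi$, $\delta_\Phi$ (Remark \ref{Invariance}), we may assume $|E\Delta K|\leq C\sqrt{\delta_\Phi(E)}|K|\leq\eta_0$ for a small $\eta_0(n,K)$ to be fixed, provided $c_0$ is small enough. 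In particular $|E\setminus B_{R_*}|\leq\eta_0$, where $R_*:=\diam(K)$.

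The key truncation step is as follows. By coarea there exists $R\in[R_*,R_*+1]$ with $\mathcal{H}^{n-1}(\partial B_R\cap E)\leq|E\setminus B_{R_*}|$; set $v:=|E\setminus B_R|\leq\eta_0$. Applying the Wulff inequality \eqref{anisotropic Isop. ineq} separately to $E\cap B_R$ and $E\setminus B_R$, combining with the identity
\[
\Phi(E\cap B_R)+\Phi(E\setminus B_R)=\Phi(E)+2\int_{\partial B_R\cap E}f(\nu_{B_R})\,d\mathcal{H}^{n-1},
\]
and using the Taylor bound $n|K|^{1/n}(|K|-v)^{(n-1)/n}\geq n|K|-(n-1)v$, one obtains
\[
n|K|^{1/n}v^{(n-1)/n}\leq n|K|\delta_\Phi(E)+Cv,
\]
which for $v$ small yields the crucial tail bound $v\leq C\delta_\Phi(E)^{n/(n-1)}$.

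Now define $E':=(E\cap B_R)\cup(\mu K+x_0)$ where $\mu:=(v/|K|)^{1/n}$ and $x_0$ is chosen so that $\mu K+x_0$ is disjoint from $B_R$ and contained in a fixed ball $B_{R_0}$ (possible since $\mu\leq(\eta_0/|K|)^{1/n}$ is bounded by a constant depending only on $n,K$). Then $|E'|=|K|$, $E'\subset B_{R_0}$, and $\Phi(E')=\Phi(E\cap B_R)+n|K|\mu^{n-1}$. Combining with the identity above and the Wulff lower bound $\Phi(E\setminus B_R)\geq n|K|\mu^{n-1}$ yields $\Phi(E')\leq\Phi(E)+Cv$, and hence $\delta_\Phi(E')\leq\delta_\Phi(E)+Cv\leq C\delta_\Phi(E)$ via the $v$-bound. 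For the oscillation estimate, expand $\beta_\Phi(E)^2-\beta_\Phi(E')^2$ via \eqref{gamma}: the term $\Phi(E)-\Phi(E')$ is bounded above by $\Phi(E\setminus B_R)\leq n|K|\delta_\Phi(E)+Cv$ (using the same identity plus the Wulff lower bound on $\Phi(E\cap B_R)$), while $|\gamma_\Phi(E)-\gamma_\Phi(E')|\leq C|E\Delta E'|^{(n-1)/n}=C(2v)^{(n-1)/n}\leq C\delta_\Phi(E)$ by the Hölder continuity in Proposition \ref{Properties} and the $v$-bound. Summing gives $\beta_\Phi(E)^2\leq\beta_\Phi(E')^2+C\delta_\Phi(E)$.

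The principal obstacle is the tail estimate $v\leq C\delta_\Phi(E)^{n/(n-1)}$: the naive bound $v\leq|E\Delta K|\leq C\sqrt{\delta_\Phi(E)}$ coming from sharp stability alone has the wrong exponent, and only by applying the Wulff inequality simultaneously on both pieces of the truncation does one extract the supercritical power necessary to close both the $\delta_\Phi$ and the $\beta_\Phi$ inequalities with a \emph{linear} dependence on $\delta_\Phi(E)$.
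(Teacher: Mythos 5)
Your overall architecture (dispose of the large-deficit case with $E'=K$; translate so $|E\Delta K|\lesssim\sqrt{\delta_\Phi(E)}$ via \eqref{sharp stability}; truncate at a good radius; restore volume with a disjoint small Wulff shape; control $\gamma_\Phi$ via the H\"older continuity of Proposition \ref{Properties}) is the right one --- it is essentially the Fusco--Julin truncation scheme that \cite{Neumayer2016} adapts. But there is a genuine gap at the radius selection, and it propagates into both final estimates. Averaging the coarea formula over the single unit interval $[R_*,R_*+1]$ only yields $\mathcal{H}^{n-1}(\partial B_R\cap E)\leq|E\cap(B_{R_*+1}\setminus B_{R_*})|\leq|E\setminus B_{R_*}|$, and this quantity is controlled only by $|E\Delta K|\lesssim\sqrt{\delta_\Phi(E)}$, not by $v=|E\setminus B_R|$: the annulus $B_R\setminus B_{R_*}$ may carry mass of order $\sqrt{\delta_\Phi(E)}$ even when $v$ is tiny. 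Consequently your key inequality acquires an extra term $CM_\Phi\,\mathcal{H}^{n-1}(\partial B_R\cap E)\lesssim\sqrt{\delta_\Phi(E)}$ on the right; it reads $n|K|^{1/n}v^{(n-1)/n}\leq n|K|\delta_\Phi(E)+Cv+C\sqrt{\delta_\Phi(E)}$, which gives only $v\lesssim\delta_\Phi(E)^{n/(2(n-1))}$, not $\delta_\Phi(E)^{n/(n-1)}$. The same slice term enters $\Phi(E')\leq\Phi(E)+2M_\Phi\,\mathcal{H}^{n-1}(\partial B_R\cap E)$, so you only get $\delta_\Phi(E')\leq\delta_\Phi(E)+C\sqrt{\delta_\Phi(E)}$, and likewise $|\gamma_\Phi(E)-\gamma_\Phi(E')|\lesssim v^{(n-1)/n}\lesssim\sqrt{\delta_\Phi(E)}$. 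Neither conclusion of the lemma follows as written.

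The missing idea is the differential-inequality selection of the truncation radius. Setting $u(r)=|E\setminus B_r|$, the two-sided Wulff argument you sketch gives, for a.e.\ $r$, $\tfrac12 n|K|^{1/n}u(r)^{(n-1)/n}\leq n|K|\delta_\Phi(E)+CM_\Phi(-u'(r))$ once $u$ is small, since $-u'(r)=\mathcal{H}^{n-1}(E^{(1)}\cap\partial B_r)$. On the set of radii where $u(r)^{(n-1)/n}\geq 4|K|^{(n-1)/n}\delta_\Phi(E)$ this forces $-\tfrac{d}{dr}u^{1/n}\geq c(n,K)>0$, so within a bounded distance one either reaches $u=0$ (take $E'=E$, already contained in a fixed ball) or a first radius $\bar r$ with $u(\bar r)\leq C\delta_\Phi(E)^{n/(n-1)}$. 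A second averaging, now over $[\bar r,\bar r+1]$ where the annulus mass is at most $u(\bar r)$, produces $R$ with both $v=u(R)\leq C\delta_\Phi(E)^{n/(n-1)}$ and $\mathcal{H}^{n-1}(\partial B_R\cap E)\leq C\delta_\Phi(E)^{n/(n-1)}$, after which your construction of $E'$ and both estimates close. (A minor separate point: $f$ need not be even, so the cut-and-paste identity should carry $f(\nu_{B_R})+f(-\nu_{B_R})$ rather than $2f(\nu_{B_R})$; this is harmless since both are bounded by $M_\Phi$.)
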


To replace the $E_j$ we construct $F_j$ as minimizers of the functional
$$Q(E) = \Phi(E) + \epsilon_{\Phi}|K||\beta_{\Phi}(E)^2-\epsilon^2| + \Lambda\big||E|-|K|\big|$$
for $0<\epsilon<1$ and $\Lambda>0$; in practice we take $\epsilon = \beta_{\phi}(E_j)$. Here, $\epsilon_{\Phi}=m_{\Phi}/M_{\Phi}$ the so-called eccentricity, where 
\begin{equation}
M_{\Phi}=\sup_{\nu \in S^{n-1}} f(\nu) \ \ \ \text{and} \ \ \ m_{\Phi} = \inf_{\nu \in S^{n-1}} f(\nu).
\end{equation}
\indent
To explain the relevance of $Q$, the latter two terms are penalty terms to control the oscillation and volume. Since these penalizations should be small, minimizers of $Q$ are almost minimizers of $\Phi$, and thus should be comparable to $K$. Indeed by the rigidity of \eqref{anisotropic Isop. ineq} we know that volume-constrained minimizers of $\Phi$ are the Wulff shape $K$ up to translation. We can reformulate this without the volume constraint as
\begin{lem}[{\cite[Lemma 3.5]{Neumayer2016}}]\label{Unique Min} Let $R_0>\Diam(K)$ and $\Lambda>n$. Then up to translation $K$ is the unique minimizer of the functional
$$\Phi(E) + \Lambda \big||E|-|K|\big|$$
over $E\subset B_{R_0}$.
\end{lem}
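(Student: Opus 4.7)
The plan is to separate into the two cases $|E|=|K|$ and $|E|\neq |K|$ and lean on the Wulff inequality \eqref{anisotropic Isop. ineq} together with its rigidity. Since $R_0 > \Diam(K)$, there is a translate $K+x_0 \subset B_{R_0}$ which is admissible and realizes the value $\Phi(K) = n|K|$, so the claim is precisely that this value is attained \emph{only} by translates of $K$.

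In the case $|E| = |K|$, the penalty term vanishes and the functional reduces to $\Phi(E)$. The Wulff inequality gives $\Phi(E) \geq n|K|^{1/n}|E|^{(n-1)/n} = n|K|$, with equality iff $E = rK + x$ for some $r > 0$ and $x \in \mathbb{R}^n$. The volume constraint forces $r = 1$, so $E$ is a translate of $K$, as desired.

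In the case $|E| \neq |K|$, writing $t := |E|/|K|$, the Wulff inequality gives
$$\Phi(E) + \Lambda \big||E| - |K|\big| \geq |K|\, g(t), \qquad g(t) := n t^{(n-1)/n} + \Lambda |t - 1|.$$
A short one-variable check shows that $g$ is strictly increasing on $[1, \infty)$ with $g(1) = n$, while on $(0,1)$ the derivative $g'(t) = (n-1) t^{-1/n} - \Lambda$ is monotone decreasing in $t$ and negative at $t = 1^-$ (since $\Lambda > n - 1$), so $g$ first increases then decreases on $(0,1)$ and attains its minimum over $[0,1]$ at an endpoint. Since $g(0) = \Lambda > n = g(1)$, it follows that $g(t) > n$ for all $t \neq 1$. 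Thus $\Phi(E) + \Lambda\big||E|-|K|\big| > n|K|$, and such an $E$ cannot minimize.

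There is no substantive obstacle beyond this one-variable calculation. The hypothesis $\Lambda > n$ is used exactly to force $g(0) > g(1)$, ruling out small-volume (and in particular empty) competitors, while $R_0 > \Diam(K)$ ensures that some translate of $K$ itself fits inside $B_{R_0}$ and hence is admissible.
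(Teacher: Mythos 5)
Your proof is correct and takes essentially the same route as the cited source \cite[Lemma 3.5]{Neumayer2016} (the present paper only quotes this lemma and does not reprove it): compare with a translate of $K$ inside $B_{R_0}$, apply the Wulff inequality, and reduce to the one-variable function $g(t)=nt^{(n-1)/n}+\Lambda|t-1|$. As a minor simplification, your endpoint analysis on $(0,1)$ can be replaced by the observation that $t^{(n-1)/n}\geq t$ for $t\in[0,1]$, whence $g(t)\geq nt+\Lambda(1-t)=n+(\Lambda-n)(1-t)>n$ for $t<1$.
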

Note the similarity between the functional in Lemma \ref{Unique Min} and $Q$. The following lemma guarantees that minimizers of $Q$ exist.
\begin{lem}[{\cite[Lemma 3.2]{Neumayer2016}}]\label{Minimize Q} A minimizer for the problem
$$\min\{Q(E) \ | \ E\subset B_{R_0}\}$$
exists for $\Lambda>4n$ and $\epsilon>0$ sufficiently small. Moreover, any minimizer $F$ satisfies
\begin{equation}\label{Minimize Q Properties}
|F| \geq \frac{|K|}{2}, \ \ \ \ \Phi(F) \leq 2n|K|=2\Phi(K).
\end{equation}
\end{lem}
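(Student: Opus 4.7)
The plan is to apply the direct method of the calculus of variations. Let $\{E_j\}\subset B_{R_0}$ be a minimizing sequence for $Q$. Since $R_0>\Diam(K)$, testing with $E=K$ and using $\beta_{\Phi}(K)=0$ (which follows from the equality case in \eqref{anisotropic Isop. ineq} together with the formulation \eqref{gamma}) yields
$$\inf Q \leq Q(K) \leq n|K|+\epsilon_{\Phi}|K|\epsilon^2 \leq 2n|K|$$
once $\epsilon$ is small. Each of the three nonnegative summands of $Q$ being controlled then produces, along the sequence, uniform bounds on $\Phi(E_j)$, on $|E_j|$, and hence (via $P\leq \Phi/m_{\Phi}$) on the Euclidean perimeters $P(E_j)$. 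Standard $BV$ compactness in $B_{R_0}$ gives a subsequence (not relabelled) with $E_j\to F$ in $L^1$ for some $F\subset B_{R_0}$. Extracting further we arrange $\Phi(E_j)\to \ell\geq \Phi(F)$ and $\beta_{\Phi}(E_j)^2\to b_\infty$, together with $|E_j|\to |F|$ and $\gamma_{\Phi}(E_j)\to \gamma_{\Phi}(F)$ by Proposition \ref{Properties}.

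The main obstacle is the lack of lower semicontinuity of the middle term $\epsilon_{\Phi}|K|\,|\beta_{\Phi}(\cdot)^2-\epsilon^2|$: the absolute value destroys monotonicity, so $\beta_{\Phi}(F)^2\leq b_\infty$ does not by itself imply $|\beta_{\Phi}(F)^2-\epsilon^2|\leq |b_\infty-\epsilon^2|$. To circumvent this I will use the identity
$$n|K|^{1/n}|E|^{(n-1)/n}\beta_{\Phi}(E)^2 = \Phi(E)-(n-1)\gamma_{\Phi}(E)$$
from \eqref{gamma}, which, combined with the continuity of $\gamma_{\Phi}$ and of $|E|^{(n-1)/n}$, rewrites the defect as
$$b_\infty - \beta_{\Phi}(F)^2 = \frac{\ell-\Phi(F)}{n|K|^{1/n}|F|^{(n-1)/n}}\geq 0.$$
Because $s\mapsto |s-\epsilon^2|$ is $1$-Lipschitz, $|\beta_{\Phi}(F)^2-\epsilon^2|\leq |b_\infty-\epsilon^2|+(b_\infty-\beta_{\Phi}(F)^2)$; substituting and writing $c:=n|K|^{1/n}|F|^{(n-1)/n}$, the desired inequality $Q(F)\leq \lim Q(E_j)$ reduces to the algebraic condition
$$(\ell-\Phi(F))\bigl(1-\epsilon_{\Phi}|K|/c\bigr)\geq 0,$$
which holds provided $c\geq \epsilon_{\Phi}|K|$.

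To both verify this last inequality and extract the stated properties of $F$, I will exploit the volume penalty. From $Q(F)\leq Q(K)\leq (n+\epsilon_{\Phi}\epsilon^2)|K|$ one gets
$$\bigl||F|-|K|\bigr|\leq \frac{Q(F)}{\Lambda}\leq \frac{(n+1)|K|}{\Lambda}<\frac{|K|}{2}$$
for $\Lambda>4n$ and $\epsilon$ small, so that $|F|\geq |K|/2$. Combined with $\epsilon_{\Phi}\leq 1$, this yields $c\geq n|K|^{1/n}(|K|/2)^{(n-1)/n}\geq \epsilon_{\Phi}|K|$ for every $n\geq 2$, closing the existence argument. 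The remaining bound $\Phi(F)\leq Q(F)\leq 2n|K|=2\Phi(K)$ is immediate from the same chain of inequalities, completing the proof of \eqref{Minimize Q Properties}.
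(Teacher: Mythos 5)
The paper states this lemma as a citation of \cite[Lemma 3.2]{Neumayer2016} and gives no proof, so the comparison is with the source's argument; your direct-method strategy --- in particular the device of converting the failure of lower semicontinuity of $E\mapsto|\beta_{\Phi}(E)^2-\epsilon^2|$ into the perimeter gap $\ell-\Phi(F)$ via the identity \eqref{gamma} and then absorbing that gap using $\epsilon_{\Phi}|K|\leq n|K|^{1/n}|F|^{(n-1)/n}$ --- is essentially the same as what is done there. The algebra is sound: $b_\infty-\beta_{\Phi}(F)^2=(\ell-\Phi(F))/c\geq 0$ follows from the continuity of $\gamma_{\Phi}$ and of the volume together with the lower semicontinuity of $\Phi$, the $1$-Lipschitz bound on $s\mapsto|s-\epsilon^2|$ is applied correctly, and the reduction of $Q(F)\leq\lim_j Q(E_j)$ to the single condition $c\geq\epsilon_{\Phi}|K|$ is right.

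As written, however, the argument is circular at one point. You establish $|F|\geq|K|/2$ --- which you need both for $c\geq\epsilon_{\Phi}|K|$ and, more basically, for $\beta_{\Phi}(F)$ to be well defined --- from the inequality $Q(F)\leq Q(K)$. But that inequality is only available once $F$ is known to be a minimizer, which is exactly what the semicontinuity step you are in the middle of is supposed to deliver. The fix is immediate and should be made explicit: run the volume estimate along the minimizing sequence instead. Since $Q(E_j)\to\inf Q\leq Q(K)\leq (n+1)|K|$, for $j$ large one has $\Lambda\big||E_j|-|K|\big|\leq Q(E_j)\leq (n+1)|K|+o(1)$, hence $\big||E_j|-|K|\big|<|K|/2$ because $\Lambda>4n$; passing to the limit with $|E_j|\to|F|$ gives $|F|\geq|K|/2$ \emph{before} any semicontinuity claim about $Q$ is invoked. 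With that reordering the existence proof closes, and your verification of \eqref{Minimize Q Properties} for an arbitrary minimizer $F$ --- now legitimately via $Q(F)\leq Q(K)$ --- is correct as written.
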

The former condition in \eqref{Minimize Q Properties} tells us that $F$ does not degenerate, whereas the latter gives us a nontrivial way to compare $\Phi(F)$ with $\Phi(K)$. This will be crucial in producing a contradiction.

We say that $E$ satisfies the following \textit{uniform density estimate} if there exists $r_0(n,K)>0$ and $0<c_0(n)<1/2$ such that for any $x\in \partial^*E$ and $r<r_0$ it holds
\begin{equation}\label{Uniform density estimate}
c_0\epsilon_{\Phi}^n\omega_n r^n \leq |B_r(x)\cap E| \leq \left(1-c_0\epsilon_{\Phi}^n\right)\omega_n r^n.
\end{equation}
Typically $L^1$ convergence is not enough to control Hausdorff convergence. However, combined with uniform density estimates one can obtain Hausdorff convergence of the boundaries, which in turn controls the Hausdorff distance. This is a standard technique; see for example \eqref{hausdorff closeness E} of Lemma \ref{Closeness}. Indeed, notably uniform density estimates help rule out cases like having long, thin tentacles which can extend arbitrarily far.

Importantly, minimizers of $Q(E)$ satisfy the uniform density estimates.
\begin{lem}[{\cite[Lemma 3.3]{Neumayer2016}}]\label{Minimize Q Uniform} Any minimizer $F$ of $Q(E)$ among $E\subset B_{R_0}$  satisfies the uniform density estimates \eqref{Uniform density estimate}. 
\end{lem}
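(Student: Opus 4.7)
My plan is to reduce the minimality of $Q$ to a quasi-minimality of $\Phi$ with a controlled error, then run the classical ODE-based density-estimate argument. I expect the main obstacle to be the absorption needed in the reduction, since the perturbation induced by the $\beta_\Phi^2$ penalty naturally scales at the same order as the perimeter.

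\emph{Step 1: quasi-minimality of $F$.} Let $G \subset B_{R_0}$ with $|F \Delta G|$ small. The minimality $Q(F) \leq Q(G)$ together with the reverse triangle inequality $\bigl||\beta_\Phi(G)^2-\epsilon^2| - |\beta_\Phi(F)^2-\epsilon^2|\bigr| \leq |\beta_\Phi(G)^2 - \beta_\Phi(F)^2|$ gives
\begin{align*}
\Phi(F) - \Phi(G) \leq \epsilon_\Phi|K|\,|\beta_\Phi(F)^2 - \beta_\Phi(G)^2| + \Lambda|F\Delta G|.
\end{align*}
Using the identity \eqref{gamma} for $\beta_\Phi^2$, the Hölder continuity of $\gamma_\Phi$ from Proposition \ref{Properties}, and the volume bound $|F| \geq |K|/2$ from Lemma \ref{Minimize Q}, I would bound
\begin{align*}
|\beta_\Phi(F)^2 - \beta_\Phi(G)^2| \leq \mu|\Phi(F) - \Phi(G)| + C_1\bigl(|F\Delta G|^{(n-1)/n} + |F\Delta G|\bigr),
\end{align*}
where $\mu = \epsilon_\Phi|K|/(n|K|^{1/n}|F|^{(n-1)/n}) \leq \epsilon_\Phi \cdot 2^{(n-1)/n}/n < 1$. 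Absorbing the $|\Phi(F)-\Phi(G)|$ term yields the quasi-minimality
\begin{align*}
\Phi(F) \leq \Phi(G) + C_2(n,K,\Lambda)\bigl(|F\Delta G|^{(n-1)/n} + |F\Delta G|\bigr).
\end{align*}

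\emph{Step 2: density ODE.} Fix $x \in \partial^* F$ and $r > 0$ small, and set $u(r) = |F \cap B_r(x)|$, $v(r) = \omega_n r^n - u(r)$. For the upper density bound, take $G = F \cup B_r(x)$, so $|F \Delta G| = v(r)$. Decomposing the perimeters on $B_r(x)$ and $\partial B_r(x) \setminus F$ and applying Step 1 with $f \leq M_\Phi$ gives
\begin{align*}
\Phi(F; B_r(x)) \leq M_\Phi v'(r) + C_2\bigl(v(r)^{(n-1)/n} + v(r)\bigr).
\end{align*}
Applying the Wulff inequality \eqref{anisotropic Isop. ineq} to $B_r(x)\setminus F$ and using $m_\Phi P \leq \Phi \leq M_\Phi P$ yields a matching lower bound
\begin{align*}
n|K|^{1/n}v(r)^{(n-1)/n} \leq (M_\Phi/m_\Phi)\,\Phi(F; B_r(x)) + M_\Phi v'(r).
\end{align*}
Combining these, and noting that for $r$ small the linear $v$-term is dominated by $v^{(n-1)/n}$, produces the ODI $v(r)^{(n-1)/n} \leq C_3(n,K)\,v'(r)$; integrating yields $v(r) \geq c_0\epsilon_\Phi^n \omega_n r^n$. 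The lower density bound follows symmetrically from $G = F \setminus B_r(x)$ and Wulff on $F \cap B_r(x)$, and the $\epsilon_\Phi^n$-dependence tracks the ratio $m_\Phi/M_\Phi$ through the iterated passages between $\Phi$ and $P$.

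\emph{Main obstacle.} The delicate point is the absorption in Step 1: Lemma \ref{Minimize Q} keeps $\mu$ bounded away from $1$, but one must further ensure $C_2$ is small enough that $n|K|^{1/n} - (M_\Phi/m_\Phi)C_2 > 0$, so the ODI in Step 2 is nondegenerate. This is precisely the interplay between the Hölder constant of $\gamma_\Phi$ and the Wulff constant, and it is where restricting to sufficiently small $r$ (shrinking $|F \Delta G|$) becomes essential.
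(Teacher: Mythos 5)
The paper offers no proof of this lemma --- it is imported verbatim as \cite[Lemma 3.3]{Neumayer2016} --- so your reconstruction can only be judged on its own terms. The route you take (reduce minimality of $Q$ to a quasi-minimality inequality for $\Phi$ via the identity \eqref{gamma}, the H\"older continuity of $\gamma_\Phi$ from Proposition \ref{Properties}, and the volume lower bound of Lemma \ref{Minimize Q}, then run the comparison/ODE argument with $G=F\cup B_r(x)$ and $G=F\setminus B_r(x)$) is the standard and, I believe, the intended one, and the absorption of the $|\Phi(F)-\Phi(G)|$ term with $\mu<1$ is correctly set up.

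The genuine gap is in how you propose to close the differential inequality. The error term $C_2|F\Delta G|^{(n-1)/n}$ produced by the H\"older continuity of $\gamma_\Phi$ scales \emph{exactly} like $v(r)^{(n-1)/n}\sim r^{n-1}$, i.e.\ like the perimeter itself, so ``restricting to sufficiently small $r$'' buys you nothing for this term (it only disposes of the linear terms $\Lambda|F\Delta G|$ and the volume-correction terms). If $(M_\Phi/m_\Phi)C_2\geq n|K|^{1/n}$ the ODI is vacuous for all $r$, and density estimates for quasi-minimizers with an additive error of order $r^{n-1}$ and large constant genuinely fail. What actually saves the argument is an explicit computation of $C_2$: the H\"older constant of $\gamma_\Phi$ is $n|K|/(n-1)$, the prefactor coming from $Q$ is $\epsilon_\Phi|K|\cdot\tfrac{n-1}{n|K|^{1/n}|F|^{(n-1)/n}}$ with $|F|\geq|K|/2$, and after multiplying by $M_\Phi/m_\Phi=\epsilon_\Phi^{-1}$ the condition $n|K|^{1/n}>(M_\Phi/m_\Phi)C_2$ reduces to an inequality of the form $|K|^{(n-1)/n}\lesssim n$. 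This is not scale-invariant, so it must be arranged --- either by normalizing $K$ (legitimate, since the main theorem is scale-invariant and the constants $r_0,c_0$ in \eqref{Uniform density estimate} may depend on $K$) or by the extra factor $1/8$ that appears in the functional $Q_j$ actually used in the proof of Theorem \ref{Main Result}. Your write-up asserts the needed smallness of $C_2$ but never establishes it, and the mechanism you name for establishing it is the wrong one; this is the one step that must be carried out quantitatively for the lemma to hold.
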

\begin{rem} In \eqref{Uniform density estimate} we require that $r_0$ depends on $n$ and $K$. In actuality, in Lemma \ref{Minimize Q Uniform} it also depends on $\Lambda$. However, we will take $\Lambda$ to be some fixed constant $\Lambda>4n$, so this will not matter.
\end{rem}
Also, the uniform density estimates allow us to upgrade the H\"{o}lder continuity from Proposition \ref{Properties} to a kind of Lipschitz continuity; see Proposition \ref{Vanishing Gradient}. In order to achieve this, we need the following lemma from \cite{FZ2019}. 
\begin{lem}[{\cite[Lemma 2.3]{FZ2019}}]\label{Modulus continuity} There exists a modulus of continuity $\omega:(0,1)\to (0,\infty)$ such that if $|{\bf a}|+|{\bf a'}|<1$ then
$$|\Phi(K^{{\bf a}})-\Phi(K^{{\bf a'}})| \leq \omega(|{\bf a}|+|{\bf a'}|)|{\bf a}-{\bf a'}|.$$
\end{lem}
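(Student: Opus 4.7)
My strategy is to exploit the polytope structure of $K^{\bf a}$ to reduce the estimate to a controlled computation on finitely many \emph{combinatorial cells} of the parameter space $\{{\bf a}\in\mathbb{R}^N : |{\bf a}|<1\}$. Since $K^{\bf a}$ is a polytope with facet $F_i^{\bf a}\subset\{\langle x,\nu_i\rangle = d_i(1+a_i)\}$ (possibly empty) carrying outer normal $\nu_i$, one has
\begin{equation*}
\Phi(K^{\bf a}) = \sum_{i\in\mathscr{I}} f(\nu_i)\,\mathcal{H}^{n-1}(F_i^{\bf a}),
\end{equation*}
and the hypothesis $|{\bf a}|+|{\bf a'}|<1$ ensures $d_i(1+a_i)>0$, so that the origin lies in the interior of $K^{\bf a}$ and the support heights $h_i=d_i(1+a_i)$ are affine in $\bf a$.

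First I would partition $\{|{\bf a}|<1\}$ into finitely many open combinatorial cells on which the face lattice of $K^{\bf a}$ is constant; finiteness follows because the combinatorial type is determined by a subset of $\mathscr{I}$ together with its adjacency structure, of which there are finitely many. On any such cell every vertex of $K^{\bf a}$ is the unique solution of a fixed $n\times n$ linear system whose right-hand side is affine in $\bf a$, and hence is itself affine in $\bf a$. Triangulating each facet using these vertices and applying the simplex volume formula shows that $\mathcal{H}^{n-1}(F_i^{\bf a})$ is a polynomial in $\bf a$ of degree at most $n-1$ on the cell. Consequently $\Phi(K^{\bf a})$ is piecewise polynomial, and continuous across cell boundaries since the collapsing facet contributes vanishing area there.

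To estimate the difference I would parametrize the segment $\gamma(t)=(1-t){\bf a}+t{\bf a'}$, $t\in[0,1]$, which by convexity lies in $\{|{\bf c}|\leq\max(|{\bf a}|,|{\bf a'}|)<1\}$. This segment meets only finitely many cells, cutting $[0,1]$ into subintervals on which $t\mapsto\Phi(K^{\gamma(t)})$ is polynomial and hence differentiable, with
\begin{equation*}
\left|\frac{d}{dt}\Phi(K^{\gamma(t)})\right| \leq |{\bf a}-{\bf a'}|\sum_{i\in\mathscr{I}} f(\nu_i)\bigl|\nabla_{\bf a}\mathcal{H}^{n-1}(F_i^{\bf a})\bigr|_{{\bf a}=\gamma(t)}.
\end{equation*}
Since each $\mathcal{H}^{n-1}(F_i^{\bf a})$ is a polynomial in $\bf a$ of degree at most $n-1$ with coefficients depending only on $K$, its gradient is bounded on the compact set $\{|{\bf c}|\leq |{\bf a}|+|{\bf a'}|\}$ by a quantity $L(n,K,|{\bf a}|+|{\bf a'}|)$, uniformly over the finite collection of cells. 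Integrating over $[0,1]$ and using continuity of $\Phi(K^{\gamma(\cdot)})$ at the finitely many transitions yields
\begin{equation*}
|\Phi(K^{\bf a})-\Phi(K^{\bf a'})| \leq L(n,K,|{\bf a}|+|{\bf a'}|)\,|{\bf a}-{\bf a'}|,
\end{equation*}
so setting $\omega(s):=L(n,K,s)$ gives the required modulus of continuity.

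The main obstacle is the explicit geometric control on $\nabla_{\bf a}\mathcal{H}^{n-1}(F_i^{\bf a})$ across the combinatorial transitions. When a facet shrinks to a point as $\bf a$ crosses a cell boundary, the polynomial expressions for the surrounding facet areas change, and one must verify that the resulting derivative bounds from adjacent cells do not blow up. This is ultimately handled by the continuity of $\mathcal{H}^{n-1}(F_i^{\bf a})$ up to each cell closure together with the finiteness of the cell decomposition, which together yield the uniform-in-$\bf a$ bound needed to define $\omega$.
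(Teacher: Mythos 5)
You should first be aware that the paper offers no proof of this lemma: it is imported verbatim from \cite[Lemma 2.3]{FZ2019}, so there is no internal argument to compare against. Your strategy --- stratify the parameter ball into finitely many combinatorial cells, note that on each cell the vertices are affine and the facet areas polynomial in ${\bf a}$, and integrate a derivative bound along the segment joining ${\bf a}$ to ${\bf a'}$ --- is a legitimate route to the stated estimate. (One clarifying remark: despite the name, $\omega$ here cannot tend to $0$ at $0$, since $\partial_{a_i}\Phi(K^{\bf a})\big|_{{\bf a}={\bf 0}}=(n-1)f(\nu_i)\mathcal{H}^{n-1}(F_i)\neq 0$; the lemma is only asserting a locally bounded Lipschitz constant, which is exactly what your argument targets.)

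The one step that does not hold as written is your closing justification of the uniform derivative bound: continuity of $\mathcal{H}^{n-1}(F_i^{\bf a})$ up to the closure of each cell does \emph{not} control $\nabla_{\bf a}\mathcal{H}^{n-1}(F_i^{\bf a})$ there --- $x\mapsto\sqrt{x}$ is continuous up to $x=0$ with unbounded derivative, so "continuity up to the cell closure plus finiteness" is a non sequitur. What actually saves you is the structure you already set up: there are finitely many labelled face lattices, hence finitely many polynomials $p_{i,\tau}$ representing the facet areas, and each polynomial is smooth on all of $\mathbb{R}^N$, so its gradient is bounded on the compact ball $\{|{\bf c}|\le|{\bf a}|+|{\bf a'}|\}$; the maximum over this finite family is your $L$. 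Say that instead. Two smaller repairs are also needed. First, the open cells do not cover the ball, and the segment $\gamma$ may lie inside a wall; work with the closed cells (the polynomial identities extend to the closures by continuity, and $[0,1]=\bigcup_k\gamma^{-1}(\bar C_k)$ is a finite union of closed intervals by semialgebraicity), or perturb the endpoints. Second, the $(n-1)$-volume of a simplex with vertices affine in ${\bf a}$ is $\frac{1}{(n-1)!}\sqrt{\det G}$ with $G$ the Gram matrix, which is not a polynomial; you obtain a polynomial only because the simplex lies in a hyperplane with the fixed normal $\nu_i$, so its area equals $\frac{1}{(n-1)!}\big|\det(v_1-v_0,\dots,v_{n-1}-v_0,\nu_i)\big|$ with locally constant sign. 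Finally, note that for \emph{parallel} polytopes the paper's own Proposition \ref{Symmetric Difference Perturbed Facet Bound} already gives $|\mathcal{H}^{n-1}(F_i^{\bf a})-\mathcal{H}^{n-1}(F_i^{\bf a'})|\le CM_{\Phi}^{n-1}\|{\bf a}-{\bf a'}\|_{\ell^{\infty}}$, which sums directly to the desired Lipschitz bound with $\omega$ constant; the cell decomposition is genuinely needed only because the lemma allows facets to disappear.
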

Finally we note the following useful property of $\mathcal{C}_{\text{par}}(K)$.
\begin{rem}\label{Weak Equivalence} As $\mathcal{C}_{\text{par}}(K)$ is parametrized by ${\bf a}\in \mathbb{R}^N$ it is finite dimensional. We expect the metrics induced by the $L^1$ distance, Hausdorff distance, and $\|{\bf a}\|_{\ell^{\infty}}$ are all equivalent up to a constant $C(n,K)>0$; indeed, this is the content of Lemma \ref{Equivalence} below. Explicit computations show that, in fact, these constants can be chosen to depend only on $n, N, \epsilon_{\Phi}, \csc(\theta_{ij})$ where $\cos(\theta_{ij})=-\la \nu_i,\nu_j\ra$. As in \cite{FMP}, it is likely that the dependence on $\epsilon_{\Phi}$ can be removed by an application of John's lemma.

One may hope to use Theorem \ref{Main Result} to re-prove the uniformly elliptic result (see \cite[Theorem 1.3]{Neumayer2016}) by approximating a smooth Wulff shape $K$ with polytopes $K_j$, thereby removing the dependence on $\|\nabla^2 f\|_{C^0(\partial K)}$ of the universal constant. However, this necessarily means both $\csc(\theta_{ij})\to \infty$ and $N\to \infty$. For this reason, we do not include the extra work to explicitly compute these constants in terms of these parameters.
\end{rem}

\section{Parallel Deformations of Polytopes}\label{Control Parallel Polytopes}

\subsection{Comparison of parallel facets} Given $K^{\bf a},K^{\bf a'}\in \mathcal{C}_{\text{par}}(K)$, since they are parallel they have the same set of unit normals $\{\nu_i\}_{i\in \mathscr{I}}$. Let $F_i^{\bf a}, F_i^{\bf a'}$ be the facets of $K^{\bf a}, K^{\bf a'}$ respectively with outer unit normal $\nu_i$. Formally, $F_i^{\bf a} = \partial K^{\bf a} \cap \Sigma_{\nu_i}^{1+{\bf a_{\it{i}}}}$, where
\begin{equation}
\Sigma_{\nu_i}^r := \{\la x,\nu_i \ra = r{\bf d_{\it{i}}}\}.
\end{equation}
Notice that $\Sigma_{\nu_i}^{1+{\bf a_{\it{i}}}}$ is the supporting hyperplane of $K^{\bf a}$ in the direction $\nu_i \in S^{n-1}$. 

One can ask how $F_i^{\bf a}$ and $F_i^{\bf a'}$ are related in terms of, say, $\|{\bf a}-{\bf a'}\|_{\ell^{\infty}}$. Indeed as in Lemma \ref{Equivalence} we see that if $\{K^{\bf a_j}\}_{j=1}^{\infty}\subset \mathcal{C}_{\text{par}}(K)$ is such that ${\bf a_j}\xto{\ell^{\infty}} {\bf a}$ then $d_H(K^{\bf a_j},K^{\bf a})\to 0$. This is a strong condition, and suggests that $F_i^{\bf a}$ and $F_i^{\bf a_j}$ can be compared for sufficiently large $j$. In fact, we will show that, up to a suitable translation, $\mathcal{H}^{n-1}(F_i^{\bf a_j}\Delta F_i^{\bf a})\to 0$ and quantify the rate of convergence. More precisely we prove:
\begin{prop}\label{Symmetric Difference Perturbed Facet Bound} There exists $C(n,K)>0$ such that for any $K^{\bf a},K^{\bf a'}\in \mathcal{C}_{\text{par}}(K)$ and any $i\in \mathscr{I}$,
\begin{equation}
\mathcal{H}^{n-1}(F_i^{\bf a}\Delta (F_i^{\bf a'}+u_i))\leq CM_{\Phi}^{n-1}\|{\bf a}-{\bf a'}\|_{\ell^{\infty}}
\end{equation}
where $u_i = {\bf d_{\it{i}}}({\bf a_{\it{i}}}-{\bf a_{\it{i}}'})\nu_i$.
\end{prop}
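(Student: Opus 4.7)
The plan is to bring both facets into a common hyperplane, express them there as convex polytopes cut out by the same list of normals, and bound the symmetric difference as a union of thin slabs in that hyperplane.

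By the definition of $u_i$, translation by $u_i$ sends $\Sigma_{\nu_i}^{1+{\bf a_{\it{i}}'}}$ onto $\Sigma_{\nu_i}^{1+{\bf a_{\it{i}}}}$, so both $F_i^{\bf a}$ and $F_i^{\bf a'}+u_i$ sit inside the common hyperplane $H:=\Sigma_{\nu_i}^{1+{\bf a_{\it{i}}}}$. Unpacking the half-space inequalities defining $K^{\bf a}$ and $K^{\bf a'}$ gives
\begin{align*}
F_i^{\bf a} &= \{y \in H : \la y,\nu_j\ra \le (1+{\bf a_{\it{j}}}){\bf d_{\it{j}}} \text{ for all } j\ne i\},\\
F_i^{\bf a'}+u_i &= \{y \in H : \la y,\nu_j\ra \le (1+{\bf a_{\it{j}}'}){\bf d_{\it{j}}}+\la u_i,\nu_j\ra \text{ for all } j\ne i\},
\end{align*}
so inside $H$ both are polytopes cut out by the same list of normals $\{\nu_j\}_{j\ne i}$ with offsets differing by
$$c_j := {\bf d_{\it{j}}}({\bf a_{\it{j}}}-{\bf a_{\it{j}}'}) - {\bf d_{\it{i}}}({\bf a_{\it{i}}}-{\bf a_{\it{i}}'})\la \nu_i,\nu_j\ra,$$
and $|c_j|\le 2M_\Phi\|{\bf a}-{\bf a'}\|_{\ell^\infty}$ follows from ${\bf d_{\it{k}}}\le M_\Phi$ and $|\la \nu_i,\nu_j\ra|\le 1$.

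I would then decompose $F_i^{\bf a}\Delta (F_i^{\bf a'}+u_i)\subset \bigcup_{j\ne i}S_j$, where $S_j\subset H$ is the slab of $y\in H$ with $\la y,\nu_j\ra$ lying between the two offsets above; this containment holds because any point in the symmetric difference must fail exactly one $j$-constraint of exactly one of the two polytopes. Decomposing $\nu_j$ along $\nu_i$ and $\nu_i^\perp$, the component in $\nu_i^\perp$ has length $\sin\theta_{ij}=\sqrt{1-\la \nu_i,\nu_j\ra^2}$, so $S_j$ has width $|c_j|/\sin\theta_{ij}$ measured inside $H$. When $\nu_j=-\nu_i$ the relevant inequality is constant on $H$, hence trivially always (or never) satisfied, and contributes nothing; for the remaining finitely many $j$, $\sin\theta_{ij}$ is bounded below by a constant depending only on $K$. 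Combining with the inclusion $F_i^{\bf a},F_i^{\bf a'}+u_i\subset B_{C(K)M_\Phi}$ (from $\|{\bf a}\|_{\ell^\infty},\|{\bf a'}\|_{\ell^\infty}<1$ and the angle bounds on $K$, which control how far vertices of $K^{\bf a}$ can lie from the origin), intersecting a slab of width $|c_j|/\sin\theta_{ij}$ with such a ball yields
$$\mathcal{H}^{n-1}(S_j\cap F_i^{\bf a}) \le C(n,K) M_\Phi^{n-2}\cdot\frac{|c_j|}{\sin\theta_{ij}},$$
and symmetrically for $F_i^{\bf a'}+u_i$. Summing over $j\ne i$ absorbs the $\csc\theta_{ij}$ factors and the cardinality of $\mathscr{I}$ into $C(n,K)$, while the bound on $|c_j|$ produces the final factor $M_\Phi^{n-1}\|{\bf a}-{\bf a'}\|_{\ell^\infty}$.

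The main obstacle is not conceptual but bookkeeping: translating the defining inequalities of $F_i^{\bf a'}$ under $u_i$ and recognizing that the translated polytope lives in $H$ with the same list of normals as $F_i^{\bf a}$. Once that algebra is in place, the slab-meets-ball area bound is elementary and has the pleasant feature of not requiring us to track which of the $\nu_j$ actually support $(n-2)$-faces of $F_i^{\bf a}$ as a facet of $K^{\bf a}$.
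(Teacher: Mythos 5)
Your proof is correct, and it takes a genuinely different and more direct route than the paper. The paper splits the general perturbation into $N$ one--directional perturbations and, for each, handles the facets $F_j$ with $j\neq i$ by a slab--meets--box estimate (Lemma \ref{Symmetric Difference Perturbed Other Facet Bound}) and then transfers that control to the perturbed facet $F_i$ itself via a divergence--theorem/duality argument with test functions constant along $\nu_i$ (Lemma \ref{Symmetric Difference Perturbed Parallel Facet Bound}), finally telescoping over the $N$ directions. You instead observe that translating by $u_i$ places both copies of the $i$-th facet in the common hyperplane $H=\Sigma_{\nu_i}^{1+{\bf a_{\it{i}}}}$, where they are convex polytopes cut out by the same normals $\{\nu_j\}_{j\neq i}$ with offsets differing by $c_j={\bf d_{\it{j}}}({\bf a_{\it{j}}}-{\bf a_{\it{j}}'})-{\bf d_{\it{i}}}({\bf a_{\it{i}}}-{\bf a_{\it{i}}'})\la\nu_i,\nu_j\ra$; the symmetric difference then sits inside a union of slabs of width $|c_j|/\sin\theta_{ij}$ in $H$, and the same elementary slab--meets--ball area bound finishes in one step, with the degenerate case $\nu_j=-\nu_i$ correctly discarded and $\csc\theta_{ij}$, $N$ absorbed into $C(n,K)$ exactly as the paper does. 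Your version avoids the Riesz-representation/supremum formulation, the divergence theorem, and the telescoping entirely, which is a real simplification; what the paper's longer route buys is the finer intermediate information (e.g.\ that facets not neighboring $F_i$ are literally unchanged under a one--directional perturbation, and the transfer lemma relating the perturbed facet to its neighbors), which is not needed for the proposition itself. One cosmetic slip: a point of the symmetric difference fails \emph{at least} one constraint of exactly one of the two polytopes, not ``exactly one'' constraint; the containment in $\bigcup_{j\neq i}S_j$ is unaffected.
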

In particular, applying Proposition \ref{Symmetric Difference Perturbed Facet Bound} with $K$ and $K^{\bf a}$ immediately yields
\begin{cor}[Asymptotics of Parallel Facet Areas]\label{Asymptotics of Parallel Facet Areas} There exists  $C(n,K)>0$ such that for any $K^{\bf a}\in \mathcal{C}_{\text{par}}(K)$,
\begin{equation}\label{Facet measure diff O}
|\mathcal{H}^{n-1}(F_i^{\bf a})-\mathcal{H}^{n-1}(F_i)| \leq CM_{\Phi}^{n-1}\|{\bf a}\|_{\ell^{\infty}}
\end{equation}
that is $|\mathcal{H}^{n-1}(F_i^{\bf a})-\mathcal{H}^{n-1}(F_i)| = O(\|{\bf a}\|_{\ell^{\infty}}).$
\end{cor}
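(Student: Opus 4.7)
The plan is to realize both facets as subsets of the same hyperplane after translation by $u_i$, express each as an intersection of half-spaces in that hyperplane, and then bound the symmetric difference by a finite union of thin slabs whose $\mathcal{H}^{n-1}$-measure decays linearly in $\|{\bf a}-{\bf a'}\|_{\ell^\infty}$.

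First I would observe that $u_i={\bf d_{\it{i}}}({\bf a_{\it{i}}}-{\bf a_{\it{i}}'})\nu_i$ shifts the supporting hyperplane $\Sigma_{\nu_i}^{1+{\bf a_{\it{i}}'}}$ of $K^{\bf a'}$ onto $\Sigma_{\nu_i}^{1+{\bf a_{\it{i}}}}=:H$, so both $F_i^{\bf a}$ and $F_i^{\bf a'}+u_i$ lie in $H$. Intersecting the half-space descriptions of $K^{\bf a}$ and $K^{\bf a'}+u_i$ with $H$ yields
\begin{align*}
F_i^{\bf a} &= H\cap\bigcap_{j\ne i}\{x:\langle x,\nu_j\rangle\leq c_j\},\\
F_i^{\bf a'}+u_i &= H\cap\bigcap_{j\ne i}\{x:\langle x,\nu_j\rangle\leq c_j'\},
\end{align*}
where $c_j:={\bf d_{\it{j}}}(1+{\bf a_{\it{j}}})$ and $c_j':={\bf d_{\it{j}}}(1+{\bf a_{\it{j}}'})+{\bf d_{\it{i}}}({\bf a_{\it{i}}}-{\bf a_{\it{i}}'})\langle\nu_i,\nu_j\rangle$, so $|c_j-c_j'|\leq 2M_\Phi\|{\bf a}-{\bf a'}\|_{\ell^\infty}$. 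Any point in the symmetric difference must violate exactly one of the two $j$-th constraints, so
\[
F_i^{\bf a}\Delta(F_i^{\bf a'}+u_i)\ \subset\ \bigcup_{j\ne i} (H\cap S_j), \qquad S_j:=\{x:\min(c_j,c_j')<\langle x,\nu_j\rangle\leq\max(c_j,c_j')\}.
\]

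Next, to bound $\mathcal{H}^{n-1}(H\cap S_j)$, I would decompose $\nu_j=\cos\alpha_{ij}\,\nu_i+\sin\alpha_{ij}\,\tilde{\nu}_j$ with $\tilde{\nu}_j\in H$ a unit vector; since $\nu_i\ne\pm\nu_j$, $\sin\alpha_{ij}\ne 0$. A short computation shows $H\cap S_j$ is a slab inside the $(n-1)$-plane $H$ of width $|c_j-c_j'|/\sin\theta_{ij}$ (using $\sin\alpha_{ij}=\sin\theta_{ij}$). Because $\|{\bf a}\|_{\ell^\infty},\|{\bf a'}\|_{\ell^\infty}<1$, both $K^{\bf a}$ and $K^{\bf a'}+u_i$ have diameter at most $CM_\Phi$, so the relevant facets fit inside a ball of radius $CM_\Phi$ in $H$; a slab of width $w$ inside such a ball has $\mathcal{H}^{n-1}$-measure at most $C(n)\,w\,M_\Phi^{n-2}$. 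Combining,
\[
\mathcal{H}^{n-1}(H\cap S_j)\leq \frac{2C(n)\,M_\Phi^{n-1}\|{\bf a}-{\bf a'}\|_{\ell^\infty}}{\sin\theta_{ij}},
\]
and summing over the $N-1$ indices $j\ne i$, using that $\sin\theta_{ij}$ is bounded below by a positive constant depending only on $K$, yields the desired bound $C(n,K)M_\Phi^{n-1}\|{\bf a}-{\bf a'}\|_{\ell^\infty}$.

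The main technical point is the slab-width estimate within $H$, which brings in the $\csc\theta_{ij}$ factor; this is harmless because the facet normals of $K$ form a fixed finite set with pairwise distinct directions, but it does explain why the constant must depend on $K$ (and in particular on the angular geometry of its facets) rather than being universal.
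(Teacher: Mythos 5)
Your argument is correct, but it takes a genuinely different and more direct route than the paper. The paper obtains this corollary as a one-line consequence of Proposition \ref{Symmetric Difference Perturbed Facet Bound} (take ${\bf a'}={\bf 0}$ and use $|\mathcal{H}^{n-1}(F_i^{\bf a})-\mathcal{H}^{n-1}(F_i)|\leq \mathcal{H}^{n-1}(F_i^{\bf a}\Delta(F_i+u_i))$), and that proposition is in turn proved by splitting a general perturbation into $N$ one-directional perturbations, handling the facet normal to the perturbed direction via a divergence-theorem/duality argument (Lemma \ref{Symmetric Difference Perturbed Parallel Facet Bound}, which transfers the estimate to the other facets through $\int_{F_i^{\bf a}}\varphi = \sum_{j\neq i}\cos(\theta_{ij})\int_{F_j^{\bf a}}\varphi$), and handling the remaining facets by a slab containment (Lemma \ref{Symmetric Difference Perturbed Other Facet Bound}). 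You instead prove the full symmetric-difference bound in one pass: after translating by $u_i$ both facets live in the same hyperplane $H$, are cut out there by the half-spaces $\{\la x,\nu_j\ra\leq c_j\}$ and $\{\la x,\nu_j\ra\leq c_j'\}$ with $|c_j-c_j'|\leq 2M_{\Phi}\|{\bf a}-{\bf a'}\|_{\ell^{\infty}}$, and the symmetric difference is covered by the slabs $H\cap S_j$, each of width $|c_j-c_j'|\csc\theta_{ij}$ inside a ball of radius $O(M_{\Phi})$. This avoids both the duality lemma and the telescoping, at the cost of nothing; the $\csc\theta_{ij}$ slab estimate is the shared core of both proofs (it is essentially the paper's Lemma \ref{Symmetric Difference Perturbed Other Facet Bound}).

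One point to tighten: you assert $\sin\alpha_{ij}\neq 0$ "since $\nu_i\neq\pm\nu_j$", but antipodal normals $\nu_j=-\nu_i$ do occur (e.g.\ for a cube), and there the slab $S_j$ is not transverse to $H$. This is harmless rather than fatal: when $\nu_j=-\nu_i$ the function $\la x,\nu_j\ra$ is constant on $H$, equal to $-{\bf d_{\it{i}}}(1+{\bf a_{\it{i}}})$, and since both facets are nonempty (parallelness) this constant satisfies both the $c_j$ and $c_j'$ constraints, so $H\cap S_j=\emptyset$ and the index $j$ contributes nothing to the union. You should say this explicitly, and also record the trivial final step passing from the symmetric-difference bound to the difference of measures.
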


We start by showing that the class $\mathcal{C}_{\text{par}}(K)$ is non-empty for $\|{\bf a}\|_{\ell^{\infty}}$ sufficiently small.
\begin{lem}\label{Parallel Polytopes Exist} There exists $a_0(n,K)>0$ such that if $\|{\bf a}\|_{\ell^{\infty}}\leq a_0$ and $K^{\bf a}$ is the Wulff shape associated to the gauge function 
\begin{equation}
f_*^{\bf a}(x) = \max_{i\in \mathscr{I}}\frac{\la x,\nu_i \ra}{{\bf d_{\it{i}}}(1+{\bf a_{\it{i}}})},
\end{equation}
then $K^{\bf a}$ is parallel to $K$.
\end{lem}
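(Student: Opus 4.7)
The plan is to show that each of the $N$ hyperplanes defining $K^{\bf a}$ actively contributes a facet of positive $\mathcal{H}^{n-1}$-measure, so that $K^{\bf a}$ has exactly $N$ facets with the same set of normals $\{\nu_i\}_{i\in \mathscr{I}}$ as $K$. Since $K^{\bf a}$ is the intersection of at most $N$ distinct half-spaces, it has at most $N$ facets; the task is therefore to rule out the degenerate situation in which one of these half-spaces becomes redundant for small ${\bf a}$.

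First, for each $i\in \mathscr{I}$ choose a point $x_i \in \operatorname{relint}(F_i)$. Then $\la x_i, \nu_i\ra = {\bf d_{\it{i}}}$ and, because $x_i$ is in the relative interior of $F_i$, there exists $\eta>0$ (independent of $i$, after taking the minimum over the finitely many indices) such that
\begin{equation}
\la x_i, \nu_j\ra \leq {\bf d_{\it{j}}}-\eta \quad \text{for all } j\neq i,
\end{equation}
and moreover an $(n-1)$-dimensional ball $B_i \subset \Sigma_{\nu_i}^1$ of some positive radius $\rho>0$ centered at $x_i$ also satisfies these strict inequalities with gap at least $\eta/2$.

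Next I would translate each such witness onto the perturbed supporting hyperplane by setting $x_i^{\bf a} := x_i + {\bf a_{\it{i}}}{\bf d_{\it{i}}}\nu_i$, which lies in $\Sigma_{\nu_i}^{1+{\bf a_{\it{i}}}}$. For $j\neq i$,
\begin{equation}
\la x_i^{\bf a}, \nu_j\ra = \la x_i,\nu_j\ra + {\bf a_{\it{i}}}{\bf d_{\it{i}}}\la \nu_i,\nu_j\ra \leq {\bf d_{\it{j}}}-\eta + \|{\bf a}\|_{\ell^{\infty}}\max_{k}{\bf d_{\it{k}}},
\end{equation}
while ${\bf d_{\it{j}}}(1+{\bf a_{\it{j}}})\geq {\bf d_{\it{j}}} - \|{\bf a}\|_{\ell^{\infty}}\max_k {\bf d_{\it{k}}}$. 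Thus choosing
\begin{equation}
a_0 := \frac{\eta}{4\max_{k\in \mathscr{I}} {\bf d_{\it{k}}}}
\end{equation}
ensures $\la x_i^{\bf a}, \nu_j\ra < {\bf d_{\it{j}}}(1+{\bf a_{\it{j}}})$ for all $j\neq i$ whenever $\|{\bf a}\|_{\ell^{\infty}}\leq a_0$. By the same estimate applied to the translated ball $B_i + {\bf a_{\it{i}}}{\bf d_{\it{i}}}\nu_i$ (shrinking $a_0$ if necessary so that the constant $\eta/2$ replaces $\eta$ above), a full $(n-1)$-dimensional ball sits inside $\Sigma_{\nu_i}^{1+{\bf a_{\it{i}}}} \cap \overline{K^{\bf a}}$.

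This shows that for $\|{\bf a}\|_{\ell^{\infty}}\leq a_0$ the candidate facet $F_i^{\bf a} := \overline{K^{\bf a}} \cap \Sigma_{\nu_i}^{1+{\bf a_{\it{i}}}}$ has positive $\mathcal{H}^{n-1}$-measure, so the $i$-th half-space in the definition of $K^{\bf a}$ is genuinely active. Applied for every $i\in \mathscr{I}$ this yields $N$ distinct facets of $K^{\bf a}$; since the defining intersection can produce at most $N$ facets, $K^{\bf a}$ has exactly the set of normals $\{\nu_i\}_{i=1}^N$ and is therefore parallel to $K$. The argument is essentially combinatorial plus continuity, so there is no real obstacle; the only point requiring care is to choose a single $a_0$ that works uniformly in $i$, which is handled by minimizing the separation gap $\eta$ over the finite index set $\mathscr{I}$.
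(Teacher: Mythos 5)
Your argument is correct, and it is genuinely different from (and more elementary than) the paper's. The paper proves the lemma by establishing continuity of the facet-area maps ${\bf a}\mapsto \mathcal{H}^{n-1}(F_j^{\bf a})$: for a perturbation of a single defining hyperplane it shows the areas of the unperturbed facets are absolutely continuous via a Fubini/slicing computation, then recovers continuity of the perturbed facet's area from the volume identity $|K^{\bf a}| = \frac{1}{n}\sum_j {\bf d_{\it{j}}}(1+{\bf a_{\it{j}}})\mathcal{H}^{n-1}(F_j^{\bf a})$, and finally composes $N$ one-directional perturbations to handle a general ${\bf a}$, obtaining $\mathcal{H}^{n-1}(F_j^{\bf a})\geq 2^{-N}\mathcal{H}^{n-1}(F_j)>0$. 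You instead exhibit an explicit witness: a relatively interior point $x_i\in F_i$ satisfies $\la x_i,\nu_j\ra \leq {\bf d_{\it{j}}}-\eta$ for $j\neq i$ with a uniform gap $\eta(K)>0$ (attainable since there are finitely many facets, e.g.\ by taking $x_i$ to be the barycenter of $F_i$), and the translate $x_i+{\bf a_{\it{i}}}{\bf d_{\it{i}}}\nu_i$, together with a small $(n-1)$-ball around it, survives all the perturbed constraints once $\|{\bf a}\|_{\ell^{\infty}}\leq \eta/(4\max_k {\bf d_{\it{k}}})$. Your estimates check out (the key inequality $2\|{\bf a}\|_{\ell^{\infty}}\max_k{\bf d_{\it{k}}}<\eta/2$ holds with your choice of $a_0$ after the minor shrinking you flag), and the surviving ball shows each $F_i^{\bf a}$ has positive $\mathcal{H}^{n-1}$-measure, which is exactly the definition of parallelism used in the paper. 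What your route buys is an explicit, quantitative $a_0$ and a lower bound $\mathcal{H}^{n-1}(F_i^{\bf a})\geq \omega_{n-1}\rho^{n-1}$ with no iteration over directions; what the paper's route buys is the continuity/absolute-continuity information about facet areas as functions of ${\bf a}$, which is in the same spirit as the slicing estimates reused later (e.g.\ in Lemma 3.5 and Lemma 3.6). One small point of care in your write-up: you should note that a relative interior point of $F_i$ indeed satisfies the \emph{strict} inequalities $\la x_i,\nu_j\ra<{\bf d_{\it{j}}}$ for all $j\neq i$ because any point of $\partial K$ meeting $\Sigma_{\nu_j}^1$ lies on $F_j$, and $F_i\cap F_j$ is contained in the relative boundary of $F_i$; this is true and worth one sentence.
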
 
\begin{proof}
The idea is to show we can find a neighborhood $U$ of ${\bf 0}$ such that for any ${\bf a}\in U$ and $i\in \mathscr{I}$ we have $\mathcal{H}^{n-1}(F_i^{\bf a})>0$. To do this we first prove the map ${\bf a}\mapsto \mathcal{H}^{n-1}(F_i^{\bf a})$ for fixed $i\in \mathscr{I}$ is continuous. Note that Corollary \ref{Asymptotics of Parallel Facet Areas} determines a precise modulus of continuity, but this requires the assumption that $K^{\bf a}$ is parallel to $K$.

Fix $i\in \mathscr{I}$ and consider ${\bf a}$ such that $|{\bf a_{\it{i}}}|<1$ and ${\bf a_{\it{j}}}=0$ for $j\neq i$. Throughout we denote $F_j^{\bf a}$ instead as $F_j^{\bf a_{\it{i}}}$ to emphasize that ${\bf a}$ depends only on its $i$-th component ${\bf a_{\it{i}}}$. First let $j\neq i$ and observe that $F_j^{\bf a_{\it{i}}}\subset F_j^{\bf a_{\it{i}}'}$ for ${\bf a_{\it{i}}}\leq {\bf a_{\it{i}}'}$. Since $|{\bf a_{\it{i}}}|<1$ we see that $F_j^{1}$ is maximal, in the sense that $F_j^{\bf a_{\it{i}}}\subset F_j^1$ for all $|{\bf a_{\it{i}}}|<1$. Furthermore, it follows that 
\begin{equation}
F_j^{\bf a_{\it{i}}} = F_j^{1} \cap \{\la x,\nu_i \ra \leq {\bf d_{\it{i}}}(1+{\bf a_{\it{i}}})\}.
\end{equation} 
In particular, for $s\in [{\bf a_{\it{i}}},{\bf a_{\it{i}}'}]$ the slices $(F_j^{\bf a_{\it{i}}'}\setminus F_j^{\bf a_{\it{i}}})\cap \Sigma_{\nu_i}^{1+s}$ and $F_j^{1} \cap \Sigma_{\nu_i}^{1+s}$ coincide. Consequently by Fubini's theorem,
\begin{align}
\mathcal{H}^{n-1}(F_j^{\bf a_{\it{i}}'})-\mathcal{H}^{n-1}(F_j^{\bf a_{\it{i}}}) &= \mathcal{H}^{n-1}(F_j^{\bf a_{\it{i}}'}\setminus F_j^{\bf a_{\it{i}}})\\
&= \int_{\bf a_{\it{i}}}^{{\bf a_{\it{i}}'}} \mathcal{H}^{n-2}((F_j^{\bf a_{\it{i}}'}\setminus F_j^{\bf a_{\it{i}}})\cap \Sigma_{\nu_i}^{1+s}) \ ds = \int_{\bf a_{\it{i}}}^{{\bf a_{\it{i}}'}} \mathcal{H}^{n-2}(F_j^{1}\cap \Sigma_{\nu_i}^{1+s}) \ ds
\end{align}
and $\mathcal{H}^{n-1}(F_j^{\bf a_{\it{i}}})$ is absolutely continuous in ${\bf a_{\it{i}}}$ by choosing ${\bf a_{\it{i}}'}=0$ and reversing roles if necessary.

It remains to show that $\mathcal{H}^{n-1}(F_i^{\bf a_{\it{i}}})$ is continuous. By \cite[Lemma 5.1.1]{Schneider} we have
\begin{equation}\label{continuity of Fi}
|K^{\bf a_{\it{i}}}| = \frac{1}{n}\sum_{j\neq i} {\bf d_{\it{j}}}\mathcal{H}^{n-1}(F_j^{\bf a_{\it{i}}}) + \frac{{\bf d_{\it{i}}}(1+{\bf a_{\it{i}}})}{n}\mathcal{H}^{n-1}(F_i^{\bf a_{\it{i}}}).
\end{equation}
By the same slicing argument above we see that $|K^{\bf a_{\it{i}}}|$ is continuous. Hence by \eqref{continuity of Fi}, so too is $\mathcal{H}^{n-1}(F_i^{\bf a_{\it{i}}})$. We may now choose $a_0$ so that $\mathcal{H}^{n-1}(F_j^{{\bf a_{\it{i}}}})>\mathcal{H}^{n-1}(F_j)/2$ for all $j\in \mathscr{I}$.

The previous argument works whenever we perturb one of the facets. Now, given arbitrary ${\bf a}$ we construct ${\bf a_1},...,{\bf a_N}$ so that the $j$-th components satisfy ${\bf a_{i,\it{j}}} = {\bf a_{\it{j}}}$ for $j\leq i$ and ${\bf a_{i,\it{j}}}=0$ for $j>i$. Notice that ${\bf a_N}={\bf a}$, and set ${\bf a_0}={\bf 0}$. By the above, if each ${\bf a_{\it{i}}}$ is sufficiently small then 
\begin{equation}
\mathcal{H}^{n-1}(F_j^{\bf a}) = \mathcal{H}^{n-1}(F_j^{\bf a_N}) \geq \frac{1}{2}\mathcal{H}^{n-1}(F_j^{\bf a_{N-1}})\geq \frac{1}{2^N}\mathcal{H}^{n-1}(F_j^{\bf a_0})=\frac{1}{2^N}\mathcal{H}^{n-1}(F_j)>0.
\end{equation} 
In principle the choice of ${\bf a_{\it{i}}}$ depends on that of ${\bf a_{\it{1}}},...,{\bf a_{\it{i}-1}}$, but since this holds for any $i\in \mathscr{I}$, they all depend on ${\bf a_{\it{1}}}$, which in turn depends on $n$ and $K$. 
\end{proof}
We will also need to determine a modulus of continuity for the maps ${\bf a}\mapsto |K^{\bf a}\Delta K|$ and ${\bf a}\mapsto d_H(K^{\bf a},K)$.
\begin{lem}\label{Equivalence}
There exists $C(n,K)>0$ such that for any $K^{\bf a},K^{\bf a'}\in \mathcal{C}_{\text{par}}(K)$ we have
\begin{equation}
\|{\bf a}-{\bf a'}\|_{\ell^{\infty}}\leq C|K^{\bf a}\Delta K^{\bf a'}|\leq C^2d_H(K^{\bf a},K^{\bf a'})\leq C^3\|{\bf a}-{\bf a'}\|_{\ell^{\infty}}.
\end{equation}\label{equivalent metrics}
In particular, the metrics induced by $\|\cdot\|_{\ell^{\infty}}$, $d_H$, and $L^1$ are all strongly equivalent. 
\end{lem}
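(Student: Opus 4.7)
The plan is to prove the three inequalities in sequence, taking $C$ at the end to be the maximum of the resulting constants. Throughout I will assume that $\|{\bf a}\|_{\ell^{\infty}}, \|{\bf a'}\|_{\ell^{\infty}} \leq a_0$ as in Lemma \ref{Parallel Polytopes Exist}, so both bodies lie in a fixed ball around the origin and all their facets have areas bounded below by a constant depending only on $n, K$. Set $\delta := \|{\bf a} - {\bf a'}\|_{\ell^{\infty}}$.

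For the rightmost inequality $d_H(K^{\bf a}, K^{\bf a'}) \leq C\|{\bf a} - {\bf a'}\|_{\ell^\infty}$, I would compare the gauge functions $f_*^{\bf a}$ and $f_*^{\bf a'}$. For $x \in K^{\bf a}$ one has $\la x, \nu_i\ra \leq {\bf d_{\it{i}}}(1+{\bf a_{\it{i}}})$ for every $i$, so
$$f_*^{\bf a'}(x) = \max_{i} \frac{\la x,\nu_i\ra}{{\bf d_{\it{i}}}(1 + {\bf a_{\it{i}}'})} \leq \max_{i} \frac{1 + {\bf a_{\it{i}}}}{1 + {\bf a_{\it{i}}'}} \leq 1 + \frac{\delta}{1 - a_0},$$
which gives $K^{\bf a} \subset (1 + c_1\delta) K^{\bf a'}$; by symmetry the reverse inclusion holds. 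Since both bodies are contained in $B_{2\Diam(K)}$, each point of $K^{\bf a}$ lies within Euclidean distance $2c_1\delta \Diam(K)$ of $K^{\bf a'}$, which yields the claim.

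For the middle inequality $|K^{\bf a} \Delta K^{\bf a'}| \leq C d_H(K^{\bf a}, K^{\bf a'})$, set $\rho := d_H(K^{\bf a}, K^{\bf a'})$. Then $K^{\bf a} \setminus K^{\bf a'}$ is contained in the outer tubular neighborhood $(K^{\bf a'} + \rho B_1) \setminus K^{\bf a'}$ of $\partial K^{\bf a'}$, whose volume is bounded by $\mathcal{H}^{n-1}(\partial K^{\bf a'})\rho + O(\rho^2)$. By Lemma \ref{Modulus continuity}, $\Phi(K^{\bf a'})$, hence $\mathcal{H}^{n-1}(\partial K^{\bf a'})$, is uniformly bounded in terms of $n, K$; combining with the symmetric inclusion for $K^{\bf a'}\setminus K^{\bf a}$ yields the estimate.

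The hard part is the leftmost inequality $\|{\bf a} - {\bf a'}\|_{\ell^\infty} \leq C|K^{\bf a}\Delta K^{\bf a'}|$, which requires producing an explicit portion of the symmetric difference of controllable volume. Pick $i_0 \in \mathscr{I}$ realizing $\delta = |{\bf a_{\it{i_0}}} - {\bf a_{\it{i_0}}'}|$, and without loss of generality assume ${\bf a_{\it{i_0}}} > {\bf a_{\it{i_0}}'}$. The slab
$$S := K^{\bf a} \cap \{x \in \mathbb{R}^n : {\bf d_{\it{i_0}}}(1 + {\bf a_{\it{i_0}}'}) < \la x, \nu_{i_0}\ra \leq {\bf d_{\it{i_0}}}(1 + {\bf a_{\it{i_0}}})\}$$
lies entirely in $K^{\bf a} \setminus K^{\bf a'}$, since its points violate the $i_0$-th defining inequality of $K^{\bf a'}$. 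Because $0 \in K^{\bf a}$, convexity of $K^{\bf a}$ implies that for each $s \in [{\bf a_{\it{i_0}}'}, {\bf a_{\it{i_0}}}]$ the cross-section $K^{\bf a} \cap \Sigma_{\nu_{i_0}}^{1+s}$ contains the dilate of $F_{i_0}^{\bf a}$ from the origin by the factor $(1+s)/(1+{\bf a_{\it{i_0}}}) \geq (1-a_0)/(1+a_0)$; integrating in $s$ via Fubini gives $|S| \geq c(n, K)\mathcal{H}^{n-1}(F_{i_0}^{\bf a}){\bf d_{\it{i_0}}}\delta$. The lower bound $\mathcal{H}^{n-1}(F_{i_0}^{\bf a}) \geq c(n,K) > 0$ from Lemma \ref{Parallel Polytopes Exist} then yields $\delta \leq C|K^{\bf a}\Delta K^{\bf a'}|$, closing the chain.
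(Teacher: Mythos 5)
Your proof is correct, but it takes a genuinely different route from the paper on all three inequalities, so let me compare. For $d_H\leq C\|{\bf a}-{\bf a'}\|_{\ell^{\infty}}$ the paper invokes the support-function identity $d_H(K^{\bf a},K^{\bf a'})=\|f^{\bf a}-f^{\bf a'}\|_{L^{\infty}(S^{n-1})}$ from Schneider and then estimates the difference of gauge functions; your nested-dilation argument $K^{\bf a}\subset(1+c_1\delta)K^{\bf a'}$ (and its converse) reaches the same conclusion more directly and avoids the support-function machinery. For $|K^{\bf a}\Delta K^{\bf a'}|\leq C\,d_H$ the paper simply cites Groemer's theorem, whereas your tubular-neighborhood/Steiner bound is a self-contained proof of the special case needed; just note that the quadratic correction in the Steiner formula is harmless only because $d_H$ is a priori bounded by the containment of all bodies in a fixed ball, which your standing assumption supplies. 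For the key inequality $\|{\bf a}-{\bf a'}\|_{\ell^{\infty}}\leq C|K^{\bf a}\Delta K^{\bf a'}|$ the paper interpolates along ${\bf a_s}=(1-s){\bf a}+s{\bf a'}$, partitions the symmetric difference into pieces swept out by each facet, and slices each piece; you instead isolate the single index realizing the $\ell^{\infty}$ norm and exhibit one explicit slab inside $K^{\bf a}\setminus K^{\bf a'}$, with the cross-sectional area bounded below by dilating the extremal facet toward the origin. Your version is arguably cleaner, since the uniform lower bound on cross-sections is made explicit via the dilation factor $(1-a_0)/(1+a_0)$ and the facet-area lower bound of Lemma \ref{Parallel Polytopes Exist}, whereas the paper's constants $c_j$ are asserted to be controllable by minimizing over $[-a_0,a_0]^N$. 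One caveat common to both arguments: you restrict to $\|{\bf a}\|_{\ell^{\infty}},\|{\bf a'}\|_{\ell^{\infty}}\leq a_0$, which is strictly narrower than the stated hypothesis $K^{\bf a},K^{\bf a'}\in\mathcal{C}_{\text{par}}(K)$; the paper's own proof makes the same implicit restriction (``choosing $a_0\leq 1/2$\dots''), and the lemma is only ever applied in that regime, so this is consistent rather than a gap.
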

\begin{proof} We first show there exists a $C(n,K)$ such that $d_H(K^{\bf a},K^{\bf a'})\leq C\|{\bf a}-{\bf a'}\|_{\ell^{\infty}}$. From \cite[Theorem 1.8.11]{Schneider} the Hausdorff distance between convex sets $K$ and $K'$ with surface tensions $f$ and $f'$ respectively can be computed as
\begin{equation}\label{Hausdorff ell infinity}
d_H(K,K')=\|f-f'\|_{L^{\infty}(S^{n-1})}.
\end{equation}
Fix $K^{\bf a}$ and $K^{\bf a'}$ (not necessarily parallel). Then without loss of generality there exists $\nu \in S^{n-1}$ such that 
\begin{equation}
d_H(K^{\bf a},K^{\bf a'}) = f^{\bf a}(\nu)-f^{\bf a'}(\nu) = \sup_{x\in K^{\bf a}}\la x,\nu\ra - \sup_{x\in K^{\bf a'}}\la x,\nu\ra.
\end{equation}
Let $x^{\bf a}\in \partial K^{\bf a}$ attain the first supremum. Recall that $x\in \partial K^{\bf a'}$ if and only if $f_*^{\bf a'}(x)=1$. By 1-homogeneity it follows that $x^{\bf a}/f_*^{\bf a'}(x^{\bf a})\in \partial K^{\bf a}$, and is a valid candidate in the latter supremum. Thus,
\begin{equation}
d_H(K^{\bf a},K^{\bf a'}) \leq \la x^{\bf a},\nu \ra - \left\la \frac{x^{\bf a}}{f_*^{\bf a'}(x^{\bf a})},\nu \right\ra = \left\la \left[\frac{f_*^{\bf a'}(x^{\bf a})-1}{f_*^{\bf a'}(x^{\bf a})}\right]x^{\bf a},\nu \right\ra \leq \bigg|\frac{f_*^{\bf a'}(x^{\bf a})-1}{f_*^{\bf a'}(x^{\bf a}/|x^{\bf a}|)} \bigg|.
\end{equation}
Appealing to the facts that $f_*^{\bf a}(x^{\bf a})=1$, $M_{\Phi}^{\bf a'} = \sup_{x \in S^{n-1}} 1/f_*^{\bf a'}(x)$, and $K^{\bf a}\subset B_{M_{\Phi}^{\bf a}}(0)$ we have
\begin{align}
d_H(K^{\bf a},K^{\bf a'}) &\leq M_{\Phi}^{\bf a'}|f_*^{\bf a'}(x^{\bf a})-f_*^{\bf a}(x^{\bf a})| \\
&\leq |x^{\bf a}|M_{\Phi}^{\bf a'}\left|f_*^{\bf a'}\left(\frac{x^{\bf a}}{|x^{\bf a}|}\right) - f_*^{\bf a}\left(\frac{x^{\bf a}}{|x^{\bf a}|}\right)\right| \leq M_{\Phi}^{\bf a}M_{\Phi}^{\bf a'}\|f_*^{\bf a'}-f_*^{\bf a}\|_{L^{\infty}(S^{n-1})}.
\end{align}
Finally observe that $K^{\bf a}\subset K^{\bf 1}$, where ${\bf 1}=(1,...,1)$. Letting $R(n,K)>0$ denote the smallest $R>0$ such that $K^{\bf 1}\subset B_R(0)$, we finally have
\begin{equation}
d_H(K^{\bf a},K^{\bf a'}) \leq R^2 \|f_*^{\bf a'}-f_*^{\bf a}\|_{L^{\infty}(S^{n-1})}.
\end{equation}
It now suffices to show there exists $C(n,K)>0$ such that $\|f_*^{\bf a'}-f_*^{\bf a}\|_{L^{\infty}(S^{n-1})}\leq C\|{\bf a}-{\bf a'}\|_{\ell^{\infty}}$. As before, without loss of generality we may find $\nu \in S^{n-1}$ such that
\begin{equation}
\|f_*^{\bf a'}-f_*^{\bf a}\|_{L^{\infty}(S^{n-1})} = f_*^{\bf a'}(\nu)- f_*^{\bf a}(\nu) = \max_{i\in \mathscr{I}} \frac{\la \nu, \nu_i \ra}{{\bf d_{\it{i}}}(1+{\bf a_{\it{i}}'})}-\max_{i\in \mathscr{I}} \frac{\la \nu, \nu_i \ra}{{\bf d_{\it{i}}}(1+{\bf a_{\it{i}}})}.
\end{equation}
Letting $\nu_i$ attain the first maximum, we have
\begin{equation}
\|f_*^{\bf a'}-f_*^{\bf a}\|_{L^{\infty}(S^{n-1})} \leq  \frac{\la \nu, \nu_i \ra}{{\bf d_{\it{i}}}(1+{\bf a_{\it{i}}'})}-\frac{\la \nu, \nu_i \ra}{{\bf d_{\it{i}}}(1+{\bf a_{\it{i}}})} = \left[\frac{{\bf a_{\it{i}}}-{\bf a_{\it{i}}'}}{{\bf d_{\it{i}}}(1+{\bf a_{\it{i}}'})(1+{\bf a_{\it{i}}})}\right]\la \nu,\nu_i \ra.
\end{equation}
In particular, choosing $a_0\leq 1/2$ we have that ${\bf a_{\it{i}}},{\bf a_{\it{i}}'}\geq -1/2$ and
\begin{equation}
\|f_*^{\bf a'}-f_*^{\bf a}\|_{L^{\infty}(S^{n-1})} \leq \frac{4}{{\bf d_{\it{i}}}}|{\bf a_{\it{i}}}-{\bf a_{\it{i}}'}| \leq \frac{4}{m_{\Phi}}\|{\bf a}-{\bf a'}\|_{\ell^{\infty}}
\end{equation}
as $m_{\Phi} = \inf_{\nu\in S^{n-1}}f(\nu)$, and $f(\nu_i) = {\bf d_{\it{i}}}$ by definition. 

Next we show there exists $C(n,K)>0$ such that $\|{\bf a}-{\bf a'}\|_{\ell^{\infty}}\leq C|K^{\bf a}\Delta K^{\bf a'}|$. Fix $K^{\bf a}$ and $K^{\bf a'}$, and consider $K^{\bf a_s}$ where ${\bf a_s} = (1-s){\bf a}+s{\bf a'}$. Let 
\begin{equation}
E_j = \bigcup_{s\in [0,1]} F_j^{\bf a_s}\cap (K^{\bf a}\Delta K^{\bf a'})
\end{equation}
and observe the $E_j$ partition $K^{\bf a}\Delta K^{\bf a'}$ (away from sets of measure zero), having parallel facets $F_j^{\bf a}\cap (K^{\bf a}\Delta K^{\bf a'})$ and $F_j^{\bf a'}\cap (K^{\bf a}\Delta K^{\bf a'})$. Next by slicing,
\begin{equation}
|K^{\bf a}\Delta K^{\bf a'}|= \sum_{j\in \mathscr{I}}|E_j| = \sum_{j\in \mathscr{I}}\bigg|\int_{{\bf a_{\it{j}}}}^{\bf a_{\it{j}}'} \mathcal{H}^{n-1}(E_j \cap \Sigma_{\nu_j}^{1+s}) \ ds\bigg| \geq \sum_{j\in \mathscr{I}} c_j|{\bf a_{\it{j}}}-{\bf a_{\it{j}}'}| \geq c\|{\bf a}-{\bf a'}\|_{\ell^{\infty}}
\end{equation}
where $c_j = \min_{s\in [0,1]}\{\mathcal{H}^{n-1}(E_j \cap \Sigma_{\nu_j}^{1+s})\}$ and $c = \min_{j\in \mathscr{I}}\{c_j\}$. In principle these depend on ${\bf a}$ and ${\bf a'}$, but once more we can take a minimum over $[-a_0,a_0]^N$ to remove this dependence. 

Finally we show $|K^{\bf a}\Delta K^{\bf a'}|\leq Cd_H(K^{\bf a},K^{\bf a'})$. The main theorem of \cite{Groemer}, part i) states that: If $D=\max\{\Diam(K^{\bf a}), \Diam(K^{\bf a'})\}$, then
\begin{equation}
|K^{\bf a}\Delta K^{\bf a'}| \leq C\,d_H(K^{\bf a},K^{\bf a'}) , \ \ \  C = \frac{2\omega_n}{2^{1/n}-1}\left(\frac{D}{2}\right)^{n-1}.
\end{equation}
But, with $R$ defined earlier, we have that $\Diam(K^{\bf a})\leq 2R$ for any $K^{\bf a}$, and so we may choose $C=2\omega_nR^{n-1}/(2^{1/n}-1)$. 
\end{proof}

We now consider parallel perturbations in one direction. That is, we suppose that $K^{\bf a},K^{\bf a'}\in \mathcal{C}_{\text{par}}(K)$ are such that there exists $i\in \mathscr{I}$ where for all $j\in \mathscr{I}$ with $j\neq i$ we have ${\bf a_{\it{j}}}={\bf a_{\it{j}}'}$. 
We will call this a \textit{parallel perturbation in the direction} $\nu_i$.

We first estimate $\mathcal{H}^{n-1}(F_i^{\bf a}\Delta F_i^{\bf a'})$, up to suitable translation, in terms of the $L^1$ norm of the other facets. 
\begin{lem}[Bound on parallel facets in the perturbed direction]\label{Symmetric Difference Perturbed Parallel Facet Bound} Let $K^{\bf a},K^{\bf a'}\in \mathcal{C}_{\text{par}}(K)$ be a parallel perturbation in the direction $\nu_i$. Then,
\begin{equation}\label{Sym Diff Perturbed Parallel Facet Bound}
\mathcal{H}^{n-1}(F_i^{\bf a} \Delta (F_i^{\bf a'}+u_i))\leq \sum_{j\neq i}\mathcal{H}^{n-1}(F_j^{\bf a}\Delta F_j^{\bf a'})
\end{equation}
where $u_i = {\bf d_{\it{i}}}({\bf a_{\it{i}}}-{\bf a_{\it{i}}'})\nu_i$.
\end{lem}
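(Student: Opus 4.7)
The plan is to construct an area-nonincreasing set map from $F_i^{\bf a}\Delta (F_i^{\bf a'}+u_i)$ into $\bigcup_{j\neq i}(F_j^{\bf a}\Delta F_j^{\bf a'})$ via orthogonal projection along $\nu_i$. Let $H := \Sigma_{\nu_i}^{1+{\bf a_{\it{i}}}}$ and define $\pi_H:\mathbb{R}^n\to H$ by $\pi_H(y) = y + (c_i - \la y,\nu_i\ra)\nu_i$, where $c_i := {\bf d_{\it{i}}}(1+{\bf a_{\it{i}}})$ and $c_i' := {\bf d_{\it{i}}}(1+{\bf a_{\it{i}}'})$. The crucial property is that the restriction of $\pi_H$ to any affine hyperplane with unit normal $\nu_j$ scales the $(n-1)$-dimensional Hausdorff measure by the factor $|\la \nu_i,\nu_j\ra|\leq 1$ (this Jacobian is zero when $\la\nu_i,\nu_j\ra=0$, in which case the image degenerates). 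Once the set-theoretic inclusion $F_i^{\bf a}\Delta (F_i^{\bf a'}+u_i) \subseteq \bigcup_{j\neq i}\pi_H(F_j^{\bf a}\setminus F_j^{\bf a'})$ is established (assuming WLOG ${\bf a_{\it{i}}}\geq {\bf a_{\it{i}}'}$, so that $K^{\bf a}\supseteq K^{\bf a'}$ and $F_j^{\bf a}\Delta F_j^{\bf a'} = F_j^{\bf a}\setminus F_j^{\bf a'}$), subadditivity of $\mathcal{H}^{n-1}$ combined with the area contraction immediately yields the lemma.

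To establish the inclusion, for (almost every) $x$ in $F_i^{\bf a}\Delta (F_i^{\bf a'}+u_i)$, I trace the ray $\{x-t\nu_i : t\geq 0\}$ and set $y(x) := x - t^*(x)\nu_i$ to be its first crossing of $\partial K^{\bf a}$. By the explicit form of $\pi_H$, one has $\pi_H(y(x)) = x$, and generically $y(x)$ lies in the interior of some facet $F_{j^*}^{\bf a}$ with $j^*\neq i$. To pinpoint $y(x)$ inside $F_{j^*}^{\bf a}\setminus F_{j^*}^{\bf a'}$, it suffices to show the crossing time satisfies $t^*(x) < c_i - c_i'$, equivalently $\la y(x),\nu_i\ra > c_i'$, which places $y(x)$ outside $K^{\bf a'}$.

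The verification of this bound splits naturally into two cases. When $x\in F_i^{\bf a}\setminus (F_i^{\bf a'}+u_i)$, the point $x$ already lies in $K^{\bf a}$ and the assumption produces some $j_0$ with $\la x,\nu_{j_0}\ra > c_{j_0}+\la u_i,\nu_{j_0}\ra$; combining this with $x\in F_i^{\bf a}$ forces $\la \nu_i,\nu_{j_0}\ra < 0$, and a direct computation gives $t_{j_0}:=(\la x,\nu_{j_0}\ra - c_{j_0})/\la \nu_i,\nu_{j_0}\ra < c_i - c_i'$, whence $t^*(x)\leq t_{j_0}$ does the job. When $x\in (F_i^{\bf a'}+u_i)\setminus F_i^{\bf a}$, the situation is more delicate: $x$ itself may lie outside $K^{\bf a}$, but $x-u_i\in F_i^{\bf a'}\subseteq K^{\bf a'}\subseteq K^{\bf a}$, so the segment $\{x-t\nu_i : t\in[0,c_i-c_i']\}$ must first cross $\partial K^{\bf a}$ strictly before reaching $x-u_i$, again yielding $t^*(x) < c_i - c_i'$. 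Handling this second case cleanly is the main obstacle: for a general crystalline Wulff shape (e.g.\ the octahedron) there can be facets with $\la\nu_i,\nu_j\ra > 0$, so $F_i^{\bf a'}+u_i$ need not be contained in $K^{\bf a}$. Interpreting $y(x)$ as ``first crossing of $\partial K^{\bf a}$'' (rather than ``first exit'' from $K^{\bf a}$) is what unifies both cases, and after that the area identity $\mathcal{H}^{n-1}(\pi_H(S)) \leq |\la\nu_i,\nu_j\ra|\,\mathcal{H}^{n-1}(S)$ applied termwise closes the argument.
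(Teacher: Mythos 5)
Your proof is correct, but it takes a genuinely different route from the paper. The paper dualizes: it writes $\mathcal{H}^{n-1}(F_i^{\bf a}\Delta(F_i^{\bf a'}+u_i))$ as a supremum of $\int_{F_i^{\bf a}}\varphi-\int_{F_i^{\bf a'}}\varphi$ over test functions $\varphi$ that are constant along $\nu_i$, applies the divergence theorem to $\varphi\,\nu_i$ on $K^{\bf a}$ and $K^{\bf a'}$ to get $\int_{F_i^{\bf \bullet}}\varphi=\sum_{j\neq i}\cos(\theta_{ij})\int_{F_j^{\bf \bullet}}\varphi$, and then bounds each term by the corresponding symmetric difference using $|\cos\theta_{ij}|\leq 1$. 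You instead prove the pointwise covering $F_i^{\bf a}\Delta(F_i^{\bf a'}+u_i)\subseteq\bigcup_{j\neq i}\pi_H(F_j^{\bf a}\setminus F_j^{\bf a'})$ (up to an $\mathcal{H}^{n-1}$-null set) and conclude by subadditivity plus the Jacobian factor $|\la\nu_i,\nu_j\ra|\leq1$ of the orthogonal projection between the two hyperplanes. The two arguments are really dual to one another --- the divergence-theorem identity is the integrated form of your cosine/projection fact --- but yours is more elementary (no Riesz representation, no approximation of $C^0$ by $C^1$, no reduction to $\mathbb{R}^{n-1}$ via an isometry) at the price of the ray-tracing case analysis, which the functional formulation sidesteps entirely; the paper's formulation is also the one that recycles directly in Lemma \ref{Symmetric Difference Perturbed Other Facet Bound} and Lemma \ref{Transverse Estimate}. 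Two small points to tighten in a full write-up: in your first case $x$ already lies on $\partial K^{\bf a}$, so ``first crossing'' should be read as the exit point of the segment $\overline{K^{\bf a}}\cap\{x-t\nu_i:t\geq0\}$ (well defined by convexity, and your bound $t^*\leq t_{j_0}<c_i-c_i'$ then goes through); and in your second case the strict inequality $t^*<c_i-c_i'$ uses that $x-u_i$ lies in the relative interior of $F_i^{\bf a'}$, which holds for $\mathcal{H}^{n-1}$-a.e.\ such $x$ --- you flag the a.e.\ qualification, and that suffices for the measure estimate.
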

\begin{proof} $\,$

\noindent
\textit{Step 1:} We first note that if $E,F\subset \mathbb{R}^n$ are Borel then,
\begin{equation}\label{Sym Diff cont func}
|E\Delta F| = \sup_{\varphi\in C^0(\mathbb{R}^n), |\varphi|\leq 1}\left[\int_E \varphi(x) \ dx - \int_F \varphi(x) \ dx\right].
\end{equation}
By the Riesz Representation theorem, $(C^0(\mathbb{R}^n),\|\cdot\|_{L^{\infty}})^* \simeq (\mathcal{M}(\mathbb{R}),\|\cdot\|_{TV})$ where $\mathcal{M}(\mathbb{R}^n)$ is the set of (signed) Borel measures on $\mathbb{R}^n$. Since $|E\Delta F| = \|\mu_E -\mu_F\|_{TV}$ where $\mu_E =\mathcal{H}^n\mres E$, \eqref{Sym Diff cont func} is then a consequence of computing $\|\mu_E -\mu_F\|_{TV}$ via Riesz Representation as above. We will actually apply this on $\mathbb{R}^{n-1}$ instead of $\mathbb{R}^n$, but for readability it is easier to state as above.

\noindent
\textit{Step 2:} We next simplify the computation by showing that
\begin{equation}\label{Facet Sym Diff Supremum}
\mathcal{H}^{n-1}(F_i^{\bf a} \Delta (F_i^{\bf a'}+u_i)) = \sup_{\varphi\in C_{\nu_i}^0(\mathbb{R}^n), |\varphi|\leq 1}\left[\int_{F_i^{\bf a}} \varphi(x) \ d\mathcal{H}^{n-1}(x) - \int_{F_i^{\bf a'}} \varphi(x) \ d\mathcal{H}^{n-1}(x)\right]
\end{equation}
where 
$$C_{\nu_i}^0(\mathbb{R}^n) = \{\varphi \in C^0(\mathbb{R}^n) \ | \ \varphi(x) = \varphi(x+t\nu_i) \ \text{for all} \ t\in\mathbb{R}\}.$$
First notice that
\begin{align}
&\sup_{\varphi\in C_{\nu_i}^0(\mathbb{R}^n), |\varphi|\leq 1}\left[\int_{F_i^{\bf a}} \varphi(x) \ d\mathcal{H}^{n-1}(x) - \int_{F_i^{\bf a'}} \varphi(x) \ d\mathcal{H}^{n-1}(x)\right] \nonumber \\
& \ \ \ \ \ \ \ \ \ \ = \sup_{\phi\in C^0(\mathbb{R}^{n-1}), |\phi|\leq 1}\left[\int_{L(F_i^{\bf a})} \phi(y) \ dy - \int_{L(F_i^{\bf a'}+u_i)} \phi(y) \ dy\right] \label{Invariance conversion}
\end{align}
where $\Sigma_{\nu_i}^{1+{\bf a_{\it{i}}}}:=\{\la x,\nu_i \ra = {\bf d_{\it{i}}}(1+{\bf a_{\it{i}}})\}$ and $L:\Sigma_{\nu_i}^{1+{\bf a_i}}\to \mathbb{R}^{n-1}$ is a fixed isometry. This proves \eqref{Facet Sym Diff Supremum} since
\begin{align*}
\sup_{\phi\in C^0(\mathbb{R}^{n-1}), |\phi|\leq 1}\left[\int_{L(F_i^{\bf a})} \phi(y) \ dy - \int_{L(F_i^{\bf a'}+u_i)} \phi(y) \ dy\right] &= \mathcal{L}^{n-1}(L(F_i^{\bf a})\Delta L(F_i^{\bf a'}+u_i))\\
&= \mathcal{L}^{n-1}(L(F_i^{\bf a}\Delta (F_i^{\bf a'}+u_i)))\\
&= \mathcal{H}^{n-1}(F_i^{\bf a}\Delta (F_i^{\bf a'}+u_i))
\end{align*}
by Step 1 and the fact that $L$ is an isometry. To show \eqref{Invariance conversion}, given $\varphi\in C_{\nu_i}^0(\mathbb{R}^n)$ with $|\varphi|\leq 1$,
$$\int_{F_i^{\bf a'}} \varphi(x) \ d\mathcal{H}^{n-1}(x) = \int_{F_i^{\bf a'}} \varphi(x-u_i) \ d\mathcal{H}^{n-1}(x) = \int_{F_i^{\bf a'}+u_i} \varphi(x) \ d\mathcal{H}^{n-1}(x),$$
owing to the invariance of $\varphi$ along $\nu_i$. Then, setting $\phi(y) = \varphi(L^{-1}(y))$ we have
$$\int_{F_i^{\bf a}} \varphi(x) \ d\mathcal{H}^{n-1}(x)-\int_{F_i^{\bf a'}+u_i} \varphi(x) \ d\mathcal{H}^{n-1}(x) = \int_{L(F_i^{\bf a})}\phi(y) \ dy-\int_{L(F_i^{\bf a'}+u_i)}\phi(y) \ dy.$$
Since $\phi \in C^0(\mathbb{R}^{n-1})$ and $|\phi|\leq 1$, we necessarily get
\begin{align*}
&\sup_{\varphi\in C_{\nu_i}^0(\mathbb{R}^n), |\varphi|\leq 1}\left[\int_{F_i^{\bf a}} \varphi(x) \ d\mathcal{H}^{n-1}(x) - \int_{F_i^{\bf a'}} \varphi(x) \ d\mathcal{H}^{n-1}(x)\right] \\
& \ \ \ \ \ \ \ \ \ \ \leq \sup_{\phi\in C^0(\mathbb{R}^{n-1}), |\phi|\leq 1}\left[\int_{L(F_i^{\bf a})} \phi(y) \ dy - \int_{L(F_i^{\bf a'}+u_i)} \phi(y) \ dy\right].
\end{align*}
We now prove equality by showing each $\phi\in C^0(\mathbb{R}^{n-1})$ with $|\phi|\leq 1$ can be obtained this way. Given $\phi \in C^0(\mathbb{R}^{n-1})$ with $|\phi|\leq 1$, define $\varphi:\mathbb{R}^n\to \mathbb{R}$ by
$$\varphi(x) = \phi(L(x-\la x,\nu_i\ra \nu_i)) = \phi(L(x)-\la x,\nu_i \ra L(\nu_i)).$$
For any $t\in \mathbb{R}$ we have
$$\varphi(x+t\nu_i) = \phi(L(x+t\nu_i)-\la x+t\nu_i,\nu_i \ra L(\nu_i))  = \phi(L(x)-\la x,\nu_i\ra L(\nu_i)) = \varphi(x)$$
so that $\varphi\in C_{\nu_i}^0(\mathbb{R}^n)$ and $|\varphi|\leq 1$.

\noindent
\textit{Step 3:} We now show \eqref{Sym Diff Perturbed Parallel Facet Bound}. Let $\varphi \in C_{\nu_i}^1(\mathbb{R}^n)$. Then $\la \nabla \varphi(x), \nu_i \ra \equiv 0$ and since $\nu_i$ is a constant vector field, $\Div(\nu_i)\equiv 0$. Hence,
$$\int_{ K^{\bf a}} \Div(\varphi(x)\nu_i) \ dx = \int_{K^{\bf a}} \la \nabla \varphi(x), \nu_i\ra + \varphi(x)\Div(\nu_i) \ dx = 0.$$
In particular, applying the divergence theorem on $K^{\bf a}$ with the vector field $X(x) = \varphi(x) \nu_i$ yields
\begin{align*}
0 &=\int_{ K^{\bf a}} \Div(\varphi(x)\nu_i) \ dx\\
&= \int_{\partial^* K^{\bf a}} \la \varphi(x)\nu_i,\nu_{K^{\bf a}}(x) \ra \ d\mathcal{H}^{n-1}(x)=\sum_{j\in \mathscr{I}} \int_{F_j^{\bf a}} \varphi(x) \la \nu_i, \nu_j \ra \ d\mathcal{H}^{n-1}(x)\\
&= \int_{F_i^{\bf a}} \varphi(x) \ d\mathcal{H}^{n-1}(x)-\sum_{j\neq i} \cos(\theta_{ij})\int_{F_j^{\bf a}} \varphi(x) \ d\mathcal{H}^{n-1}(x)
\end{align*}
since $\nu_{K^{\bf a}}(x) \equiv \nu_j$ on $\Int(F_j^{\bf a})$ and $\cos(\theta_{ij}):=-\la \nu_i,\nu_j\ra$. Hence,
\begin{equation}\label{Facet Perturb Divergence Theorem}
\int_{F_i^{\bf a}} \varphi(x) \ d\mathcal{H}^{n-1}(x) = \sum_{j\neq i} \cos(\theta_{ij})\int_{F_j^{\bf a}} \varphi(x) \ d\mathcal{H}^{n-1}(x).
\end{equation}
By Step 2, we need only bound the difference
\begin{equation}
I := \int_{F_i^{\bf a}} \varphi(x) \ d\mathcal{H}^{n-1}(x) - \int_{F_i^{\bf a'}} \varphi(x) \ d\mathcal{H}^{n-1}(x).
\end{equation}
With the exact same logic we prove \eqref{Facet Perturb Divergence Theorem} with ${\bf a'}$ in place of ${\bf a}$. Combining this fact with \eqref{Facet Perturb Divergence Theorem} we see
\begin{align}\label{Relate perturbed Facet to others}
I &= \sum_{j\neq i} \cos(\theta_{ij})\left[\int_{F_j^{\bf a}} \varphi(x) \ d\mathcal{H}^{n-1}(x)- \int_{F_j^{\bf a'}} \varphi(x) \ d\mathcal{H}^{n-1}(x)\right] \nonumber \\
&\leq \sum_{j\neq i} \left[\int_{F_j^{\bf a}} \varphi(x) \ d\mathcal{H}^{n-1}(x)- \int_{F_j^{\bf a'}} \varphi(x) \ d\mathcal{H}^{n-1}(x)\right]
\end{align}
for all $\varphi \in C_{\nu_i}^1(\mathbb{R}^n)$. Notice that \eqref{Relate perturbed Facet to others} relies only on the $C^0$ data of $\varphi$. Approximating $\varphi \in C_{\nu_i}^0(\mathbb{R}^n)$ by $\varphi \in C_{\nu_i}^1(\mathbb{R}^n)$ shows that \eqref{Relate perturbed Facet to others} holds for $\varphi\in C_{\nu_i}^0(\mathbb{R}^n)$. In particular, taking the supremum over such $\varphi$ with $|\varphi|\leq 1$ and applying Step 2 on both sides,
\begin{align}
\mathcal{H}^{n-1}(F_i^{\bf a}\Delta(F_i^{\bf a'}+u_i)) &= \sup_{\varphi \in C_{\nu_i}^0(\mathbb{R}^n), |\varphi|\leq 1}\left[\int_{F_i^{\bf a}} \varphi(x) \ d\mathcal{H}^{n-1}(x) -\int_{F_i^{\bf a'}} \varphi(x) \ d\mathcal{H}^{n-1}(x)\right] \nonumber\\
&\leq \sum_{j\neq i} \sup_{\varphi \in C_{\nu_i}^0(\mathbb{R}^n), |\varphi|\leq 1}\left[\int_{F_j^{\bf a}} \varphi(x) \ d\mathcal{H}^{n-1}(x)- \int_{F_j^{\bf a'}} \varphi(x) \ d\mathcal{H}^{n-1}(x)\right] \nonumber \\
&\leq \sum_{j\neq i} \sup_{\varphi \in C^0(\mathbb{R}^n), |\varphi|\leq 1}\left[\int_{F_j^{\bf a}} \varphi(x) \ d\mathcal{H}^{n-1}(x)- \int_{F_j^{\bf a'}} \varphi(x) \ d\mathcal{H}^{n-1}(x)\right] \nonumber \\
&= \sum_{j\neq i} \mathcal{H}^{n-1}(F_j^{\bf a}\Delta F_j^{\bf a'}),
\end{align}
where the third line follows from the second as $C_{\nu_i}^0(\mathbb{R}^n)\subset C^0(\mathbb{R}^n)$.
This completes the proof.
\end{proof}
Having found a bound for $\mathcal{H}^{n-1}(F_i^{\bf a}\Delta (F_i^{\bf a'}+u_i))$ in terms of $\mathcal{H}^{n-1}(F_j^{\bf a}\Delta F_j^{\bf a'})$ for $j\neq i$, we now bound the latter by $\|{\bf a}-{\bf a'}\|_{\infty}$.
\begin{lem}[Bound on remaining facets]\label{Symmetric Difference Perturbed Other Facet Bound} With the setup in Lemma \ref{Symmetric Difference Perturbed Parallel Facet Bound}, for $j\neq i$
\begin{equation}\label{Sym Diff Perturbed Other Facet Bound}
\begin{cases} \mathcal{H}^{n-1}(F_j^{\bf a}\Delta F_j^{\bf a'}) = 0 \ & \ \text{if} \ j\in \mathscr{N}^{\bf a}(i)^c\cap \mathscr{N}^{\bf a'}(i)^c, \\
\mathcal{H}^{n-1}(F_j^{\bf a}\Delta F_j^{\bf a'}) \leq CM_{\Phi}^{n-1}\|{\bf a}-{\bf a'}\|_{\ell^{\infty}} \ & \ 
\text{otherwise.}\end{cases}
\end{equation}
\end{lem}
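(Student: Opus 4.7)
The proof proceeds by case analysis matching the dichotomy in the statement. For the first case, the key observation is that since ${\bf a}$ and ${\bf a'}$ agree outside the $i$-th coordinate, only the defining half-space for $\nu_i$ changes between $K^{\bf a}$ and $K^{\bf a'}$. Recalling the identity
\begin{equation}
F_j^{\bf a} = F_j^{1} \cap \{\la x,\nu_i \ra \leq {\bf d_{\it{i}}}(1+{\bf a_{\it{i}}})\}
\end{equation}
derived in the proof of Lemma \ref{Parallel Polytopes Exist}, and interpreting $\mathscr{N}^{\bf a}(i)$ as the collection of facets sharing an $(n-2)$-dimensional edge with $F_i$ in $K^{\bf a}$, the hypothesis $j \in \mathscr{N}^{\bf a}(i)^c \cap \mathscr{N}^{\bf a'}(i)^c$ means neither hyperplane $\Sigma_{\nu_i}^{1+{\bf a_{\it{i}}}}$ nor $\Sigma_{\nu_i}^{1+{\bf a_{\it{i}}'}}$ meets the relative interior of $F_j^{1}$, so both constraints are redundant and $F_j^{\bf a} = F_j^{\bf a'}$ up to $\mathcal{H}^{n-1}$-null sets.

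For the second case, assume without loss of generality that ${\bf a_{\it{i}}} < {\bf a_{\it{i}}'}$, so that $F_j^{\bf a} \subset F_j^{\bf a'}$ and the symmetric difference coincides with the strip $F_j^{1} \cap \{{\bf d_{\it{i}}}(1+{\bf a_{\it{i}}}) < \la x,\nu_i\ra \leq {\bf d_{\it{i}}}(1+{\bf a_{\it{i}}'})\}$. I plan to compute its $(n-1)$-measure by applying the coarea formula on $F_j^{1} \subset \Sigma_{\nu_j}^1$ to the affine function $g(x) := \la x,\nu_i\ra$. Its tangential gradient on $\Sigma_{\nu_j}^1$ is the projection of $\nu_i$ onto $\nu_j^{\perp}$, of constant magnitude $\sqrt{1-\la \nu_i,\nu_j\ra^2} = \sin(\theta_{ij})$, so coarea yields
\begin{equation}
\sin(\theta_{ij})\,\mathcal{H}^{n-1}(F_j^{\bf a} \Delta F_j^{\bf a'}) = \int_{{\bf d_{\it{i}}}(1+{\bf a_{\it{i}}})}^{{\bf d_{\it{i}}}(1+{\bf a_{\it{i}}'})} \mathcal{H}^{n-2}(F_j^{1} \cap \{g = t\}) \, dt.
\end{equation}

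Finally I estimate both factors. Since $F_j^{1} \subset K^{1}$ is contained in a ball of radius comparable to $M_{\Phi}$ (as noted in the proof of Lemma \ref{Equivalence}), each slice $F_j^{1} \cap \{g=t\}$ is an $(n-2)$-dimensional convex set of diameter $O(M_{\Phi})$ and hence has $\mathcal{H}^{n-2}$-measure at most $C(n)M_{\Phi}^{n-2}$; the interval length is ${\bf d_{\it{i}}}|{\bf a_{\it{i}}}-{\bf a_{\it{i}}'}| \leq M_{\Phi}\|{\bf a}-{\bf a'}\|_{\ell^{\infty}}$ since ${\bf d_{\it{i}}} = f(\nu_i) \leq M_{\Phi}$. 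Dividing by $\sin(\theta_{ij}) \geq \min_{k\neq l}\sin(\theta_{kl}) > 0$, a quantity depending only on $K$, produces the bound with $C = C(n,K)$. The main delicate point is that the constant $C$ necessarily absorbs a factor of $\csc(\theta_{ij})$ (echoing Remark \ref{Weak Equivalence}) and that the neighbor relation must be identified correctly so as to exclude the degenerate, non-contributing facets; past these bookkeeping issues, the argument reduces to a direct slicing computation.
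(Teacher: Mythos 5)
Your proposal is correct, and both halves reach the paper's bound by a slightly different route. For the first case, the paper argues via vertex sets: each vertex of $F_j^{\bf a}$ is cut out by at least $n$ active constraints not including the $i$-th one (since $F_i^{\bf a}\cap F_j^{\bf a}=\emptyset$), and these constraints are unchanged when passing to ${\bf a'}$, so $V(F_j^{\bf a})=V(F_j^{\bf a'})$. Your ``redundant constraint'' argument is equivalent and arguably cleaner, but the step ``$j\notin \mathscr{N}^{\bf a}(i)$ implies the hyperplane $\Sigma_{\nu_i}^{1+{\bf a_{\it{i}}}}$ does not cut $F_j$'' deserves a line: if some $y\in F_j^{\bf a'}\setminus F_j^{\bf a}$ existed, a segment from $y$ to any point of the (nonempty, by parallelism) facet $F_j^{\bf a}$ would cross $\Sigma_{\nu_i}^{1+{\bf a_{\it{i}}}}$ inside $F_j^{\bf a'}$, producing a point of $F_i^{\bf a}\cap F_j^{\bf a}$ and contradicting the hypothesis. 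Also note that the identity $F_j^{\bf a}=F_j^{1}\cap\{\la x,\nu_i\ra\leq {\bf d_{\it{i}}}(1+{\bf a_{\it{i}}})\}$ from Lemma \ref{Parallel Polytopes Exist} was derived with all other components of ${\bf a}$ equal to zero; here you should replace $F_j^{1}$ by the facet of $K^{\tilde{\bf a}}$ where $\tilde{\bf a}$ agrees with ${\bf a}$ off the $i$-th slot and has $\tilde{\bf a}_{\it{i}}=1$ --- a purely notational fix that does not affect the argument. For the second case, the paper encloses $F_j^{\bf a}\Delta F_j^{\bf a'}$ in an explicit box $S\cap Q_{M_{\Phi}^{\bf a}}\cap\Sigma_{\nu_j}^{1+{\bf a_{\it{j}}}}$ of measure ${\bf d_{\it{i}}}(2M_{\Phi}^{\bf a})^{n-2}\csc(\theta_{ij})|{\bf a_{\it{i}}}-{\bf a_{\it{i}}'}|$, whereas you slice the same slanted strip with the coarea formula applied to $g(x)=\la x,\nu_i\ra$ on $\Sigma_{\nu_j}^{1+{\bf a_{\it{j}}}}$; the constant tangential gradient $\sin(\theta_{ij})$ reproduces exactly the $\csc(\theta_{ij})$ factor, and the slice bound $C(n)M_{\Phi}^{n-2}$ matches the box's cross-section. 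The coarea version is a bit slicker (no need to verify the box is genuinely a product of orthogonal segments, which the paper spends half a page on), at the cost of invoking coarea; your remaining observations --- that $\sin(\theta_{ij})$ is bounded below over the finitely many admissible neighboring pairs and that the constant absorbs $\csc(\theta_{ij})$, consistent with Remark \ref{Weak Equivalence} --- are exactly right.
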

Here and later $\mathscr{N}^{\bf a}(i)$ denotes the collection of indices
\begin{equation}
\mathscr{N}^{\bf a}(i) :=\{j \in \mathscr{I} \ | \ F_i^{\bf a}\cap F_j^{\bf a} \neq \emptyset\}.
\end{equation}
I.e., those facets which neighbor $F_i^{\bf a}$ (on a possibly lower dimensional set).
\begin{proof} Suppose first that $j\in \mathscr{N}^{\bf a}(i)^c\cap \mathscr{N}^{\bf a'}(i)^c$, so that $F_i^{\bf a}\cap F_j^{\bf a} = F_i^{\bf a'}\cap F_j^{\bf a'} = \emptyset$. We show $F_j^{\bf a}=F_j^{\bf a'}$. Recall that $F_j^{\bf a} = \Co(V(F_j^{\bf a}))$, where $V(F_j^{\bf a})$ is the set of vertices of $F_j^{\bf a}$ and $\Co(\cdot)$ is the convex hull. It suffices then to show $V(F_j^{\bf a})=V(F_j^{\bf a'})$. Let $v_j^{\bf a} \in V(F_j^{\bf a})$. By definition there exists a maximal collection
\begin{equation}
\mathscr{I}_{v_j^{\bf a}} := \{l\in \mathscr{I} \ | \ \la v_j^{\bf a},\nu_l\ra = {\bf d_{\it{l}}}(1+{\bf a_{\it{l}}})\}
\end{equation}
with $|\mathscr{I}_{v_j^{\bf a}}|\geq n$; again these index the facets of $K^{\bf a}$ which contain $v_j^{\bf a}$. Since $F_i^{\bf a}\cap F_j^{\bf a} = \emptyset$, it follows that $i\not\in \mathscr{I}_{v_j^{\bf a}}$, and since ${\bf a_{\it{j}}}-{\bf a_{\it{j}}'} = a_i\delta_{ij}$ we can have in particular ${\bf a_{\it{l}}}-{\bf a_{\it{l}}'}=0$ for $l\in \mathscr{I}_{v_j^{\bf a}}$. Hence also
\begin{equation}
\mathscr{I}_{v_j^{\bf a}} = \{l\in \mathscr{I} \ | \ \la v_j^{\bf a},\nu_l\ra = {\bf d_{\it{l}}}(1+{\bf a_{\it{l}}'})\},
\end{equation}
implying $v_j^{\bf a} \in V(F_j^{\bf a'})$, as it is contained in at least $n$ facets of $K^{\bf a'}$. The reverse containment follows by an analogous argument.

Next, without loss of generality we assume ${\bf a_{\it{i}}}>{\bf a_{\it{i}}'}$ and let $j\in \mathscr{N}^{\bf a}(i)\cup \mathscr{N}^{\bf a'}(i)$, $j\neq i$. Recall $x\in F_j^{\bf a'}$ if and only if $\la x,\nu_j\ra = {\bf d_{\it{j}}}(1+{\bf a_{\it{j}}'})$ and $\la x,\nu_l \ra \leq {\bf d_{\it{l}}}(1+{\bf a_{\it{l}}'})$ for all $l\in \mathscr{I}$. Additionally
\begin{equation}
{\bf a_{\it{l}}'} = {\bf a_{\it{l}}} \ \ \text{for} \ l\neq i \ \ \ \ \text{and} \ \ \ \ {\bf a_{\it{i}}}> {\bf a_{\it{i}}'}.
\end{equation}
In particular, since $j\neq i$, $x\in F_j^{\bf a}$ too and we have that $F_j^{\bf a'}\subset F_j^{\bf a}$. So, $\mathcal{H}^{n-1}(F_j^{\bf a}\Delta F_j^{\bf a'}) = \mathcal{H}^{n-1}(F_j^{\bf a}\setminus F_j^{\bf a'})$. 

The idea now is to construct a nice set $E$ such that $F_j^{\bf a}\setminus F_j^{\bf a'}\subset E$ and $\mathcal{H}^{n-1}(E)\lesssim \|{\bf a}-{\bf a'}\|_{\ell^{\infty}}$. Consider the following sets:
\begin{align*}
S &= \left\lbrace \bigg|\la x,\nu_i \ra - {\bf d_{\it{i}}}\left(1+\frac{{\bf a_{\it{i}}}+{\bf a_{\it{i}}'}}{2}\right)\bigg| \leq \frac{{\bf d_{\it{i}}}}{2}|{\bf a_{\it{i}}}-{\bf a_{\it{i}}'}| \right\rbrace,\\
Q_R &= \bigcap_{l=1}^n \{|\la x,u_l \ra| \leq R\}, \ \text{and}\\
\Sigma_{\nu_j}^{1+{\bf a_{\it{j}}}} &=\{\la x,\nu_j \ra = {\bf d_{\it{j}}}(1+{\bf a_{\it{j}}})\}
\end{align*}
where $\{u_1,...,u_{n-2}\}$ is an orthonormal basis for $\Span(\{\nu_i,\nu_j\})^{\perp}$, 
$$u_{n-1} = \frac{\nu_i - \la \nu_i,\nu_j \ra \nu_j}{|\nu_i - \la \nu_i,\nu_j \ra \nu_j|} = \csc(\theta_{ij})\nu_i+\cot(\theta_{ij})\nu_j,$$
the co-normal to $F_j$ in the direction of $\nu_i$, and $u_n=\nu_j$. We let $E = S\cap Q_R\cap \Sigma_{\nu_j}^{1+{\bf a_{\it{j}}}}$ with $R$ to be decided momentarily. The purpose of the slab $S$ is to restrict the height of $E$, of $Q_R$ to restrict the $(n-2)$-dimensional ``width", and of $\Sigma_{\nu_j}^{1+{\bf a_{\it{j}}}}$ to ensure that $E$ is $(n-1)$-dimensional and $F_j^{\bf a}\setminus F_j^{\bf a'}\subset E$. Indeed, it follows readily from the definition of a facet, together with the fact that we consider only a perturbation in the $\nu_i$ direction, that
$$F_j^{\bf a}\setminus F_j^{\bf a'}\subset S\cap \Sigma_{\nu_j}^{1+{\bf a_{\it{j}}}}.$$
Of course, this set is unbounded and hence not a good candidate for $E$. On the other hand, $K^{\bf a}\subset B_{M_{\Phi}^{\bf a}}(0) \subset B_R(0)\subset Q_R$, which is bounded, for any $R\geq M_{\Phi}^{\bf a}$. Thus we let $E = S\cap Q_{M_{\Phi}^{\bf a}}\cap \Sigma_{\nu_j}^{1+{\bf a_{\it{j}}}}$, which evidently satisfies the desired properties. Finally, the specific choice of $Q_{M_{\Phi}^{\bf a}}$ allows us to easily bound $\mathcal{H}^{n-1}(E)$. It is oriented according to $\{u_1,...,u_n\}$ so that, in essence, $E$ is a box with $(n-2)$-dimensional width $2M_{\Phi}^{\bf a}$ a (slant) height $\csc(\theta_{ij})\cdot{\bf d_{\it{i}}}({\bf a_{\it{i}}}-{\bf a_{\it{i}}'})$. It follows that $E$ is isometric to a box in $\mathbb{R}^{n-1}$ having $n-2$ sides with length $2M_{\Phi}^{\bf a}$ and one side of length $\csc(\theta_{ij})\cdot {\bf d_{\it{i}}}({\bf a_{\it{i}}}-{\bf a_{\it{i}}'})$. The formal verification of this claim is deferred to the end of this proof. Figure \ref{Perturb Bound Figure} elucidates the situation.
\begin{figure}
\hspace{-20pt}
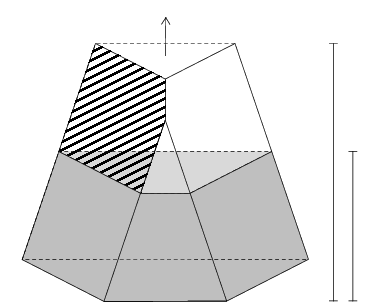
\hspace{20pt}
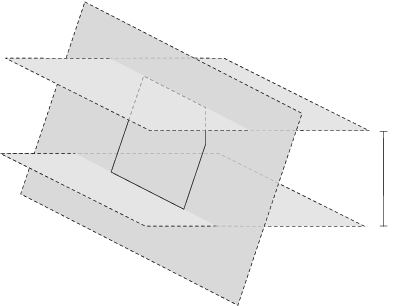
\caption{Left: A parallel perturbation in the direction $\nu_i$, with $F_j^{\bf a}\Delta F_j^{\bf a'}$ emphasized. Right: The set $F_j^{\bf a}\Delta F_j^{\bf a'}$ is contained within a slab $S$ of height $|{\bf a_{\it{i}}}-{\bf a_{\it{i}}'}|$ and the plane $\Sigma_{\nu_j}^{1+{\bf a_{\it{j}}}}$. As $F_j^{\bf a}\Delta F_j^{\bf a'}$ is slanted (according to $\nu_j$), its height is bounded by $\csc(\theta_{ij})\cdot {\bf d_{\it{i}}}|{\bf a_{\it{i}}}-{\bf a_{\it{i}}'}|$.}\label{Perturb Bound Figure}
\end{figure}
Thus,
\begin{equation}
\mathcal{H}^{n-1}(F_j^{\bf a}\Delta F_j^{\bf a'}) \leq \mathcal{H}^{n-1}(E) = {\bf d_{\it{i}}}(2M_{\Phi}^{\bf a})^{n-2}\csc(\theta_{ij})|{\bf a_{\it{i}}}-{\bf a_{\it{i}}'}|.
\end{equation}
Recall \cite[Theorem 1.8.11]{Schneider} that \eqref{Hausdorff ell infinity} holds and so
\begin{equation}
d_H(K,K^{\bf a})=\|f^{\bf a}-f\|_{L^{\infty}(S^{n-1})}.
\end{equation}
Thus for any $\nu\in S^{n-1}$,
\begin{equation}
f^{\bf a}(\nu) \leq |f^{\bf a}(\nu)-f(\nu)|+f(\nu) \leq d_H(K,K^{\bf a})+M_{\Phi}.
\end{equation}
Taking the supremum over $\nu\in S^{n-1}$ and applying Lemma \ref{Equivalence}, there exists $C(n,K)>0$ such that
\begin{equation}
M_{\Phi}^{\bf a} \leq M_{\Phi}+d_H(K,K^{\bf a}) \leq CM_{\Phi}
\end{equation}
as $\|{\bf a}\|_{\ell^{\infty}}<1$. Of course, we can absorb $M_{\Phi}$ into this constant but we separate them to highlight the scaling. In total,
\begin{align}
\mathcal{H}^{n-1}(F_j^{\bf a}\Delta F_j^{\bf a'}) &\leq {\bf d_{\it{i}}}(2CM_{\Phi})^{n-2}\csc(\theta_{ij})|{\bf a_{\it{i}}}-{\bf a_{\it{i}}'}|\\
&\leq CM_{\Phi}^{n-1}\csc(\theta_{ij})\|{\bf a}-{\bf a'}\|_{\ell^{\infty}}
\end{align}
since ${\bf d_{\it{i}}}=f(\nu_i)\leq \sup_{\nu\in S^{n-1}}f(\nu) = M_{\Phi}$.

To prove that $\mathcal{H}^{n-1}(E) = {\bf d_{\it{i}}}(2M_{\Phi}^{\bf a})^{n-2}\csc(\theta_{ij})|{\bf a_{\it{i}}}-{\bf a_{\it{i}}'}|$ we show $\partial E$ contains orthogonal segments of the desired lengths, and that the segments cannot be extended. Set
$$v = v_1\nu_i+v_2\nu_j, \ \ \ \ \ \begin{pmatrix}
v_1 \\
v_2
\end{pmatrix} = \csc(\theta_{ij})\begin{pmatrix}
\csc(\theta_{ij}) & \cot(\theta_{ij}) \\
\cot(\theta_{ij}) & \csc(\theta_{ij}) \\
\end{pmatrix}\begin{pmatrix}
{\bf d_{\it{i}}}(1+{\bf a_{\it{i}}})\\
{\bf d_{\it{j}}}(1+{\bf a_{\it{j}}})
\end{pmatrix}.$$
The vector $v$ is chosen precisely so that $\la v,u_1\ra =... = \la v,u_{n-2}\ra = 0$ and
\begin{align*}
\la v,\nu_i \ra &= v_1-\cos(\theta_{ij})v_2 = {\bf d_{\it{i}}}(1+{\bf a_{\it{i}}})\\
\la v,\nu_j \ra &= -\cos(\theta_{ij})v_1+v_2 = {\bf d_{\it{j}}}(1+{\bf a_{\it{j}}}).
\end{align*}
Together these imply that $tu_l +v \in \partial S \cap Q_{M_{\Phi}^{\bf a}}\cap \Sigma_{\nu_j}^{1+{\bf a_{\it{j}}}}\subset \partial E$ for $t$ sufficiently small. With this choice, the maximal domain for $l=1,...,n-2$ is $[-M_{\Phi}^{\bf a},M_{\Phi}^{\bf a}]$. Now consider for $t\in [t^-,t^+]$ the segment $\gamma_{n-1}(t)= -tu_{n-1}+v$ with $v$ as before. We wish to find the maximal interval $[t^-,t^+]$ such that $\gamma_{n-1}([t^-,t^+])\subset E$. That is, we find $t^-, t^+$ such that
$$\gamma_{n-1}(t^-) = {\bf d_{\it{i}}}(1+{\bf a_{\it{i}}}), \ \ \ \ \ \ \gamma_{n-1}(t^+) = {\bf d_{\it{i}}}(1+{\bf a_{\it{i}}'}).$$
Notice that at $t=0$ we have 
$$\gamma_{n-1}(0)=\la v,\nu_i \ra = {\bf d_{\it{i}}}(1+{\bf a_{\it{i}}}).$$ 
On the other hand, by construction of $u_{n-1}$,
\begin{align}
\la \gamma_{n-1}(t), \nu_i \ra &= \la -tu_{n-1}+v, \nu_i \ra \\
&= -t\la \csc(\theta_{ij})\nu_i+\cot(\theta_{ij})\nu_j, \nu_i \ra+{\bf d_{\it{i}}}(1+{\bf a_{\it{i}}})= -t\sin(\theta_{ij})+{\bf d_{\it{i}}}(1+{\bf a_{\it{i}}})
\end{align}
so that at $t^+ = \csc(\theta_{ij})\cdot {\bf d_{\it{i}}}({\bf a_{\it{i}}}-{\bf a_{\it{i}}'})>0$ we have 
$$\gamma_{n-1}(t^+) = {\bf d_{\it{i}}}(1+{\bf a_{\it{i}}'}).$$
So, the maximal interval is $[0,\csc(\theta_{ij})\cdot {\bf d_{\it{i}}}({\bf a_{\it{i}}}-{\bf a_{\it{i}}'})]$. It follows that $E$ is isometric to a box in $\mathbb{R}^{n-1}$ having $n-2$ sides with length $2M_{\Phi}^{\bf a}$ and one side of length $\csc(\theta_{ij})\cdot {\bf d_{\it{i}}}({\bf a_{\it{i}}}-{\bf a_{\it{i}}'})$, as desired.
\end{proof}
We are now ready to generally bound the $L^1$ norm of parallel facets under a parallel perturbation.
\begin{proof}[Proof of Proposition 3.1] First notice that if ${\bf a}$ and ${\bf a'}$ are a parallel perturbation in the direction $\nu_i$ then for some $C(n,K)>0$:
\begin{align}
\mathcal{H}^{n-1}(F_j^{\bf a}\Delta F_j^{\bf a'}) &=0, \ \ \ \ \ \ \ \ \ \ \ \ \ \ \ \ \ \ \ \ \ \ \ \ \ \ \text{if} \ j\in \mathscr{N}^{\bf a}(i)^c\cap \mathscr{N}^{\bf a'}(i)^c\ \text{and} \label{estimate null}\\
\mathcal{H}^{n-1}(F_j^{\bf a}\Delta F_j^{\bf a'}) &\leq CM_{\Phi}^{n-1}\|{\bf a}-{\bf a'}\|_{\ell^{\infty}}, \ \ \ \text{if} \ j\in \mathscr{N}^{\bf a}(i)\cup \mathscr{N}^{\bf a'}(i), \ j\neq i \label{estimate affected}
\end{align}
by Lemma \ref{Symmetric Difference Perturbed Other Facet Bound}. So, we are only missing an $\ell^{\infty}$ bound on $\mathcal{H}^{n-1}(F_i^{\bf a}\Delta (F_i^{\bf a'}+u_i))$. However, by Lemma \ref{Symmetric Difference Perturbed Parallel Facet Bound}, we can estimate this in terms of the contributions from $j\neq i$, so that
\begin{equation}
\mathcal{H}^{n-1}(F_i^{\bf a}\Delta (F_i^{\bf a'}+u_i)) \leq \sum_{j\neq i}\mathcal{H}^{n-1}(F_j^{\bf a}\Delta F_j^{\bf a'}) \leq  CM_{\Phi}^{n-1}\|{\bf a}-{\bf a'}\|_{\ell^{\infty}}.\label{estimate perturb}
\end{equation}
Now given generic $K^{\bf a}, K^{\bf a'}$ we set ${\bf a_0} = {\bf a'}$ and define $\bf a_i$ for $i=1,...,N$ by
$${\bf a_{i,\it{j}}}-{\bf a_{i-1,\it{j}}} = ({\bf a_{\it{i}}}-{\bf a_{\it{i}}'})\delta_{ij}, \ \ \ j=1,...,N.$$
Note that ${\bf a_i}$ and ${\bf a_{i-1}}$ are a parallel perturbation in the direction $\nu_i$. So we can apply the estimates \eqref{estimate null}, \eqref{estimate affected}, and \eqref{estimate perturb} as above for each pair $K^{\bf a_{i-1}}$ and $K^{\bf a_i}$. Defining $u_i = {\bf d_{\it{i}}}({\bf a_{\it{i}}}-{\bf a_{\it{i}}'})\nu_i$,
\begin{align}
\mathcal{H}^{n-1}(F_i^{\bf a}\Delta (F_i^{\bf a'}+u_i)) & \leq \sum_{j=1}^{i-1}\mathcal{H}^{n-1}((F_i^{\bf a_j}+u_i)\Delta (F_i^{\bf a_{j-1}}+u_i))+\sum_{j=i+1}^{N}\mathcal{H}^{n-1}(F_i^{\bf a_j}\Delta F_i^{\bf a_{j-1}}) \\
& \ \ \ \ + \mathcal{H}^{n-1}(F_i^{\bf a_i}\Delta (F_i^{\bf a_{i-1}}+u_i)) \\
&\leq \mathcal{H}^{n-1}(F_i^{\bf a_i}\Delta (F_i^{\bf a_{i-1}}+u_i))+\sum_{j\neq i}\mathcal{H}^{n-1}(F_i^{\bf a_j}\Delta F_i^{\bf a_{j-1}}). \label{initial facet bound parallel deform}
\end{align}
By \eqref{estimate perturb}, the first term in \eqref{initial facet bound parallel deform} is estimated as
\begin{equation}\label{facet bound parallel deform 1}
\mathcal{H}^{n-1}(F_i^{\bf a_i}\Delta (F_i^{\bf a_{i-1}}+u_i)) \leq  CM_{\Phi}^{n-1}\|{\bf a}-{\bf a'}\|_{\ell^{\infty}}
\end{equation}
while the latter term in \eqref{initial facet bound parallel deform} is again estimated using \eqref{estimate null} and \eqref{estimate affected} as
\begin{equation}\label{facet bound parallel deform 2}
\sum_{j\neq i}\mathcal{H}^{n-1}(F_i^{\bf a_j}\Delta F_i^{\bf a_{j-1}}) \leq CM_{\Phi}^{n-1}\|{\bf a}-{\bf a'}\|_{\ell^{\infty}}.
\end{equation}
Together, \eqref{facet bound parallel deform 1} and \eqref{facet bound parallel deform 2} show the claim.
\end{proof}

\subsection{Strong quantitative stability for parallel polytopes}
Our goal in this section is to prove a version of Theorem \ref{Main Result} applicable to polytopes $K^{\bf a}$ obtained from Theorem \ref{Replacement}. This is achieved if we can prove the result among the class $\mathcal{C}(K)\subset \mathcal{C}_{\text{par}}(K)$ of parallel polytopes to $K$ with the same volume as $K$. Precisely, we prove:
\begin{prop}\label{Result for Parallel Polytopes} There exists an $a_0(n,K)>0$ and a constant $C(n,K)>0$ such that whenever $K^{\bf a}\in \mathcal{C}(K)$ with $\|{\bf a}\|_{\ell^{\infty}}\leq a_0$, and $|K \Delta K^{\bf a}| = \inf\{|K\Delta (K^{\bf a}+x)| \ | \ x\in\mathbb{R}^n\}$, it holds
$$\beta_{\Phi}(K^{\bf a})^2 \leq C\delta_{\Phi}(K^{\bf a}).$$
\end{prop}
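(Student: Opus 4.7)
The plan is to combine the sharp Wulff stability~\eqref{sharp stability} with a direct geometric upper bound $\beta_{\Phi}(K^{\bf a})^2 \leq C\|{\bf a}\|_{\ell^{\infty}}^2$. Since $|K^{\bf a}|=|K|$, the Fraenkel-optimality hypothesis identifies $\alpha_{\Phi}(K^{\bf a}) = |K\Delta K^{\bf a}|/|K|$, and by Lemma~\ref{Equivalence}, $\|{\bf a}\|_{\ell^{\infty}} \leq C|K\Delta K^{\bf a}|$. Combining these with~\eqref{sharp stability} yields $\|{\bf a}\|_{\ell^{\infty}}^2 \leq C\delta_{\Phi}(K^{\bf a})$, so it suffices to establish $\beta_{\Phi}(K^{\bf a})^2 \leq C\|{\bf a}\|_{\ell^{\infty}}^2$.

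For this, I would plug the candidate $y=0$ into~\eqref{gamma} (which provides a lower bound for $\gamma_{\Phi}$). The $1$-homogeneity of $f_*$ together with the divergence theorem gives $(n-1)\int_{K^{\bf a}} dx/f_*(x) = \int_{\partial K^{\bf a}}\la x,\nu\ra/f_*(x) \, d\mathcal{H}^{n-1}$, yielding after decomposition over facets
\begin{equation*}
\beta_{\Phi}(K^{\bf a})^2 \leq \frac{1}{\Phi(K)} \sum_{i\in\mathscr{I}} {\bf d_{\it{i}}} \int_{F_i^{\bf a}} \frac{f_*(x) - (1+{\bf a_{\it{i}}})}{f_*(x)} \, d\mathcal{H}^{n-1}(x).
\end{equation*}
On $F_i^{\bf a}$, the lower bound $f_*(x) \geq \la x,\nu_i\ra/{\bf d_{\it{i}}} = 1+{\bf a_{\it{i}}}$ and the upper bound $f_*(x) \leq 1+\|{\bf a}\|_{\ell^{\infty}}$ (forced by $f_*^{\bf a}(x)=1$ on $\partial K^{\bf a}$) make the integrand nonnegative and pointwise $O(\|{\bf a}\|_{\ell^{\infty}})$. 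Crucially, the integrand vanishes exactly on the "good" set $F_i^{\bf a}\cap(1+{\bf a_{\it{i}}})K$, where the defining maximum of $f_*$ is attained at index $i$.

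It then remains to bound $\mathcal{H}^{n-1}(F_i^{\bf a}\setminus(1+{\bf a_{\it{i}}})K)$. This "bad" set decomposes, up to $\mathcal{H}^{n-1}$-null sets, as a union over $j\in\mathscr{N}^{\bf a}(i)$ with ${\bf a_{\it{j}}}>{\bf a_{\it{i}}}$ of strips $S_j := F_i^{\bf a}\cap\{{\bf d_{\it{j}}}(1+{\bf a_{\it{i}}}) < \la x,\nu_j\ra \leq {\bf d_{\it{j}}}(1+{\bf a_{\it{j}}})\}$. Within the hyperplane $\Sigma_{\nu_i}^{1+{\bf a_{\it{i}}}}$, each $S_j$ sits between two parallel $(n-2)$-dimensional affine subspaces with normal separation ${\bf d_{\it{j}}}({\bf a_{\it{j}}}-{\bf a_{\it{i}}})\csc(\theta_{ij})$, so a slab-geometry calculation analogous to the one in Lemma~\ref{Symmetric Difference Perturbed Other Facet Bound} bounds $\mathcal{H}^{n-1}(S_j) \leq CM_{\Phi}^{n-1}\|{\bf a}\|_{\ell^{\infty}}$ uniformly for $\|{\bf a}\|_{\ell^{\infty}}\leq a_0$ (using Lemma~\ref{Parallel Polytopes Exist} to control the combinatorics of $K^{\bf a}$). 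Multiplying this measure bound by the pointwise $O(\|{\bf a}\|_{\ell^{\infty}})$ bound on the integrand and summing in $i$ gives $\beta_{\Phi}(K^{\bf a})^2 \leq C\|{\bf a}\|_{\ell^{\infty}}^2$, which combined with the first reduction completes the proof.

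\textbf{Main obstacle.} The delicate step is the uniform slab-geometry bound on each $\mathcal{H}^{n-1}(S_j)$ with constants depending only on $n$ and $K$. This requires $\mathscr{N}^{\bf a}(i)$ and the edge incidences $F_i^{\bf a}\cap F_j^{\bf a}$ to remain stable for small $\|{\bf a}\|_{\ell^{\infty}}$, which is exactly what Lemma~\ref{Parallel Polytopes Exist} and Corollary~\ref{Asymptotics of Parallel Facet Areas} ensure.
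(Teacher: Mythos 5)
Your proposal is correct, and it takes a genuinely different route from the paper's proof. The paper also tests the center $y=0$ and also closes with the same reduction $\|{\bf a}\|_{\ell^{\infty}}^2\leq C\delta_{\Phi}(K^{\bf a})$ (this is exactly \eqref{delta estimate O(a^2)} in Lemma \ref{Delta Estimates}, proved from \eqref{sharp stability} and Lemma \ref{Equivalence} just as you do), but the core estimate is organized differently: the paper converts $\int_{K}dx/f_* - \int_{K^{\bf a}}dx/f_*$ into the radial co-area integrals $A$ and $B$ of \eqref{define A and B} over the scaled facets $rF_i$ of $K$, and these are \emph{not} individually $O(\|{\bf a}\|_{\ell^{\infty}}^2)$ --- each carries a first-order term, and the proof must show $A-B\leq-\sum_i{\bf d_{\it{i}}}{\bf a_{\it{i}}}{\bf m_{\it{i}}^a}+O(\|{\bf a}\|_{\ell^{\infty}}^2)$ so that the first-order part recombines into exactly $n|K|\delta_{\Phi}(K^{\bf a})$ via the volume-constraint identity \eqref{delta estimate O(a)}. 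That cancellation is what forces the paper to invoke Proposition \ref{Symmetric Difference Perturbed Facet Bound} and Corollary \ref{Asymptotics of Parallel Facet Areas} (the asymptotics ${\bf m_{\it{i}}^a}={\bf m_{\it{i}}}+O(\|{\bf a}\|_{\ell^{\infty}})$). Your version instead keeps the error as a surface integral over $\partial K^{\bf a}$, where the integrand ${\bf d_{\it{i}}}\bigl(f_*(x)-(1+{\bf a_{\it{i}}})\bigr)/f_*(x)$ is nonnegative by Fenchel's inequality and genuinely second order: pointwise $O(\|{\bf a}\|_{\ell^{\infty}})$ and supported on strips of $\mathcal{H}^{n-1}$-measure $O(\|{\bf a}\|_{\ell^{\infty}})$. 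This is sign-definite, bypasses the first-order bookkeeping and the facet-area asymptotics entirely, and needs only the slab-geometry computation that the paper isolates in Lemma \ref{Symmetric Difference Perturbed Other Facet Bound} and reuses in Lemma \ref{Transverse Estimate}. Two small remarks: (i) your ``main obstacle'' is not actually an obstacle --- you need no combinatorial stability of $\mathscr{N}^{\bf a}(i)$, since you may take the union of strips over \emph{all} $j$ with ${\bf a_{\it{j}}}>{\bf a_{\it{i}}}$ (each is bounded by the slab estimate with constant $\csc(\theta_{ij})$ depending only on the fixed normals of $K$, and the case $\nu_j=-\nu_i$ gives an empty strip); (ii) your displayed inequality is just the definition \eqref{anisotropic oscillation} of $\beta_{\Phi}^2$ evaluated at $y=0$, so even the divergence-theorem step can be skipped.
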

Before proving Proposition \ref{Result for Parallel Polytopes} let us introduce some notation and technical lemmas. Set
$${\bf m} = (\mathcal{H}^{n-1}(F_1),...,\mathcal{H}^{n-1}(F_N)), \ \ {\bf m^a} = (\mathcal{H}^{n-1}(F_1^{\bf a}),...,\mathcal{H}^{n-1}(F_N^{\bf a})).$$
In terms of these quantities, the following initial estimate on $\delta_{\Phi}(K^{\bf a})$ is easy to show.
\begin{lem}\label{Delta Estimates} Let $K^{\bf a}\in \mathcal{C}(K)$. Then,
\begin{equation}\label{delta estimate O(a)}
\delta_{\Phi}(K^{\bf a}) = -\frac{1}{n|K|}\sum_{i=1}^N {\bf d_{\it{i}}}{\bf a_{\it{i}}}{\bf m_{\it{i}}^a}.
\end{equation}
Furthermore, there exists a $C(n,K)>0$ such that if $|K \Delta K^{\bf a}| = \inf\{|K\Delta (K^{\bf a}+x)| \ | \ x\in\mathbb{R}^n\}$ then
\begin{equation}\label{delta estimate O(a^2)}
\delta_{\Phi}(K^{\bf a}) \geq C(n,K)\|{\bf a}\|_{\ell^{\infty}}^2.
\end{equation}
We write $\delta_{\Phi}(K^{\bf a}) \geq O(\|{\bf a}\|_{\ell^{\infty}}^2)$ to denote this property.
\end{lem}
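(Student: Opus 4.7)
The proof splits cleanly into the identity \eqref{delta estimate O(a)} and the quadratic lower bound \eqref{delta estimate O(a^2)}, and I would handle them in that order.

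For the identity, I would start from the simplification $\delta_{\Phi}(K^{\bf a}) = (\Phi(K^{\bf a}) - n|K|)/(n|K|)$, which is valid because $|K^{\bf a}|=|K|$ and because the Wulff shape satisfies $\Phi(K)=n|K|$ (this last fact itself follows from the cone-decomposition formula $n|K|=\sum_i {\bf d_{\it i}}{\bf m_{\it i}}$ applied to $K$ together with $\Phi(K)=\sum_i {\bf d_{\it i}}{\bf m_{\it i}}$). Using $\Phi(K^{\bf a})=\sum_i {\bf d_{\it i}}{\bf m_{\it i}^a}$ together with the analogous cone-decomposition formula for $K^{\bf a}$, which has already appeared as \eqref{continuity of Fi}, namely
$$n|K^{\bf a}| = \sum_{i\in\mathscr{I}} {\bf d_{\it i}}(1+{\bf a_{\it i}}){\bf m_{\it i}^a},$$
and invoking once more $|K^{\bf a}|=|K|$, I can solve for $\Phi(K^{\bf a}) = n|K| - \sum_i {\bf d_{\it i}}{\bf a_{\it i}}{\bf m_{\it i}^a}$. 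Substituting this back into the expression for $\delta_{\Phi}(K^{\bf a})$ gives \eqref{delta estimate O(a)}.

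For the quadratic lower bound, my plan is to chain three previously established inequalities. First, I apply the sharp quantitative Wulff inequality of \cite{FMP}, recorded above as \eqref{sharp stability}, to $E=K^{\bf a}$: this yields $\delta_{\Phi}(K^{\bf a})\geq \alpha_{\Phi}(K^{\bf a})^2/C_1$ for some $C_1=C_1(n,K)>0$. Second, the hypothesis $|K\Delta K^{\bf a}|=\inf_x|K\Delta(K^{\bf a}+x)|$, combined with the observation $|K\Delta(K^{\bf a}+x)|=|K^{\bf a}\Delta(K-x)|$ and with $|K^{\bf a}|=|K|$ (so $r=1$ in \eqref{anisotropic asymmetry}), identifies the infimum in $\alpha_{\Phi}(K^{\bf a})$ with $|K\Delta K^{\bf a}|/|K|$. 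Third, Lemma \ref{Equivalence} applied with ${\bf a}'={\bf 0}$ gives $\|{\bf a}\|_{\ell^\infty}\leq C_2|K\Delta K^{\bf a}|$ for some $C_2=C_2(n,K)>0$. Composing these three estimates yields $\delta_{\Phi}(K^{\bf a})\geq (C_1 C_2^2|K|^2)^{-1}\|{\bf a}\|_{\ell^\infty}^2$, which is \eqref{delta estimate O(a^2)} with $C(n,K)=(C_1 C_2^2|K|^2)^{-1}$.

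I do not anticipate any serious obstacle: the first assertion is an algebraic identity that reduces to the Schneider cone decomposition, and the quadratic bound follows by a direct concatenation of the FMP sharp stability with the equivalence of the $\ell^\infty$ and $L^1$ metrics on $\mathcal{C}_{\text{par}}(K)$. The only small subtlety worth spelling out is the translation-invariance check that turns the minimization hypothesis into the simple formula $\alpha_{\Phi}(K^{\bf a})=|K\Delta K^{\bf a}|/|K|$; beyond that, the argument is essentially a bookkeeping of constants depending only on $n$ and $K$.
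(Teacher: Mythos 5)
Your proposal is correct and follows essentially the same route as the paper: the identity \eqref{delta estimate O(a)} is obtained from the cone decomposition of $|K^{\bf a}|$ combined with the volume constraint $|K^{\bf a}|=|K|$, and the quadratic lower bound \eqref{delta estimate O(a^2)} is the same concatenation of \eqref{sharp stability} with Lemma \ref{Equivalence}, yielding the identical constant $C=1/(C_1C_2^2|K|^2)$. No gaps.
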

The latter, \eqref{delta estimate O(a^2)}, says precisely that $\delta_{\Phi}(K^{\bf a})$ is non-degenerate with respect to $\|{\bf a}\|_{\ell^{\infty}}$.
\begin{proof} First we can express the anisotropic perimeter and volume of $K$ and $K^{\bf a}$ in terms of the above quantities as 
$$\Phi(K) = \sum_{i=1}^{N}{\bf d_{\it{i}}}{\bf m_{\it{i}}}, \ \ \ \Phi(K^{\bf a}) = \sum_{i=1}^{N}{\bf d_{\it{i}}}{\bf m_{\it{i}}^a}, \ \ \ |K| = \frac{1}{n}\sum_{i=1}^{N}{\bf d_{\it{i}}}{\bf m_{\it{i}}}, \ \ \ |K^{\bf a}| = \frac{1}{n}\sum_{i=1}^{N}{\bf d_{\it{i}}}(1+{\bf a_{\it{i}}}){\bf m_{\it{i}}^a}.$$
Since $K^{\bf a}\in \mathcal{C}(K)$ we have by definition $|K^{\bf a}|=|K|$. Applying this volume constraint yields
\begin{equation}\label{volume constraint}
\frac{1}{n}\sum_{i=1}^N {\bf d_{\it{i}}}{\bf m_{\it{i}}} = |K|=|K^{\bf a}|=\frac{1}{n}\sum_{i=1}^N{\bf d_{\it{i}}}{\bf m_{\it{i}}^a} + \frac{1}{n}\sum_{i=1}^N {\bf d_{\it{i}}}{\bf a_{\it{i}}}{\bf m_{\it{i}}^a}.
\end{equation}
Next, by \eqref{volume constraint} and the definition of $\delta_{\Phi}(K^{\bf a})$,
$$n|K|\delta_{\Phi}(K^{\bf a}) = \Phi(K^{\bf a})- \Phi(K) = \sum_{i=1}^N {\bf d_{\it{i}}}({\bf m_{\it{i}}^a}-{\bf m_{\it{i}}})=-\sum_{i=1}^N {\bf d_{\it{i}}}{\bf a_{\it{i}}}{\bf m_{\it{i}}^a},$$
which is exactly \eqref{delta estimate O(a)}.

As for \eqref{delta estimate O(a^2)}, by the quantitative stability with the Fraenkel asymmetry, \eqref{sharp stability}, we have that there exists $C_1(n)>0$ such that
\begin{equation}\label{initial lower bound deficit}
\frac{|K\Delta K^{\bf a}|^2}{|K|^2} = \frac{1}{|K|^2}\inf\{|K\Delta (K^{\bf a}+x)| \ | \ x\in \mathbb{R}^n\}^2 = \alpha_{\Phi}(K^{\bf a})^2 \leq C_1\delta_{\Phi}(K^{\bf a}),
\end{equation}
where we have used the fact that $K^{\bf a}\in \mathcal{C}(K)$ satisfies $|K^{\bf a}|=|K|$. Next by Lemma \ref{Equivalence} there exists a constant $C_2(n,K)>0$ such that $\|{\bf a}\|_{\ell^{\infty}}\leq C_2|K\Delta K^{\bf a}|$. Combined with \eqref{initial lower bound deficit} we have $C\|{\bf a}\|_{\ell^{\infty}}^2 \leq \delta_{\Phi}(K^{\bf a})$, with $C=1/(C_1C_2^2|K|^2)$, as desired. 
\end{proof}
Finally we provide an initial estimate on $\beta_{\Phi}(K^{\bf a})$.
\begin{lem}\label{Upper Bound on Beta} Let $K^{\bf a}\in \mathcal{C}(K)$. Then
\begin{equation}
\beta_{\Phi}(K^{\bf a})^2 \leq \delta_{\Phi}(K^{\bf a}) + \frac{n-1}{n|K|}\left(A-B\right)
\end{equation}
where
\begin{equation}\label{define A and B}
A = \sum_{i\in \mathscr{I}}\int_0^{1} \frac{{\bf d_{\it{i}}}}{r}\mathcal{H}^{n-1}(rF_i \cap (K\setminus K^{\bf a})) \ dr, \ \ \ \ \ B = \sum_{i\in \mathscr{I}}\int_1^{\infty} \frac{{\bf d_{\it{i}}}}{r}\mathcal{H}^{n-1}(rF_i \cap (K^{\bf a}\setminus K)) \ dr.
\end{equation}
\end{lem}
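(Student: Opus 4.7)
The proof exploits that this lemma is essentially an equality, with the slack coming from the definition of $\gamma_\Phi$ as a supremum. The plan is first to lower bound $\gamma_\Phi(K^{\bf a})$ by taking $y=0$ in \eqref{gamma}, which gives $\gamma_\Phi(K^{\bf a}) \geq \int_{K^{\bf a}}1/f_*(x)\,dx$. Since $|K^{\bf a}| = |K|$, the formula \eqref{gamma} for $\beta_\Phi(K^{\bf a})^2$ has denominator $n|K|$, and combining with $\delta_\Phi(K^{\bf a}) = (\Phi(K^{\bf a})-n|K|)/(n|K|)$, it will suffice to prove
\begin{equation*}
\int_K \frac{1}{f_*(x)}\,dx - \int_{K^{\bf a}} \frac{1}{f_*(x)}\,dx = A - B.
\end{equation*}

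The key step is a cone decomposition. I split $\mathbb{R}^n$ up to a negligible set into the cones $C_i := \{ry : r\geq 0,\,y\in F_i\}$ over the facets of $K$: these cover $\mathbb{R}^n$ and overlap only on lower dimensional pieces because $0\in\operatorname{Int}(K)$ and $K$ is a convex polytope. Parametrizing $C_i$ by $(r,y) \in (0,\infty)\times F_i$ via $(r,y) \mapsto ry$, the Jacobian is ${\bf d_{\it{i}}}\,r^{n-1}$, since $y$ lies on a hyperplane at signed distance ${\bf d_{\it{i}}}$ from $0$ and the tangent directions of $F_i$ scale by $r$. Combined with the $1$-homogeneity of $f_*$ and the fact that $f_*\equiv 1$ on $\partial K$ (so that $f_*(ry)=r$ whenever $y\in F_i$), a Fubini/layer-cake rearrangement yields for any measurable $E$ that
\begin{equation*}
\int_{C_i \cap E} \frac{1}{f_*(x)}\,dx = \int_0^\infty \frac{{\bf d_{\it{i}}}}{r}\,\mathcal{H}^{n-1}(rF_i \cap E)\,dr.
\end{equation*}

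Applied to $E=K$, the right-hand side simplifies to $\sum_i{\bf d_{\it{i}}}\mathcal{H}^{n-1}(F_i)/(n-1) = \Phi(K)/(n-1) = n|K|/(n-1)$, since $rF_i \subset K$ for $r\leq 1$ while $rF_i \cap K$ is lower dimensional for $r>1$. Subtracting the same identity with $E=K^{\bf a}$ and summing in $i$, the crucial observation is the vanishing $rF_i \cap (K^{\bf a}\setminus K) = \varnothing$ for $r\leq 1$ and $rF_i \cap (K\setminus K^{\bf a}) = \varnothing$ for $r\geq 1$, because in either range $rF_i$ either sits inside $K$ or meets $K$ only in a lower dimensional set. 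What remains of the difference is precisely $B - A$ in the notation of \eqref{define A and B}, closing the argument. The only genuine obstacle is organizing the cone parametrization and verifying its Jacobian cleanly; after that the manipulation is a routine bookkeeping exercise, and the choice $y=0$ in the supremum for $\gamma_\Phi$ is exactly what turns the underlying equality into the inequality claimed.
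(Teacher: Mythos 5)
Your proposal is correct and follows essentially the same route as the paper: lower bound $\gamma_{\Phi}(K^{\bf a})$ by testing the supremum at $y=0$, reduce to the difference $\int_{K}f_*^{-1}-\int_{K^{\bf a}}f_*^{-1}$, and evaluate it by radial slicing over the facets, with the support restrictions $rF_i\subset K$ for $r<1$ and $rF_i\cap K=\emptyset$ for $r\geq 1$ producing exactly $A-B$. The only cosmetic difference is that you derive the slicing identity by hand via the cone parametrization and its Jacobian ${\bf d_{\it{i}}}\,r^{n-1}$, whereas the paper invokes the weighted anisotropic coarea formula with $u=f_*$ and $g(r)=1/r$; these are the same computation.
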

\begin{proof}
Recall from \eqref{gamma} that we can write $\beta_{\Phi}$ in terms of $\gamma_{\Phi}$,
$$\beta_{\Phi}(K^{\bf a})^2 = \frac{\Phi(K^{\bf a})-(n-1)\gamma_{\Phi}(K^{\bf a})}{n|K|^{1/n}|K^{\bf a}|^{(n-1)/n}} = \frac{\Phi(K^{\bf a})}{n|K|} - \frac{(n-1)\gamma_{\Phi}(K^{\bf a})}{n|K|},$$
where $\gamma_{\Phi}(E)$ is defined by
$$\gamma_{\Phi}(E) = \sup_{y\in \mathbb{R}^n}\int_E \frac{1}{f_*(x-y)} \ dx.$$
Testing at the origin we get
\begin{align}
\beta_{\Phi}(K^{\bf a})^2 &\leq  \frac{\Phi(K^{\bf a})}{n|K|}- \frac{n-1}{n|K|}\int_{K^{\bf a}} \frac{1}{f_*(x)} \ dx \\
&= \left(\frac{\Phi(K^{\bf a})-\Phi(K)}{n|K|}\right) + \frac{\Phi(K)}{n|K|}- \frac{n-1}{n|K|}\int_{K^{\bf a}} \frac{1}{f_*(x)} \ dx.
\end{align}
Note that $\Phi(K) = (n-1)\gamma_{\Phi}(K)$. Hence,
\begin{align*}
\beta_{\Phi}(K^{\bf a})^2 &\leq \delta_{\Phi}(K^{\bf a}) + \frac{n-1}{n|K|}\left(\int_{K} \frac{1}{f_*(x)} \ dx-\int_{K^{\bf a}} \frac{1}{f_*(x)} \ dx\right)\\
&= \delta_{\Phi}(K^{\bf a}) + \frac{n-1}{n|K|}\left(\int_{K\setminus K^{\bf a}} \frac{1}{f_*(x)} \ dx-\int_{K^{\bf a}\setminus K} \frac{1}{f_*(x)}\ dx\right).
\end{align*}
The weighted anisotropic co-area formula lets us compute these integrals more directly. Indeed, recall that for any Borel $g:\mathbb{R} \to [0,\infty)$ and Lipschitz $u:\mathbb{R}^n\to \mathbb{R}$, and open $\Omega\subset \mathbb{R}^n$ it states that
\begin{equation}\label{anisotropic co-area}
\int_{\Omega} f(\nabla u(x))g(f_*(x)) \ dx = \int_0^{\infty} \Phi(\{u<r\};\Omega)g(r) \ dr,
\end{equation}
where the relative anisotropic perimeter $\Phi(E;\Omega)$ is defined as 
\begin{equation}
\Phi(E;\Omega) = \int_{\partial^* E\cap \Omega} f(\nu_{E}(x)) \ d\mathcal{H}^{n-1}(x).
\end{equation}
In particular, selecting $g(r)=1/r$ and $u(x)=f_*(x)$ we obtain 
\begin{equation}\label{anisotropic co-area applied}
\int_{\Omega} \frac{1}{f_*(x)} \ dx = \int_0^{\infty} \frac{1}{r}\Phi(rK;\Omega) \ dr
\end{equation}
as $K = \{f_*<1\}$ and the Fenchel inequality,
\begin{equation}
\la x,\nu \ra \leq f_*(x)f(\nu)
\end{equation}
implies that $f(\nabla f_*(x))\equiv 1$ for a.e. $x\in \mathbb{R}^n$. For more details see \cite{Neumayer2016}. Applying \eqref{anisotropic co-area applied} with $\Omega=K\setminus K^{\bf a}$ and $\Omega=K^{\bf a}\setminus K$ gives
\begin{align*}
\int_{K\setminus K^{\bf a}} \frac{1}{f_*(x)} \ dx &= \int_0^{\infty} \frac{1}{r}\Phi(rK; K\setminus K^{\bf a}) \ dr = \sum_{i\in \mathscr{I}} \int_0^{1} \frac{{\bf d_{\it{i}}}}{r}\mathcal{H}^{n-1}(rF_i \cap (K\setminus K^{\bf a})) \ dr \\
\int_{K^{\bf a}\setminus K} \frac{1}{f_*(x)}\ dx &= \int_0^{\infty} \frac{1}{r}\Phi(rK; K^{\bf a}\setminus K) \ dr = \sum_{i\in \mathscr{I}}\int_1^{\infty} \frac{{\bf d_{\it{i}}}}{r}\mathcal{H}^{n-1}(rF_i \cap (K^{\bf a}\setminus K)) \ dr.
\end{align*}
In the latter integrals we make use of the fact that $\partial^* K = \Int(F_1)\cup...\cup \Int(F_N)$ and that $\nu_K \equiv \nu_i$ on $\Int(F_i)$, so $f(\nu_K)\equiv {\bf d_{\it{i}}}$ on $\Int(F_i)$. 
\end{proof}
In light of \eqref{define A and B}, to find the appropriate bounds on $A$ and $B$ we effectively only need to study
$$\int_0^1 \frac{1}{r}\mathcal{H}^{n-1}(rF_i \cap (K\setminus K^{\bf a})) \ dr \ \ \ \text{and} \ \ \ \int_1^{\infty} \frac{1}{r}\mathcal{H}^{n-1}(rF_i \cap (K^{\bf a}\setminus K)) \ dr$$
for $i \in \mathscr{I}$. The idea now is that whether or not these integrals contribute to $A$ and $B$ for a particular $i\in \mathscr{I}$ depends on its inclusion in one of the following collections:
\begin{align}
\mathscr{I}^- = \{i\in \mathscr{I} \ | \ {\bf a_{\it{i}}}<0\} \ \ \ \text{and} \ \ \ \mathscr{I}^+ = \mathscr{I}\setminus \mathscr{I}^+. \label{index collections}
\end{align}
To control the first integral we will subdivide these collections so as to estimate $\mathcal{H}^{n-1}(rF_i\cap (K\setminus K^{\bf a}))$ easily. As for the latter integral, we will show that $\mathcal{H}^{n-1}(rF_i\cap (K\setminus K^{\bf a}))$ can be written as $\mathcal{H}^{n-1}(F_i^{\bf a_1}\cap F_i^{\bf a_2})$ for judicious choices of ${\bf a_1}$ and ${\bf a_2}$, then make use of Proposition \ref{Symmetric Difference Perturbed Facet Bound} and Corollary \ref{Asymptotics of Parallel Facet Areas}.

Let us observe the following simple facts. By convexity $rF_i \subset K$ if and only if $0\leq r<1$. On the other hand, 
$K^{\bf a}\subset \{\la x,\nu_i \ra <{\bf d_{\it{i}}}(1+{\bf a_{\it{i}}})\}$ while 
$rF_i \subset \{\la x,\nu_i \ra = r{\bf d_{\it{i}}}\}$. It follows that $rF_i \not\subset K^{\bf a}$ whenever $1+{\bf a_{\it{i}}}\leq r$. 

Focusing on the case $0<r<1$, observe that if $i\in \mathscr{I}^-$ and $1+{\bf a_{\it{i}}}\leq r$ then $rF_i \subset K\setminus K^{\bf a}$, and so we have 
\begin{equation}
\mathcal{H}^{n-1}(rF_i \cap (K\setminus K^{\bf a})) = \mathcal{H}^{n-1}(rF_i) = r^{n-1}{\bf m_{\it{i}}}.
\end{equation}
When $r<1+{\bf a_{\it{i}}}$, for any $i\in \mathscr{I}$, we must be more careful. While it is possible that $rF_i \subset K\setminus K^{\bf a}$ in part of this regime, we will ignore this. Instead we bound $\mathcal{H}^{n-1}(rF_i \cap (K\setminus K^{\bf a}))$ in terms of contributions from other sides, similar to Lemma \ref{Symmetric Difference Perturbed Other Facet Bound}. To start we define
\begin{align}\label{slab}
S_{\nu_j}^{1+{\bf a_{\it{j}}},r}&= \{{\bf d_{\it{j}}}(1+{\bf a_{\it{j}}}) \leq \la x,\nu_j \ra \leq r{\bf d_{\it{j}}} \}\\
&=\left\lbrace \bigg|\la x,\nu_j \ra - \frac{\bf d_{\it{j}}}{2}(1+{\bf a_{\it{j}}}+r)\bigg| \leq \frac{\bf d_{\it{j}}}{2}|{1+\bf a_{\it{j}}}-r| \right\rbrace
\end{align}
as the slab between the parallel hyperplanes $\Sigma_{\nu_j}^{1+{\bf a_{\it{j}}}}$ and $\Sigma_{\nu_j}^r$ whenever $1+{\bf a_{\it{j}}}<r$. Observe now that for any $i\in \mathscr{I}$  and $0<r<1+{\bf a_{\it{i}}}$ if $rF_i \cap (K\setminus K^{\bf a})\neq \emptyset$ then there exists a (minimal) collection of indices $\mathscr{I}_i^{\bf a}(r)\subset \mathscr{I}^-$ such that
\begin{equation}\label{slab decomposition}
rF_i \cap (K\setminus K^{\bf a})\subset \bigcup_{j\in \mathscr{I}_i^{\bf a}(r)} S_{\nu_j}^{1+{\bf a_{\it{j}}},r}.
\end{equation}
See Figure \ref{Scaling} for more information. Indeed, taking $x\in rF_i \cap (K\setminus K^{\bf a})$ there exists $j\in \mathscr{I}$ such that $\la x,\nu_j \ra \geq {\bf d_{\it{j}}}(1+{\bf a_{\it{j}}})$. On the other hand, since $x\in rF_i$ we have $\la x,\nu_{j'}\ra \leq r{\bf d_{\it{j'}}}$ for any $j'\in \mathscr{I}$. It follows that $x\in S_{\nu_j}^{1+{\bf a_{\it{j}}},r}$. 
\begin{figure}
\hspace{-20pt}
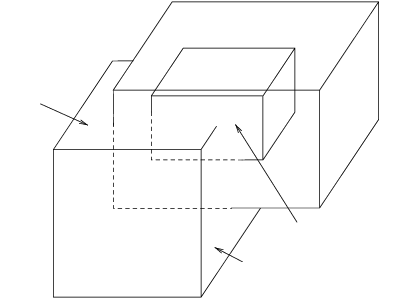
\hspace{20pt}
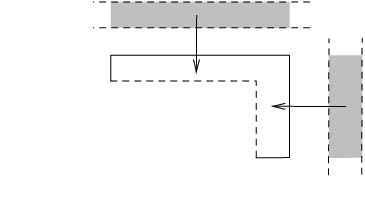
\caption{Left: $K, K^{\bf a},$ and a particular $rK$ with $0<r<1$. Right: We can bound $rF_i\cap (K\setminus K^{\bf a})$ in terms of the contributions between the parallel hyperplanes $S_{\nu_j}^{1+{\bf a_{\it{j}}},r}$. In this case, $\mathscr{I}_i^{\bf a}(r)=\{j_1,j_2\}$.}\label{Scaling}
\end{figure}

The following lemma shows this decomposition provides a good estimate.
\begin{lem}\label{Transverse Estimate} Let $i\in \mathscr{I}$ and $0<r<\min\{1,1+{\bf a_{\it{i}}}\}$ be such that  $rF_i\cap (K\setminus K^{\bf a})\neq \emptyset$. Then there exists $C(n,K)>0$ such that
\begin{equation}
\mathcal{H}^{n-1}(rF_i \cap (K\setminus K^{\bf a})) \leq C\sum_{j \in \mathscr{I}_i^{\bf a}(r)} (r-(1+{\bf a_{\it{j}}})).
\end{equation}
\end{lem}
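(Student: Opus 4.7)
The plan is to combine the slab decomposition \eqref{slab decomposition} with a direct strip-width computation in the affine hyperplane $\Sigma_{\nu_i}^r$ containing $rF_i$. First, by subadditivity of $\mathcal{H}^{n-1}$ applied to \eqref{slab decomposition},
\[
\mathcal{H}^{n-1}(rF_i\cap(K\setminus K^{\bf a}))\leq \sum_{j\in\mathscr{I}_i^{\bf a}(r)}\mathcal{H}^{n-1}\bigl(rF_i\cap S_{\nu_j}^{1+{\bf a_{\it{j}}},r}\bigr),
\]
so it suffices to prove $\mathcal{H}^{n-1}(rF_i\cap S_{\nu_j}^{1+{\bf a_{\it{j}}},r})\leq C(n,K)\,(r-(1+{\bf a_{\it{j}}}))$ for each $j\in\mathscr{I}_i^{\bf a}(r)$. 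Note that any such $j$ satisfies $1+{\bf a_{\it{j}}}\leq r<1$: the second inequality is the hypothesis on $r$, while the first is forced by nonemptiness of $rF_i\cap S_{\nu_j}^{1+{\bf a_{\it{j}}},r}$, since on $rF_i$ one has $\la x,\nu_j\ra\leq r{\bf d_{\it{j}}}$ from $rF_i\subset rK$. In particular $j\in\mathscr{I}^-$ and each term in the target sum is non-negative.

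Second, I would compute the strip width explicitly, mirroring the co-normal construction in the proof of Lemma \ref{Symmetric Difference Perturbed Other Facet Bound}. Writing $x\in\Sigma_{\nu_i}^r$ as $x=r{\bf d_{\it{i}}}\nu_i+y$ with $y\in\nu_i^{\perp}$, the defining condition for $S_{\nu_j}^{1+{\bf a_{\it{j}}},r}$ reduces to a two-sided bound on $\la y,p_j\ra$, where $p_j=\nu_j-\la\nu_i,\nu_j\ra\nu_i$ has norm $\sin\theta_{ij}$ (using $\cos\theta_{ij}=-\la\nu_i,\nu_j\ra$). Rescaling by $1/\sin\theta_{ij}$ exhibits $rF_i\cap S_{\nu_j}^{1+{\bf a_{\it{j}}},r}$ as contained in an $(n-1)$-dimensional slab of width ${\bf d_{\it{j}}}(r-(1+{\bf a_{\it{j}}}))/\sin\theta_{ij}$ inside $\Sigma_{\nu_i}^r$.

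Third, since $rF_i\subset rK\subset K\subset B_{M_{\Phi}}(0)$, this slab intersected with $rF_i$ lies in an $(n-1)$-dimensional box of the above width with the remaining $n-2$ sides of length at most $2M_{\Phi}$, hence
\[
\mathcal{H}^{n-1}\bigl(rF_i\cap S_{\nu_j}^{1+{\bf a_{\it{j}}},r}\bigr)\leq \frac{{\bf d_{\it{j}}}(2M_{\Phi})^{n-2}}{\sin\theta_{ij}}\,\bigl(r-(1+{\bf a_{\it{j}}})\bigr).
\]
Summing over $j\in\mathscr{I}_i^{\bf a}(r)$ and absorbing the prefactor into $C(n,K):=(2M_{\Phi})^{n-2}\max_{i\neq j}{\bf d_{\it{j}}}\csc\theta_{ij}$ (which depends only on $n$ and $K$) yields the conclusion.

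The main obstacle is really only the geometric bookkeeping in step two: identifying the correct intrinsic co-normal direction in $\Sigma_{\nu_i}^r$ (playing the role of $u_{n-1}$ in the earlier lemma) so that the factor $1/\sin\theta_{ij}$ appears correctly. All other ingredients are either the slab inclusion already recorded in the text or elementary convex-geometric bounds, so no substantial analytic difficulty is anticipated.
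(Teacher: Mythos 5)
Your proposal is correct and follows essentially the same route as the paper: the paper likewise combines the slab decomposition \eqref{slab decomposition} with the box/co-normal estimate of Lemma \ref{Symmetric Difference Perturbed Other Facet Bound} (intersecting with $B_{M_{\Phi}}(0)\cap\Sigma_{\nu_i}^r$) to obtain exactly the bound $\sum_j {\bf d_{\it{j}}}(2M_{\Phi})^{n-2}\csc(\theta_{ij})(r-(1+{\bf a_{\it{j}}}))$. The only difference is that the paper invokes that earlier computation by reference, whereas you carry out the strip-width calculation explicitly with the roles of $\nu_i$ and $\nu_j$ interchanged, which is a fair and correct elaboration.
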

\begin{proof} By choice of $\mathscr{I}_i^{\bf a}(r)$ we have \eqref{slab decomposition}, implying
\begin{align}
rF_i \cap (K\setminus K^{\bf a})\subset \left(\bigcup_{j\in \mathscr{I}_i^{\bf a}(r)}S_{\nu_j}^{1+{\bf a_{\it{j}}},r}\right)\cap B_{M_{\Phi}}(0)\cap \Sigma_{\nu_i}^r.
\end{align}
Using the exact same logic as in Lemma \ref{Symmetric Difference Perturbed Other Facet Bound} we estimate
\begin{equation}
\mathcal{H}^{n-1}(rF_i \cap (K\setminus K^{\bf a})) \leq \sum_{j \in \mathscr{I}_i^{\bf a}(r)} {\bf d_{\it{j}}}(2M_{\Phi})^{n-2}\csc(\theta_{ij})(r-(1+{\bf a_{\it{j}}})),
\end{equation}
which is non-negative owing to the fact that $r>1+{\bf a_{\it{j}}}$ whenever $j\in \mathscr{I}_i^{\bf a}(r)$. 
\end{proof}
Before moving to the proof, let us summarize the argument. The goal will be to show that there exists $C(n,K)>0$ such that
\begin{equation}
A \leq -C\sum_{i\in \mathscr{I}^-}{\bf d_{\it{i}}}{\bf a_{\it{i}}}{\bf m_{\it{i}}^a}+O(\|{\bf a}\|_{\ell^{\infty}}^2) \ \ \ \ \text{and} \ \ \  -B \leq -C\sum_{i\in \mathscr{I}^+} {\bf d_{\it{i}}}{\bf a_{\it{i}}}{\bf m_{\it{i}}^a}+O(\|{\bf a}\|_{\ell^{\infty}}^2).
\end{equation}
Indeed, with this we have
\begin{equation}
A-B\leq C\left(-\sum_{i\in \mathscr{I}}{\bf d_{\it{i}}}{\bf a_{\it{i}}}{\bf m_{\it{i}}^a}\right)+O(\|{\bf a}\|_{\ell^{\infty}}^2) \leq C\delta_{\Phi}(K^{\bf a})
\end{equation}
after applying the estimates \eqref{delta estimate O(a)} and \eqref{delta estimate O(a^2)} on $\delta_{\Phi}(K^{\bf a})$. The issue is that some of the terms are $O({\bf a_{\it{i}}})$. In order to apply \eqref{delta estimate O(a)}, they must be paired with ${\bf d_{\it{i}}}{\bf m_{\it{i}}^a}$, and such a term must be represented for each $i\in \mathscr{I}$. Due to the sign, this is only relevant for $i\in \mathscr{I}^+$.   

\begin{proof}[Proof of Proposition \ref{Result for Parallel Polytopes}] $\,$

\noindent
\textit{Step 1: Asymptotics of $A$}. Rewriting $A$, we have
\begin{align}
A &= \sum_{i\in \mathscr{I}^-}\int_0^{1} \frac{{\bf d_{\it{i}}}}{r}\mathcal{H}^{n-1}(rF_i \cap (K\setminus K^{\bf a})) \ dr+\sum_{i\in \mathscr{I}^+}\int_0^{1} \frac{{\bf d_{\it{i}}}}{r}\mathcal{H}^{n-1}(rF_i \cap (K\setminus K^{\bf a})) \ dr. \label{decompose A initial}
\end{align}
Setting
\begin{equation}
r_i = \min\{1,\max\{r>0 \ | \ rF_i \subset K^{\bf a}\}\},
\end{equation}
we continue and write
\begin{align}
A &= \sum_{i\in \mathscr{I}^-} \int_{1+{\bf a_{\it{i}}}}^{1} \frac{{\bf d_{\it{i}}}}{r}\mathcal{H}^{n-1}(rF_i \cap (K\setminus K^{\bf a})) \ dr\\
&\ \ \ \ \ + \sum_{i\in \mathscr{I}^-} \int_{r_i}^{1+{\bf a_{\it{i}}}} \frac{{\bf d_{\it{i}}}}{r}\mathcal{H}^{n-1}(rF_i \cap (K\setminus K^{\bf a})) \ dr+\sum_{i\in \mathscr{I}^+} \int_{r_i}^{1} \frac{{\bf d_{\it{i}}}}{r}\mathcal{H}^{n-1}(rF_i \cap (K\setminus K^{\bf a})) \ dr.
\end{align}
Let $A_1$ denote the latter two terms,
\begin{align}
A_1&:=\sum_{i\in \mathscr{I}^-} \int_{r_i}^{1+{\bf a_{\it{i}}}} \frac{{\bf d_{\it{i}}}}{r}\mathcal{H}^{n-1}(rF_i \cap (K\setminus K^{\bf a})) \ dr+\sum_{i\in \mathscr{I}^+} \int_{r_i}^{1} \frac{{\bf d_{\it{i}}}}{r}\mathcal{H}^{n-1}(rF_i \cap (K\setminus K^{\bf a})) \ dr\\
&=\sum_{i\in \mathscr{I}} \int_{r_i}^{\min\{1,1+{\bf a_{\it{i}}}\}} \frac{{\bf d_{\it{i}}}}{r}\mathcal{H}^{n-1}(rF_i \cap (K\setminus K^{\bf a})) \ dr.
\end{align}
By definition of $r_i$ we have that $rF_i \cap (K\setminus K^{\bf a})\neq \emptyset$ for $r_i<r<\min\{1,1+{\bf a_{\it{i}}}\}$. Hence we can apply Lemma \ref{Transverse Estimate} to estimate
\begin{align}
A_1 \leq C\sum_{i\in \mathscr{I}} \int_{r_i}^{\min\{1,1+{\bf a_{\it{i}}}\}} \frac{{\bf d_{\it{i}}}}{r}\sum_{j \in \mathscr{I}_i^{\bf a}(r)} (r-(1+{\bf a_{\it{j}}})).
\end{align}
Since $j\in \mathscr{I}_i^{\bf a}(r)$ implies that $r>1+{\bf a_{\it{j}}}$ it follows that each term contributes at most for $1+{\bf a_{\it{j}}}<r<\min\{1,1+{\bf a_{\it{i}}}\}$.
Now denote by $\mathscr{I}_i^{\bf a}$ the collection of $j\in \mathscr{I}$ such that $j\in \mathscr{I}_i^{\bf a}(r)$ for some $r\in (1+{\bf a_{\it{j}}},\min\{1,1+{\bf a_{\it{i}}}\})$. We conclude that
\begin{equation}\label{estimate A1}
A_1 \leq  C\sum_{i\in \mathscr{I}}{\bf d_{\it{i}}}\sum_{j\in \mathscr{I}_i^{\bf a}}\int_{1+{\bf a_{\it{j}}}}^{1} \left(1-\frac{1+{\bf a_{\it{j}}}}{r}\right) \ dr = \sum_{i\in \mathscr{I}}{\bf d_{\it{i}}}\sum_{j\in \mathscr{I}_i^{\bf a}}O({\bf a}_j^2) = O(\|{\bf a}\|_{\ell^{\infty}}^2).
\end{equation}
Returning to $A$, we have from \eqref{estimate A1} that
\begin{align}
A &= \sum_{i\in \mathscr{I}^-}\int_{1+{\bf a_{\it{i}}}}^{1} \frac{{\bf d_{\it{i}}}}{r}\mathcal{H}^{n-1}(rF_i \cap (K\setminus K^{\bf a})) \ dr + O(\|{\bf a}\|_{\ell^{\infty}}^2) \\
&= \sum_{i\in \mathscr{I}^-}\int_{1+{\bf a_{\it{i}}}}^{1} {\bf d_{\it{i}}}{\bf m_{\it{i}}}r^{n-2} \ dr + O(\|{\bf a}\|_{\ell^{\infty}}^2)
\end{align}
since again $rF_i \subset K\setminus K^{\bf a}$ for $i\in \mathscr{I}^-$ and $r\in (1+{\bf a_{\it{i}}},1]$. The integral above is $O({\bf a_{\it{i}}})$, so it must be coupled with ${\bf m_{\it{i}}^a}$ in order to use \eqref{delta estimate O(a)}. To rectify this we introduce some additional terms whose error can be well-approximated:
\begin{equation}\label{rewrite A}
A= \sum_{i\in \mathscr{I}^-}\int_{1+{\bf a_{\it{i}}}}^1 \frac{{\bf d_{\it{i}}}{\bf m_{\it{i}}^a}}{r^{n-1}} \ dr + \sum_{i\in \mathscr{I}^-}\int_{1+{\bf a_{\it{i}}}}^1 \left({\bf d_{\it{i}}}{\bf m_{\it{i}}}r^{n-2}-\frac{{\bf d_{\it{i}}}{\bf m_{\it{i}}^a}}{r^{n-1}}\right) \ dr+ O(\|{\bf a}\|_{\ell^{\infty}}^2).
\end{equation}
Let $A_2$ the first sum in  \eqref{rewrite A} and $A_3$ the latter. We estimate $A_2$ as
\begin{equation}
A_2 = \sum_{i\in \mathscr{I}^-}\int_{1+{\bf a_{\it{i}}}}^1 \frac{{\bf d_{\it{i}}}{\bf m_{\it{i}}^a}}{r^{n-1}} \ dr = \sum_{i\in \mathscr{I}^-}\left[-{\bf d_{\it{i}}}{\bf a_{\it{i}}}{\bf m_{\it{i}}^a}+ {\bf m_{\it{i}}^a}O({\bf a}_i^2)\right].
\end{equation}
Next by Corollary \ref{Asymptotics of Parallel Facet Areas} we have ${\bf m_{\it{i}}^a} = {\bf m_{\it{i}}}+O(\|{\bf a}\|_{\ell^{\infty}})$, so that
\begin{equation}\label{estimate A2}
A_2 =  \sum_{i\in \mathscr{I}^-}\left[-{\bf d_{\it{i}}}{\bf a_{\it{i}}}{\bf m_{\it{i}}^a}+ ({\bf m_{\it{i}}}+O(\|{\bf a}\|_{\ell^{\infty}}))O({\bf a}_i^2)\right] = -\sum_{i\in \mathscr{I}^-}{\bf d_{\it{i}}}{\bf a_{\it{i}}}{\bf m_{\it{i}}^a}+ O(\|{\bf a}\|_{\ell^{\infty}}^2).
\end{equation}
We now estimate $A_3$. Adding and subtracting ${\bf d_{\it{i}}}{\bf m_{\it{i}}}/r^{n-1}$, 
\begin{align}
|A_3| &\leq \bigg|\sum_{i\in \mathscr{I}^-}{\bf d_{\it{i}}}\int_{1+{\bf a_{\it{i}}}}^1 \left({\bf m_{\it{i}}}\left(r^{n-2}-\frac{1}{r^{n-1}}\right)+\frac{1}{r^{n-1}}\left({\bf m_{\it{i}}}-{\bf m_{\it{i}}^a}\right)\right) dr\bigg|\\
&\leq \sum_{i\in \mathscr{I}^-}{\bf d_{\it{i}}}{\bf m_{\it{i}}}\int_{1+{\bf a_{\it{i}}}}^1 \left(\frac{1}{r^{n-1}}-r^{n-2}\right) dr+\sum_{i\in \mathscr{I}^-}{\bf d_{\it{i}}}\big|{\bf m_{\it{i}}}-{\bf m_{\it{i}}^a}\big|\int_{1+{\bf a_{\it{i}}}}^1 \frac{1}{r^{n-1}} \ dr.
\end{align}
For the first term we simply have
\begin{equation}\label{estimate A3.1}
\sum_{i\in \mathscr{I}^-}{\bf d_{\it{i}}}{\bf m_{\it{i}}}\int_{1+{\bf a_{\it{i}}}}^1 \left(\frac{1}{r^{n-1}}-r^{n-2}\right) dr = \sum_{i\in \mathscr{I}^-}O({\bf a}_i^2) = O(\|{\bf a}\|_{\ell^{\infty}}^2),
\end{equation}
whereas for the latter we apply Corollary \ref{Asymptotics of Parallel Facet Areas} once more to obtain
\begin{align}
\sum_{i\in \mathscr{I}^-}{\bf d_{\it{i}}}\big|{\bf m_{\it{i}}}-{\bf m_{\it{i}}^a}\big|\int_{1+{\bf a_{\it{i}}}}^1 \frac{1}{r^{n-1}} \ dr &= \sum_{i\in \mathscr{I}^-}O(\|{\bf a}\|_{\ell^{\infty}})\int_{1+{\bf a_{\it{i}}}}^1 \frac{1}{r^{n-1}} \ dr \ \ \ \ \  \\
&= \sum_{i\in \mathscr{I}^-}O(\|{\bf a}\|_{\ell^{\infty}})(-{\bf a_{\it{i}}}+O({\bf a}_i^2)) =  O(\|{\bf a}\|_{\ell^{\infty}}^2), \ \ \ \ \ \ \label{estimate A3.2}
\end{align}
using the fact that ${\bf a_{\it{i}}}<0$. All together, the estimate \eqref{estimate A1} on $A_1$, \eqref{estimate A2} on $A_2$, and \eqref{estimate A3.1} and \eqref{estimate A3.2} on $A_3$ imply that
\begin{equation}
A \leq -\sum_{i\in \mathscr{I}^-}{\bf d_{\it{i}}}{\bf a_{\it{i}}}{\bf m_{\it{i}}^a}+ O(\|{\bf a}\|_{\ell^{\infty}}^2)
\end{equation}
as desired. 

\noindent
\textit{Step 2: Asymptotics of $B$}. Note that $i\in \mathscr{I}^-$ does not contribute to $B$. As mentioned, $rF_i \subset K$ if and only if $0\leq r<1$. So, the only contributions come from $i\in \mathscr{I}^+$, and last at most for $r\in [1,1+{\bf a_{\it{i}}})$ since $K^{\bf a}\subset \{\la x,\nu_i \ra < {\bf d_{\it{i}}}(1+{\bf a_{\it{i}}})\}$. Thus,
\begin{equation}
B = \sum_{i\in \mathscr{I}^+} \int_1^{1+{\bf a_{\it{i}}}} \frac{{\bf d_{\it{i}}}}{r}\mathcal{H}^{n-1}(rF_i \cap (K^{\bf a}\setminus K)) \ dr.
\end{equation}
To estimate $\mathcal{H}^{n-1}(rF_i \cap (K^{\bf a}\setminus K))$ we wish to rewrite it as $\mathcal{H}^{n-1}(F_i^{\bf a_1}\cap F_i ^{\bf a_2})$ and use the fact that
\begin{equation}\label{rewrite intersection estimate}
\mathcal{H}^{n-1}(F_i^{\bf a_1})+\mathcal{H}^{n-1}(F_i^{\bf a_2})-2\mathcal{H}^{n-1}(F_i^{\bf a_1}\cap F_i^{\bf a_2}) = \mathcal{H}^{n-1}(F_i^{\bf a_1}\Delta F_i^{\bf a_2})
\end{equation}
together with Proposition \ref{Symmetric Difference Perturbed Facet Bound}. First we choose ${\bf a_1}$ so that ${\bf a_{1,\it{j}}}=r-1$ for all $j\in \mathscr{I}$. With this, 
\begin{equation}
K^{\bf a_1} = \bigcap_{j\in \mathscr{I}} \{\la x,\nu_j\ra < {\bf d_{\it{j}}}(1+{\bf a_{1,\it{j}}})\} = \bigcap_{j\in \mathscr{I}} \{\la x,\nu_j\ra < r{\bf d_{\it{j}}}\} = rK.
\end{equation}

As for ${\bf a_2}$, notice that for any $r\in [1,1+{\bf a_{\it{i}}})$ we have that $rF_i \cap K = \emptyset$ and therefore $rF_i \cap (K^{\bf a}\setminus K) = rF_i \cap K^{\bf a}$. Next since $rF_i \subset \Sigma_{\nu_i}^r$ we further have $rF_i \cap (K^{\bf a}\setminus K) = rF_i \cap (K^{\bf a}\cap \Sigma_{\nu_i}^r)$. We aim to write $\Cl(K^{\bf a})\cap \Sigma_{\nu_i}^r = F_i^{\bf a_2}$ for some ${\bf a_2}$, which is sufficient since, if this is the case, then $(K^{\bf a}\cap \Sigma_{\nu_i}^r)\Delta F_i^{\bf a_2} = \partial K^{\bf a} \cap \Sigma_{\nu_i}^r$ the latter being at most $n-2$ dimensional.

Consider ${\bf a_2}$ which satisfies ${\bf a_{2,\it{i}}}=r-1$ and ${\bf a_{2,\it{j}}}={\bf a_{\it{j}}}$ for $j\neq i$. By definition
\begin{equation}
F_i^{\bf a_2} := \partial K^{\bf a_2}\cap \Sigma_{\nu_i}^{1+{\bf a_{2,\it{i}}}} = \partial K^{\bf a_2}\cap \Sigma_{\nu_i}^r = \Cl(K^{\bf a_2}) \cap \Sigma_{\nu_i}^r
\end{equation}
where the remaining equalities hold by choice of ${\bf a_2}$ and the fact that $K^{\bf a_2}\subset \{\la x,\nu_i \ra < r{\bf d_{\it{i}}}\}$, implying $K^{\bf a_2}\cap \Sigma_{\nu_i}^r=\emptyset$. On the other hand $x\in F_i^{\bf a_2}$ if and only if $\la x,\nu_i \ra = r{\bf d_{\it{i}}} \leq {\bf d_{\it{i}}}(1+{\bf a_{\it{i}}})$ and $\la x,\nu_j \ra \leq {\bf d_{\it{j}}}(1+{\bf a_{\it{j}}})$ for $j\neq i$. Consequently $F_i^{\bf a_2} = \Cl(K^{\bf a})\cap \Sigma_{\nu_i}^r$.

Invoking Proposition \ref{Symmetric Difference Perturbed Facet Bound}, there exists $C(n,K)>0$ such that 
\begin{equation}\label{sym diff bound cut facet}
\mathcal{H}^{n-1}(F_i^{\bf a_1}\Delta F_i^{\bf a_2})\leq CM_{\Phi}^{n-1}\|{\bf a_1}-{\bf a_2}\|_{\ell^{\infty}}.
\end{equation}
Applying Corollary \ref{Asymptotics of Parallel Facet Areas}
to both $\mathcal{H}^{n-1}(F_i^{\bf a_1})$ and $\mathcal{H}^{n-1}(F_i^{\bf a_2})$, comparing with $\mathcal{H}^{n-1}(F_i^{\bf a})$, we have that
\begin{align}
\mathcal{H}^{n-1}(F_i^{\bf a_1}) &\geq \mathcal{H}^{n-1}(F_i^{\bf a}) - CM_{\Phi}^{n-1}\|{\bf a_1}-{\bf a}\|_{\ell^{\infty}};\label{bound Fia1}\\
\mathcal{H}^{n-1}(F_i^{\bf a_2}) &\geq \mathcal{H}^{n-1}(F_i^{\bf a}) - CM_{\Phi}^{n-1}\|{\bf a_2}-{\bf a}\|_{\ell^{\infty}}. \label{bound Fia2}
\end{align}
Finally notice by our choice of ${\bf a_1}$ and ${\bf a_2}$ that
\begin{equation}
\|{\bf a_1}-{\bf a_2}\|_{\ell^{\infty}} = \max_{j\neq i}|r-1-{\bf a_{\it{j}}}|,  \ \ \ \|{\bf a_1}-{\bf a}\|_{\ell^{\infty}} = \max_{j \in \mathscr{I}}|r-1-{\bf a_{\it{j}}}|, \ \ \ \|{\bf a_2}-{\bf a}\|_{\ell^{\infty}} = |r-1-{\bf a_{\it{i}}}|,
\end{equation}
which are all bounded above by $\max_{j\in \mathscr{I}}|r-1-{\bf a_{\it{j}}}|$. Substituting \eqref{sym diff bound cut facet}, \eqref{bound Fia1}, and \eqref{bound Fia2} with this observation into \eqref{rewrite intersection estimate} yields
\begin{align}
\mathcal{H}^{n-1}(rF_i \cap (K^{\bf a}\setminus K)) &= \mathcal{H}^{n-1}(rF_i \cap (K^{\bf a}\cap \Sigma_{\nu_i}^r)) =\mathcal{H}^{n-1}(F_i^{\bf a_1}\cap F_i^{\bf a_2}) \\
&\geq \frac{1}{2}\left(\mathcal{H}^{n-1}(F_i^{\bf a_1})+\mathcal{H}^{n-1}(F_i^{\bf a_2})-CM_{\Phi}^{n-1}\max_{j\in \mathscr{I}}|r-1-{\bf a_{\it{j}}}|\right)\\
&\geq \mathcal{H}^{n-1}(F_i^{\bf a})-\frac{3}{2}CM_{\Phi}^{n-1}\max_{j\in \mathscr{I}}|r-1-{\bf a_{\it{j}}}|.
\end{align}
Hence, absorbing constants
\begin{align}
-B &=\sum_{i\in \mathscr{I}^+} {\bf d_{\it{i}}}\int_1^{1+{\bf a_{\it{i}}}} -\frac{1}{r}\mathcal{H}^{n-1}(rF_i \cap (K^{\bf a}\setminus K)) \ dr \\
&\leq \sum_{i\in \mathscr{I}^+} {\bf d_{\it{i}}}\int_1^{1+{\bf a_{\it{i}}}} \left(-\frac{{\bf m_{\it{i}}^a}}{r}+C\max_{j\in \mathscr{I}}\left|\frac{1+{\bf a_{\it{j}}}}{r}-1\right|\right) \ dr \leq -\sum_{i\in \mathscr{I}^+} {\bf d_{\it{i}}}{\bf a_{\it{i}}}{\bf m_{\it{i}}^a}+O(\|{\bf a}\|_{\ell^{\infty}}^2), 
\end{align}
where the latter term in the integral is estimated similar to that in \eqref{estimate A1}.  
\end{proof}

\section{Proof of the Main Result}\label{Proof of result}
\subsection{With uniform density estimates} In this section we show that Theorem \ref{Main Result} holds under the assumption that $E$ satisfies uniform density estimates.
\begin{prop}\label{Main Result UDEs} There exists $\epsilon_0(n,K)>0$ and $C(n,K)>0$ such that for any $E\subset \mathbb{R}^n$ satisfying the uniform density estimate \eqref{Uniform density estimate}, $|E|=|K|$, and $|E\Delta K|\leq \epsilon_0$ then we have
$$\beta_{\Phi}(E)^{2} \leq C\,\delta_{\Phi}(E).$$
\end{prop}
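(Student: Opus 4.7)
The plan is to reduce the stability inequality for $E$ to the already-established version for parallel polytopes (Proposition \ref{Result for Parallel Polytopes}), via the replacement theorem (Theorem \ref{Replacement}), and then to absorb the residual $\gamma_{\Phi}$-difference using a Lipschitz-type continuity that exploits the uniform density estimate on $E$. Concretely, given $E$ as in the hypothesis, Theorem \ref{Replacement} produces $K^{\bf a}\in \mathcal{C}(K)$ satisfying $\Phi(E)-\Phi(K^{\bf a})\geq \gamma|E\Delta K^{\bf a}|$. Since $|K^{\bf a}|=|K|$, the Wulff inequality gives $\Phi(K^{\bf a})\geq n|K|$, hence $\delta_{\Phi}(K^{\bf a})\leq \delta_{\Phi}(E)$ and
\begin{equation*}
|E\Delta K^{\bf a}|\leq \frac{n|K|}{\gamma}\delta_{\Phi}(E).
\end{equation*}

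The regime $\delta_{\Phi}(E)\geq \delta_0$ for a fixed small $\delta_0(n,K)>0$ is trivial, because $\beta_{\Phi}(E)^2\leq 1+\delta_{\Phi}(E)$ is universally bounded under the standing hypotheses $|E|=|K|$, $|E\Delta K|\leq \epsilon_0$; so I may assume $\delta_{\Phi}(E)<\delta_0$. By the triangle inequality $|K^{\bf a}\Delta K|\leq \epsilon_0+(n|K|/\gamma)\delta_0$, so Lemma \ref{Equivalence} forces $\|{\bf a}\|_{\ell^{\infty}}\leq a_0$ provided $\epsilon_0,\delta_0$ are chosen small enough. Translating $K^{\bf a}$ to the representative realizing $|K^{\bf a}\Delta K|=\inf_x|K\Delta(K^{\bf a}+x)|$ (all of $\delta_{\Phi},\beta_{\Phi},\gamma_{\Phi}$ are translation invariant by Remark \ref{Invariance}), Proposition \ref{Result for Parallel Polytopes} applies and gives
\begin{equation*}
\beta_{\Phi}(K^{\bf a})^2\leq C\,\delta_{\Phi}(K^{\bf a})\leq C\,\delta_{\Phi}(E).
\end{equation*}

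The identity $\beta_{\Phi}(F)^2=\delta_{\Phi}(F)+1-\frac{n-1}{n|K|}\gamma_{\Phi}(F)$, valid for any $F$ with $|F|=|K|$ and immediate from \eqref{gamma}, yields
\begin{equation*}
\beta_{\Phi}(E)^2=\beta_{\Phi}(K^{\bf a})^2+\bigl(\delta_{\Phi}(E)-\delta_{\Phi}(K^{\bf a})\bigr)+\frac{n-1}{n|K|}\bigl(\gamma_{\Phi}(K^{\bf a})-\gamma_{\Phi}(E)\bigr).
\end{equation*}
The first two terms are each bounded by a multiple of $\delta_{\Phi}(E)$ by the previous step. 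Everything reduces to bounding the third term by $\delta_{\Phi}(E)$.

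This is where I expect the main obstacle to lie. The H\"older estimate from Proposition \ref{Properties} only produces the weaker bound $|\gamma_{\Phi}(E)-\gamma_{\Phi}(K^{\bf a})|\leq C|E\Delta K^{\bf a}|^{(n-1)/n}\leq C\delta_{\Phi}(E)^{(n-1)/n}$, which is strictly too weak as $\delta_{\Phi}(E)\to 0$. The uniform density estimate on $E$ (together with the convexity of $K^{\bf a}$) upgrades $L^1$-closeness to Hausdorff closeness of the boundaries, so $E\Delta K^{\bf a}$ is contained in a thin tubular neighborhood of $\partial K^{\bf a}$ that stays uniformly bounded away from any center $y_{K^{\bf a}}$ of $K^{\bf a}$, i.e. from the singular set of the kernel $1/f_*(\cdot-y_{K^{\bf a}})$ appearing in the definition of $\gamma_{\Phi}$. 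Since $1/f_*(\cdot-y_{K^{\bf a}})$ is then bounded on $E\Delta K^{\bf a}$, one obtains the linear estimate $|\gamma_{\Phi}(E)-\gamma_{\Phi}(K^{\bf a})|\leq C|E\Delta K^{\bf a}|\leq C\delta_{\Phi}(E)$. I expect this is precisely the strengthened continuity packaged as the forthcoming Proposition \ref{Vanishing Gradient}. Combining the three bounds closes the proof.
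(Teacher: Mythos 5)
Your proposal is correct and follows essentially the same route as the paper: reduce to the polytope $K^{\bf a*}$ via Theorem \ref{Replacement} (which yields both $\delta_{\Phi}(K^{\bf a*})\leq\delta_{\Phi}(E)$ and $|E\Delta K^{\bf a*}|\lesssim\delta_{\Phi}(E)$), apply Proposition \ref{Result for Parallel Polytopes} to the polytope, and absorb the residual term $\gamma_{\Phi}(K^{\bf a*})-\gamma_{\Phi}(E)$ via the Lipschitz estimate of Proposition \ref{Vanishing Gradient}, whose mechanism (uniform density estimates give Hausdorff closeness, so $1/f_*(\cdot-y)$ is controlled on $E\Delta K^{\bf a*}$) you identified correctly. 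The only cosmetic difference is how you guarantee $\|{\bf a}\|_{\ell^{\infty}}\leq a_0$: you use a triangle inequality together with a case split on the size of the deficit, whereas the paper invokes the bound $\|{\bf a*}\|_{\ell^{\infty}}\leq C|E\Delta K|$ from Remark \ref{Properties of Ka*}; both work.
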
 
The primary obstacle to proving Proposition \ref{Main Result UDEs} is that we need Lipschitz continuity of $\gamma_{\Phi}$ rather than the H\"{o}lder continuity given by Proposition \ref{Properties}. The goal then is to prove a statement about the Lipschitz continuity of $\gamma_{\Phi}$ at the parallel polytope $K^{\bf a*}$ obtained from Theorem \ref{Replacement}. 
\begin{prop}\label{Vanishing Gradient} There exists $\epsilon_0(n,K)>0$ such that for any  $E\subset \mathbb{R}^n$ satisfying the uniform density estimate \eqref{Uniform density estimate}, $|E|=|K|$, and $|E\Delta K|\leq \epsilon_0$ then we have
$$|\gamma_{\Phi}(E)-\gamma_{\Phi}(K^{{\bf a}*})| \leq \frac{1}{2}|E\Delta K^{{\bf a}*}|.$$
\end{prop}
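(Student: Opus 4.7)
The plan is to upgrade the H\"older continuity of Proposition \ref{Properties} (exponent $(n-1)/n$) to a Lipschitz-type estimate with small constant by using the uniform density estimate to promote $L^1$-closeness into Hausdorff-closeness. Once $E\Delta K^{{\bf a}*}$ is forced into a thin tubular neighborhood of $\partial K^{{\bf a}*}$, the integrand $1/f_*(\,\cdot\,-y_E)$ is uniformly close to $1$ there, and the equal-volume constraint $|E|=|K^{{\bf a}*}|=|K|$ makes the contributions from $E\setminus K^{{\bf a}*}$ and $K^{{\bf a}*}\setminus E$ almost cancel.

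First I would show that, shrinking $\epsilon_0$ if necessary, the polytope $K^{{\bf a}*}$ produced by Theorem \ref{Replacement} has $\|{\bf a}^*\|_{\ell^\infty}$ arbitrarily small. The uniform density estimate together with $|E|=|K|$ controls $\Phi(E)$ a priori, and combined with $|E\Delta K|\to 0$ yields $\Phi(E)\to \Phi(K)$; since $\Phi(K)\leq \Phi(K^{{\bf a}*})\leq \Phi(E)$, this forces $\Phi(K^{{\bf a}*})\to \Phi(K)$. The sharp quantitative Wulff inequality \eqref{sharp stability} then gives $|K\Delta K^{{\bf a}*}|\to 0$, and Lemma \ref{Equivalence} translates this into $\|{\bf a}^*\|_{\ell^\infty}\to 0$; the triangle inequality also gives $|E\Delta K^{{\bf a}*}|\to 0$. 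I would then invoke the standard fact that $L^1$-convergence plus uniform density estimates upgrades to Hausdorff-convergence of boundaries (cf.\ Lemma \ref{Closeness}) to conclude $d_H(\partial E,\partial K^{{\bf a}*})\leq \delta$ for some $\delta=\delta(\epsilon_0)\to 0$, so that $E\Delta K^{{\bf a}*}\subset N_\delta:=\{x:\dist(x,\partial K^{{\bf a}*})\leq \delta\}$.

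Since $f_*\equiv 1$ on $\partial K$, the $1$-homogeneity and Lipschitz continuity of $f_*$ combined with $d_H(\partial K,\partial K^{{\bf a}*})\leq C\|{\bf a}^*\|_{\ell^\infty}$ and $d_H(\partial E,\partial K^{{\bf a}*})\leq\delta$ yield $|f_*(x)-1|\leq \omega(\epsilon_0)$ uniformly on $N_\delta$, with $\omega(\epsilon_0)\to 0$. Lemma \ref{Center norm} applied to $E$, and a routine variant applied to $K^{{\bf a}*}$, give $|y_E|,|y_{K^{{\bf a}*}}|\to 0$, hence $1/f_*(x-y) = 1+o(1)$ uniformly for $x\in N_\delta$ and $y\in\{y_E,y_{K^{{\bf a}*}}\}$. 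Testing the supremum defining $\gamma_\Phi(E)$ at $y_E$ and bounding $\gamma_\Phi(K^{{\bf a}*})$ below by the same test point gives
\begin{equation}
\gamma_\Phi(E)-\gamma_\Phi(K^{{\bf a}*})\leq \int_{E\setminus K^{{\bf a}*}}\frac{dx}{f_*(x-y_E)}-\int_{K^{{\bf a}*}\setminus E}\frac{dx}{f_*(x-y_E)};
\end{equation}
since $|E|=|K^{{\bf a}*}|$ forces $|E\setminus K^{{\bf a}*}|=|K^{{\bf a}*}\setminus E|=\tfrac12|E\Delta K^{{\bf a}*}|$, and both integrands equal $1+o(1)$ on $N_\delta\supset E\Delta K^{{\bf a}*}$, the leading $1$s cancel and the right-hand side is $o(1)\cdot|E\Delta K^{{\bf a}*}|$. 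Repeating the argument at the test point $y_{K^{{\bf a}*}}$ handles the opposite sign, and shrinking $\epsilon_0$ so that $o(1)<\tfrac12$ closes the proof.

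The main obstacle is the first step: extracting $\|{\bf a}^*\|_{\ell^\infty}\to 0$ purely from $|E\Delta K|\to 0$ without assuming $\delta_\Phi(E)$ is small. This is where the uniform density estimate is used in an essential way, to upgrade raw $L^1$-smallness of $|E\Delta K|$ to control on $\Phi(E)-\Phi(K)$; without it, Theorem \ref{Replacement} produces only a vaguely-related $K^{{\bf a}*}$ and the integrand $1/f_*(\cdot-y_E)$ cannot be pinned near $1$ on $E\Delta K^{{\bf a}*}$. Once the quantitative linkage $\epsilon_0\mapsto(\|{\bf a}^*\|_{\ell^\infty},\delta,|y_E|,|y_{K^{{\bf a}*}}|)$ is pinned down, the cancellation driven by the equal-volume constraint is essentially automatic.
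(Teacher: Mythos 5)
Your overall strategy is the same as the paper's: use the uniform density estimates to upgrade $L^1$-closeness into Hausdorff closeness of $E$ and $K^{{\bf a}*}$ to $K$ (so that $f_*$ is uniformly within $\epsilon$ of $1$ on $E\Delta K^{{\bf a}*}$), control the centers $y_E$ and $y_{K^{{\bf a}*}}$ via Lemma \ref{Center norm}, test the supremum defining $\gamma_{\Phi}$ at the appropriate center, and let the equal-volume constraint cancel the leading-order contributions of $E\setminus K^{{\bf a}*}$ and $K^{{\bf a}*}\setminus E$. That part of the argument is correct and matches the paper's proof essentially line for line.

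The genuine gap is in your first step, where you claim that the uniform density estimate ``controls $\Phi(E)$ a priori,'' so that $|E\Delta K|\to 0$ forces $\Phi(E)\to\Phi(K)$, hence $\delta_{\Phi}(K^{{\bf a}*})\to 0$, hence (via \eqref{sharp stability} and Lemma \ref{Equivalence}) $\|{\bf a}^*\|_{\ell^{\infty}}\to 0$. Two-sided volume density estimates give a lower bound on perimeter in balls but no upper bound: a set obtained from $K$ by attaching a thin layer of fine oscillations at scale $\rho\ll r_0$ along $\partial K$ satisfies \eqref{Uniform density estimate} with constants independent of $\rho$, has $|E\Delta K|$ as small as you like, and has $\Phi(E)$ as large as you like. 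So $\Phi(E)\to\Phi(K)$ does not follow from your hypotheses, and with it your derivation of $\delta_{\Phi}(K^{{\bf a}*})\to 0$ collapses. The repair is to bypass perimeter convergence entirely: the construction of $K^{{\bf a}*}$ in \cite[Lemma 2.2]{FZ2019} (see Remark \ref{Properties of Ka*}) gives the direct estimate $\|{\bf a}^*\|_{\ell^{\infty}}\leq C(n,K)\,|E\Delta K|$, which together with Lemma \ref{Equivalence} yields \eqref{Parallel L1 close} and \eqref{hausdorff closeness Ka*}; this is exactly what the paper does, and with that substitution the rest of your proof goes through unchanged.
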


We begin with the following essential remark, which shows that in fact the polytope $K^{\bf a*}$ is unique and justifies our labeling.
\begin{rem}\label{Properties of Ka*}
Returning to Theorem \ref{Replacement}, the set $K^{\bf a}$ obtained is actually the unique such $K^{\bf a}$ satisfying that $\mathcal{H}^{n-1}(\partial E \cap \Cone(F_i^{\bf a})) = \mathcal{H}^{n-1}(\partial K^{\bf a} \cap \Cone(F_i^{\bf a}))$ for each $i\in \mathscr{I}$; call it $K^{{\bf a}*}$. In some sense, $K^{{\bf a}*}$ is like a projection of $E$ onto a close polytope to $K$. Moreover, as discussed in \cite[Lemma 2.2]{FZ2019}, we actually have $|{\bf a*}|\leq C(n,K)|E\Delta K|$.  Note then that by Lemma \ref{Equivalence} we have
\begin{equation}\label{Parallel L1 close}
|K^{{\bf a}*}\Delta K| \leq C(n,K)\|{\bf a*}\|_{\ell^{\infty}} \leq C(n,K)|E\Delta K|
\end{equation}
where $C(n,K)>0$ changes throughout.
\end{rem}
We now show that if $E$ is $L^1$ close to $K$ and satisfies the uniform density estimates, then actually both $E$ and $K^{\bf a*}$ are close in $d_H$ to $K$. 
\begin{lem}\label{Closeness} There exists $C(n,K)>0$ such that if $E\subset \mathbb{R}^n$ satisfies the uniform density estimate \eqref{Uniform density estimate} then
\begin{equation}\label{hausdorff closeness E}
d_H(\partial E,\partial K)^n \leq C\, |E\Delta K|.
\end{equation}
and if in addition $E$ satisfies the hypotheses of Proposition \ref{Vanishing Gradient},
\begin{equation}\label{hausdorff closeness Ka*}
d_H(\partial K^{{\bf a}*}, \partial K)\leq C|E\Delta K|.
\end{equation}
\end{lem}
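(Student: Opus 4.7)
The plan is to prove the two estimates separately. For \eqref{hausdorff closeness E}, I use the uniform density estimates together with convexity of $K$; for \eqref{hausdorff closeness Ka*}, I chain together the previously quoted results in an essentially routine way.

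Set $r := d_H(\partial E, \partial K)$. After possibly shrinking $\epsilon_0$, it suffices to consider $r < r_0$, as otherwise the argument below applied at scale $r_0$ forces $|E \Delta K|$ to be bounded below by a positive constant and the conclusion is trivial for $C$ large. Up to an approximation argument, we may assume either some $x \in \partial^* E$ realizes $d(x, \partial K) \geq r$, or some $y \in \partial K$ realizes $d(y, \partial E) \geq r$. In the first case, $B_r(x)$ lies entirely in $K$ or entirely in $K^c$; the upper (resp.\ lower) half of \eqref{Uniform density estimate} gives that at least $c_0 \epsilon_{\Phi}^n \omega_n r^n$ of the volume of $B_r(x)$ lies in $K \setminus E$ (resp.\ $E \setminus K$), and hence in $E \Delta K$. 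In the second case, $B_r(y)$ lies entirely inside $E$ or entirely inside $E^c$ modulo null sets. By convexity, if $H$ is a supporting half-space of $K$ at $y$, then $B_r(y) \setminus K \supset B_r(y) \setminus H$ has measure $\omega_n r^n / 2$; on the other hand, $|B_r(y) \cap K| \geq c_K \omega_n r^n$ for some $c_K > 0$ depending only on the minimum solid angle at vertices of $K$, which is positive since $K$ has finitely many vertices. Either way $|E \Delta K| \geq c(n, K)\, r^n$, which gives \eqref{hausdorff closeness E}.

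For \eqref{hausdorff closeness Ka*}, I chain two inequalities. Remark \ref{Properties of Ka*} yields $\|{\bf a*}\|_{\ell^{\infty}} \leq C(n, K)|E \Delta K|$; Lemma \ref{Equivalence} yields $d_H(K^{{\bf a}*}, K) \leq C(n, K)\|{\bf a*}\|_{\ell^{\infty}}$. After shrinking $\epsilon_0$ so that $\|{\bf a*}\|_{\ell^{\infty}} < a_0$, ensuring $K^{{\bf a}*} \in \mathcal{C}_{\text{par}}(K)$ and that both $K^{{\bf a}*}$ and $K$ contain $0$ in their interior, the identity $d_H(\partial K^{{\bf a}*}, \partial K) = d_H(K^{{\bf a}*}, K)$, a standard fact for convex bodies with nonempty interior, finishes the proof.

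The main obstacle is the interior bound $|B_r(y) \cap K| \geq c_K \omega_n r^n$ in the second case of \eqref{hausdorff closeness E}: while trivial at smooth (flat) boundary points by the supporting half-space argument, at a vertex it requires a uniform positive lower bound on the interior solid angle. This is automatic for the polytope $K$ since there are finitely many vertices, but it is precisely where the dependence of the universal constant on $K$ (beyond $n$) enters.
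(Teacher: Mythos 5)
Your proof is correct and follows essentially the same route as the paper: the first estimate is the standard density-estimate argument (the paper simply cites \cite[Lemma 3.4]{Neumayer2016} and sketches the $x\in\partial^*E$ case; you also carry out the $y\in\partial K$ case, which is needed for the other half of the Hausdorff distance and where the convexity of $K$ enters), and the second estimate is the same chain through Remark \ref{Properties of Ka*}, Lemma \ref{Equivalence}, and the identity $d_H(\partial A,\partial B)=d_H(A,B)$ for convex bodies. No substantive issues.
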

\begin{proof} The content of \eqref{hausdorff closeness E} is precisely \cite[Lemma 3.4]{Neumayer2016}. It follows from a standard argument estimating $|E\Delta K|$ from below in terms of either $|B_d(x) \cap E|$ or $|B_d(x)\setminus E|$, where $d:=d_H(\partial E, \partial K)$ and $x\in \partial^* E$, since $B_d(x)$ is either contained entirely in $K^c$ or in $K$ respectively,. 

As for \eqref{hausdorff closeness Ka*}, by Remark \ref{Properties of Ka*} there exists $C_1(n,K)>0$ such that
$$|K^{{\bf a}*}\Delta K| \leq C_1|E\Delta K|.$$
By Lemma \ref{Equivalence} we additionally find $C_2(n,K)>0$ such that
\begin{equation}
d_H(K^{\bf a*},K) \leq C_2|K^{\bf a*}\Delta K|.
\end{equation}
In turn,
$$d_H(\partial K^{{\bf a}*},\partial K)=d_H(K^{{\bf a}*},K) \leq C_2|K^{{\bf a}*}\Delta K|\leq C_1C_2|E\Delta K|.$$
The first equality holds for any two convex bodies, see for example \cite[Theorem 20]{Wil07}.
\end{proof}
From Lemma \ref{Closeness} we are ready to prove Proposition \ref{Vanishing Gradient}.
\begin{proof}[Proof of Proposition \ref{Vanishing Gradient}] We prove something more general. Let $0<\epsilon<1/2$ and choose $\epsilon_0< \eta$ as in Lemma \ref{Center norm}. Up to decreasing $\epsilon_0$ (requiring it to now also depend on $n$ and $K$), the bounds 
\eqref{hausdorff closeness E} and \eqref{hausdorff closeness Ka*} on the Hausdorff distances of $E$ and $K^{\bf a*}$ to $K$ respectively imply that
\begin{equation}\label{Containment}
(1-\epsilon)K\subset E, K^{{\bf a}*} \subset (1+\epsilon)K.
\end{equation}
Next, by the control on the center from Lemma \ref{Center norm} we have $|y_E|<\epsilon$. However, by Remark \ref{Properties of Ka*}, up to further decreasing $\epsilon_0$, we can also apply Lemma \ref{Center norm} to $K^{\bf a*}$ and conclude that $|y_{K^{{\bf a}*}}|<\epsilon$ too. 

Recall that $f_*(x) = \sup\{\la x,\nu \ra \ | \ f(\nu)\leq 1\}$. We estimate $f_*(x-y)$ for any $x,y \in \mathbb{R}^n$ by a direct application of Cauchy-Schwarz to obtain
\begin{equation}\label{Gauge bound}
f_*(x)-|y|\leq f_*(x-y)\leq f_*(x)+|y|.
\end{equation}
Now, by \eqref{Containment} we see that if $x\in E\cup K^{{\bf a}*}$ then $f_*(x) \leq 1+\epsilon$ owing to the 1-homogeneity of $f_*$. Similarly, if $x\in (E\cap K^{{\bf a}*})^c$ then $f_*(x)\geq 1-\epsilon$. Accordingly, for all $x\in E\Delta K^{{\bf a}*}$,
\begin{equation}\label{Gauge bound eps}
(1-\epsilon)-|y|\leq f_*(x-y)\leq (1+\epsilon)+|y|.
\end{equation}
If $\gamma_{\Phi}(E)\leq\gamma_{\Phi}(K^{{\bf a}*})$ then, choosing $y=y_{K^{{\bf a}*}}$ we get 
\begin{align*}
\gamma_{\Phi}(K^{{\bf a}*})-\gamma_{\Phi}(E) &\leq \int_{K^{{\bf a}*}} \frac{1}{f_*(x-y_{K^{{\bf a}*}})} \ dx - \int_{E} \frac{1}{f_*(x-y_{K^{{\bf a}*}})} \ dx \\
&= \int_{K^{{\bf a}*}\setminus E} \frac{1}{f_*(x-y_{K^{{\bf a}*}})} \ dx - \int_{E\setminus K^{{\bf a}*}} \frac{1}{f_*(x-y_{K^{{\bf a}*}})} \ dx \\
& \leq \int_{K^{{\bf a}*}\setminus E} \frac{1}{1-2\epsilon} \ dx - \int_{E\setminus K^{{\bf a}*}} \frac{1}{1+2\epsilon} \ dx \leq 2\epsilon|E\Delta K^{{\bf a}*}|,
\end{align*}
where we have applied \eqref{Gauge bound eps} in both directions. The same inequality holds if $\gamma_{\Phi}(K^{{\bf a}*})\leq \gamma_{\Phi}(E)$ using $y_E$; it is for this reason that we need control on the containment of both $E$ and $K^{{\bf a}*}$. Thus,
$$|\gamma_{\Phi}(E)-\gamma_{\Phi}(K^{{\bf a}*})|<2\epsilon |E\Delta K^{{\bf a}*}|$$
and setting $\epsilon=1/4$ gives the result.
\end{proof}

We are finally ready to prove Proposition \ref{Main Result UDEs}.
\begin{proof} We use the properties of $\beta_{\Phi}$ and $\gamma_{\Phi}$, together with Theorem \ref{Replacement} and the Lipschitz continuity of $\gamma_{\Phi}$ from Proposition \ref{Vanishing Gradient} to conclude the result.

Let $\epsilon_0$ be small enough to invoke both Proposition \ref{Vanishing Gradient} and Theorem \ref{Replacement}. Then, by the latter, for some $C_1(n,K)>0$
\begin{equation}\label{Perimeter lower bound}
\Phi(E)-\Phi(K^{{\bf a}*}) \geq C_1 |E\Delta K^{{\bf a}*}|.
\end{equation}
By definition of $\delta_{\Phi}$ we can rewrite \eqref{Perimeter lower bound} as 
\begin{equation}\label{Deficit strong bound}
\frac{C_1}{n|K|}|E\Delta K^{{\bf a}*}| \leq \frac{\Phi(E)}{n|K|}-1+1-\frac{\Phi(K^{{\bf a}*})}{n|K|} = \delta_{\Phi}(E)-\delta_{\Phi}(K^{{\bf a}*})
\end{equation}
as $|E|=|K^{{\bf a}*}|=|K|$. Notice immediately \eqref{Deficit strong bound} implies
\begin{equation}\label{Deficit bound}
\delta_{\Phi}(K^{{\bf a}*}) \leq \delta_{\Phi}(E).
\end{equation} 
Now let us turn to estimating $\beta_{\Phi}(E)^2$. Again by definition,
\begin{equation}\label{beta defn}
\beta_{\Phi}(E)^2 = \frac{\Phi(E)}{n|K|}-\frac{(n-1)\gamma_{\Phi}(E)}{n|K|}.
\end{equation}
To handle the $\gamma_{\Phi}(E)$ term we appeal to the Lipschitz continuity of $\gamma_{\Phi}$ in Proposition \ref{Vanishing Gradient} to find
\begin{equation}\label{Lipschitz estimate}
|\gamma_{\Phi}(K^{{\bf a}*})-\gamma_{\Phi}(E)| \leq \frac{1}{2}|E\Delta K^{{\bf a}*}|.
\end{equation}
Rearranging \eqref{Lipschitz estimate} and substituting in \eqref{beta defn} gives
\begin{equation}\label{Oscillation bound}
\beta_{\Phi}(E)^2 \leq \frac{\Phi(E)}{n|K|}-\frac{(n-1)\gamma_{\Phi}(K^{{\bf a}*})}{n|K|} + \frac{n-1}{2n|K|}|E\Delta K^{{\bf a}*}|.
\end{equation}
We must bound each term on the right-hand side of \eqref{Oscillation bound} by $\delta_{\Phi}(E)$. We start with the first two terms. By adding and subtracting $\Phi(K^{{\bf a}*})/(n|K|)$ we have
\begin{align}
\frac{\Phi(E)}{n|K|}-\frac{(n-1)\gamma_{\Phi}(K^{{\bf a}*})}{n|K|} &= \frac{\Phi(E)}{n|K|}-\frac{\Phi(K^{{\bf a}*})}{n|K|}+\beta_{\Phi}(K^{{\bf a}*})^2 \nonumber \\
&=\delta_{\Phi}(E)-\delta_{\Phi}(K^{{\bf a}*})+\beta_{\Phi}(K^{{\bf a}*})^2.\label{First two terms estimation}
\end{align}
Since $K^{{\bf a}*}$ is parallel to $K$, up to translating by some $x^{\bf a*}$ we can apply Proposition \ref{Result for Parallel Polytopes} and find $C_2(n,K)$ such that $\beta_{\Phi}(K^{{\bf a}*}+x^{\bf a*})^2 \leq C_2\delta_{\Phi}(K^{{\bf a}*}+x^{\bf a^*})$. However, both $\beta_{\Phi}$ and $\delta_{\Phi}$ are translation invariant as seen in Remark \ref{Invariance}. Consequently, $\beta_{\Phi}(K^{\bf a*})^2\leq C_2\delta_{\Phi}(K^{\bf a*})$. Applying this together with \eqref{First two terms estimation} in \eqref{Oscillation bound} yields
\begin{equation}
\beta_{\Phi}(E)^2 \leq \delta_{\Phi}(E)+(C_2-1)\delta_{\Phi}(K^{{\bf a}*}) + \frac{n-1}{2n|K|}|E\Delta K^{{\bf a}*}|.
\end{equation}
As for the remaining term, we directly apply our bound  \eqref{Deficit strong bound} on $|E\Delta K^{\bf a*}|$ to conclude
$$\beta_{\Phi}(E)^2 \leq \left(1+\frac{(n-1)}{2C_1}\right)\delta_{\Phi}(E)+\left(\frac{(n-1)}{2C_1}+C_2-1\right)\delta_{\Phi}(K^{{\bf a}*}).$$ 
Letting $C_3 = \max\{0,(n-1)/(2C_1)+C_2-1\}$ we apply $\delta_{\Phi}(K^{\bf a*})\leq \delta_{\Phi}(E)$ from \eqref{Deficit bound} to conclude
$$\beta_{\Phi}(E)^2 \leq \left(1+C_3+\frac{(n-1)}{2C_1}\right)\delta_{\Phi}(E).$$ 
So setting 
$$C=1+\max\left\lbrace 0,\frac{(n-1)}{2C_1(n,K)}+C_2(n,K)-1\right\rbrace + \frac{(n-1)}{2C_1(n,K)},$$
which evidently depends only on $n$ and $K$ and is positive, the proof is complete.
\end{proof}
\subsection{Proof of Theorem \ref{Main Result}}
We are finally ready to prove the main theorem: 
\setcounter{section}{1}
\setcounter{thm}{0}
\begin{thm} There exists $C(n,K)>0$ such that for any $E\subset \mathbb{R}^n$ a set of finite perimeter with $0<|E|<\infty$,
$$\alpha_{\Phi}(E)^2+\beta_{\Phi}(E)^2 \leq C\delta_{\Phi}(E).$$
\end{thm}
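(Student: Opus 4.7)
The plan is to apply the selection principle. Since the sharp Fraenkel asymmetry bound $\alpha_{\Phi}(E)^{2}\leq C\delta_{\Phi}(E)$ is already established in \cite{FMP}, it suffices to prove $\beta_{\Phi}(E)^{2}\leq C\delta_{\Phi}(E)$. By scale invariance I may assume $|E|=|K|$. Suppose toward contradiction that no such $C$ exists; then there exists a sequence $\{E_j\}$ with $|E_j|=|K|$ and $\beta_{\Phi}(E_j)^{2}/\delta_{\Phi}(E_j)\to\infty$. From $\beta_{\Phi}(E_j)^{2}\leq \Phi(E_j)/(n|K|)=1+\delta_{\Phi}(E_j)$ we deduce $\delta_{\Phi}(E_j)\to 0$; the sharp Fraenkel stability then gives $|E_j\Delta K|\to 0$ up to translation, and Lemma \ref{Ball containment} lets me assume $E_j\subset B_{R_0}$ at the cost of replacing the ratio $\beta_{\Phi}(E_j)^{2}/\delta_{\Phi}(E_j)$ by a comparable one that still diverges.

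Next I perform the selection step. Set $\epsilon_j=\beta_{\Phi}(E_j)$ (which tends to $0$ by Proposition \ref{Properties}(ii), since $\Phi(E_j)\to\Phi(K)$) and take $F_j\subset B_{R_0}$ a minimizer of
$$Q_j(E) = \Phi(E) + \epsilon_{\Phi}|K|\bigl|\beta_{\Phi}(E)^{2}-\epsilon_j^{2}\bigr| + \Lambda\bigl||E|-|K|\bigr|,$$
with $\Lambda > 4n$ fixed, provided by Lemma \ref{Minimize Q}; each $F_j$ satisfies the uniform density estimate \eqref{Uniform density estimate} by Lemma \ref{Minimize Q Uniform}. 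The inequality $Q_j(F_j)\leq Q_j(E_j)=n|K|(1+\delta_{\Phi}(E_j))$, combined with the coercive lower bound $\Phi(F)+\Lambda||F|-|K||\geq \Phi(K)=n|K|$ of Lemma \ref{Unique Min} (which for $\Lambda>n$ in fact gives linear growth $\Phi(F)+\Lambda||F|-|K|| - n|K|\geq c(||F|-|K|| + \delta_{\Phi}(F))$ for $|F|$ in a bounded range), produces three quantitative estimates:
$$\delta_{\Phi}(F_j) \leq C\delta_{\Phi}(E_j), \qquad \bigl||F_j|-|K|\bigr|\leq C\delta_{\Phi}(E_j), \qquad \bigl|\beta_{\Phi}(F_j)^{2}-\beta_{\Phi}(E_j)^{2}\bigr| \leq C\delta_{\Phi}(E_j).$$
Standard $L^{1}$-compactness on $B_{R_0}$, lower semicontinuity of $\Phi$, Proposition \ref{Properties}(ii), and the rigidity in Lemma \ref{Unique Min} then identify, up to a fixed translation, the $L^{1}$-limit of $F_j$ as $K$ itself, so in particular $|F_j\Delta K|\to 0$.

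For $j$ large, the rescalings $\tilde F_j = (|K|/|F_j|)^{1/n}F_j$ satisfy $|\tilde F_j|=|K|$, obey \eqref{Uniform density estimate} with constants close to those for $F_j$, and have $|\tilde F_j\Delta K|\leq \epsilon_0$. Since $\beta_{\Phi}$ and $\delta_{\Phi}$ are scale invariant, Proposition \ref{Main Result UDEs} applies and yields
$$\beta_{\Phi}(F_j)^{2} = \beta_{\Phi}(\tilde F_j)^{2} \leq C\delta_{\Phi}(\tilde F_j) = C\delta_{\Phi}(F_j) \leq C'\delta_{\Phi}(E_j).$$
Combining with the third of the estimates above gives $\beta_{\Phi}(E_j)^{2}\leq C''\delta_{\Phi}(E_j)$ for some $C''=C''(n,K)$, contradicting $\beta_{\Phi}(E_j)^{2}/\delta_{\Phi}(E_j)\to\infty$.

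The main obstacle is extracting three estimates \emph{linear} in $\delta_{\Phi}(E_j)$ from a single inequality $Q_j(F_j)\leq Q_j(E_j)$: any weaker (e.g.\ sublinear) rate would break the final chain. The crucial quantitative input enabling the linear rate is the strict minimality of $K$ for $\Phi+\Lambda||\cdot|-|K||$ from Lemma \ref{Unique Min}, which supplies the required coercivity. Once the selection principle converts the generic low-energy sequence $\{E_j\}$ into one with uniform density estimates, Proposition \ref{Main Result UDEs}---and through it the parallel-polytope analysis of Section \ref{Control Parallel Polytopes} together with Theorem \ref{Replacement}---closes the contradiction.
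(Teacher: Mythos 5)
Your proof is correct and follows essentially the same selection-principle strategy as the paper: the same penalized functional $Q_j$, the same appeal to Lemmas \ref{Ball containment}, \ref{Minimize Q}, \ref{Minimize Q Uniform}, and the same endgame through Proposition \ref{Main Result UDEs}. The one sub-step you handle differently is the volume of the minimizers $F_j$: the paper proves $|F_j|=|K|$ \emph{exactly} for large $j$ by testing $Q_j$ against the dilations $r_jF_j$ and using $\Lambda>4n$, whereas you only extract $\bigl||F_j|-|K|\bigr|\leq C\delta_{\Phi}(E_j)$ from the coercivity of $\Phi+\Lambda\bigl||\cdot|-|K|\bigr|$ (which you correctly derive from the Wulff inequality, not really from Lemma \ref{Unique Min} as stated) and then rescale; this works, but strictly speaking it requires the observation that Proposition \ref{Main Result UDEs} still applies when \eqref{Uniform density estimate} holds with $r_0$ replaced by $\lambda_j r_0$, $\lambda_j\to 1$ --- a harmless but non-vacuous remark, since the proposition's hypothesis fixes $r_0(n,K)$. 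Your coercivity route buys explicit linear rates for all three quantities at once and avoids the paper's argument that $r_j=1$; the paper's route avoids the rescaling altogether.
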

\setcounter{section}{4}
\setcounter{thm}{7}
Following \cite{Neumayer2016}, we use the same selection principle argument. The proof essentially follows that in \cite[Theorem 1.1]{Neumayer2016}; we include it here for the sake of completeness.
\begin{proof}
By \eqref{anisotropic Isop. ineq} it suffices to show there exists $C(n,K)>0$ such that
\begin{equation}\label{Final}
\beta_{\Phi}(E)^2 \leq C\delta_{\Phi}(E).
\end{equation}
Moreover, by Lemma \ref{Ball containment} we may assume $E\subset B_{R_0}$. 

\noindent
\textit{Step 1: Generating a convergent sequence.} We will show Theorem \ref{Main Result} with a proof by contradiction. The first step is to take a minimizing sequence along which the claim is supposedly false and show it converges to (a translation of) $K$.

Suppose \eqref{Final} is false. Then we can find a sequence of sets $\{E_j\}_{j=1}^{\infty}$ in $B_{R_0}$ such that $|E_j|=|K|$, $\Phi(E_j)\to \Phi(K)$, but 
\begin{equation}\label{Final Contradiction}
\Phi(E_j)< \Phi(K)+C_2\beta_{\Phi}(E_j)^2
\end{equation}
for $C_2>0$ to be chosen later. The sequence $\{E_j\}_{j=1}^{\infty}$ will be replaced with a new one $\{F_j\}_{j=1}^{\infty}$ with more regularity to exploit. Since $E_j\subset B_{R_0}$ for all $j$ and $P(E)\leq \Phi(E)/m_{\Phi}<\infty$,
we have by compactness, see \cite[Theorem 12.26]{Maggi}, that $E_j\xto{L^1} E_{\infty}$ up to subsequence. Since $|E_j|=|K|$ for all $j$ we also have $|E_{\infty}|=|K|$. By \eqref{anisotropic Isop. ineq} and Proposition \ref{Properties} part ii),
$$n|K|\leq \Phi(E_{\infty}) \leq \liminf_{j\to \infty}\Phi(E_j) = \Phi(K)=n|K|,$$
so that $E_{\infty}=K$ up to translation. Futhermore, by Proposition \ref{Properties} parts i) and ii) and \eqref{gamma},
$$\lim_{j\to \infty} \gamma_{\Phi}(E_j) = \gamma_{\Phi}(K) =\frac{n|K|}{n-1}, \ \ \ \lim_{j\to \infty}\beta_{\Phi}(E_j) = \lim_{j\to \infty}\left[\frac{\Phi(E_j)}{n|K|} - \frac{n-1}{n|K|}\gamma_{\Phi}(E_j)\right]=0.$$
\noindent
\textit{Step 2: Replacing $E_j$ with almost-minimizers of $\Phi$.} We replace the sequence $\{E_j\}_{j=1}^{\infty}$ with a new sequence $\{F_j\}_{j=1}^{\infty}$ which minimize $Q_j$ among sets $E\subset B_{R_0}$, where
\begin{equation}
Q_j(E) := \Phi(E)+\frac{\epsilon_{\Phi}|K|}{8}|\beta_{\Phi}(E)^2-\beta_{\Phi}(E_j)^2| + \Lambda\big||E|-|K|\big|
\end{equation}
and $\Lambda>4n$ is fixed. Since the functionals $Q_j$ are slight modifications of the (volume-constrained) anisotropic perimeter, the $F_j$ are almost minimizers. The regularity theory of almost minimizers then allows us to show that for some $R_0>0$ 
\begin{enumerate}[label = \roman*)]
\item The following bounds hold for $\beta_{\Phi}(F_j)$ and $\Phi(F_j)$:
\begin{equation}\label{Bound Contradiction}
\beta_{\Phi}(E_j)^2\leq 2\beta_{\Phi}(F_j)^2 \ \ \ \text{and} \ \ \ \Phi(F_j)\leq \Phi(E_j).
\end{equation}
In particular, with the contradictory hypothesis \eqref{Final Contradiction},
\begin{equation}\label{Replaced Contradiction}
\Phi(F_j)< \Phi(K)+2C_2\beta_{\Phi}(F_j)^2.
\end{equation}
\item $|F_j|=|K|$ for $j$ sufficiently large, and
\item $F_j$ converges in $L^1$ to $K$,
\end{enumerate}
Items i) and ii) allow us to replace $E_j$ with $F_j$ and item iii) recovers the same information obtained in Step 1. 

To show \eqref{Bound Contradiction}, observe first that Lemma \ref{Minimize Q} guarantees a minimizer of $Q_j$ among $E\subset B_{R_0}$ exists -- let $F_j\in \argmin\{Q_j(E) \ | \ E\subset B_{R_0}\}$. Since $F_j$ minimizes $Q_j$, testing with $E_j$ yields
\begin{equation}\label{test Qj with Ej}
\Phi(F_j)+ \frac{\epsilon_{\Phi}|K|}{8}|\beta_{\Phi}(F_j)^2-\beta_{\Phi}(E_j)^2|+ \Lambda \big||F_j|-|K|\big|\leq \Phi(E_j)
\end{equation}
where we have used the fact that $Q_j(E_j)=\Phi(E_j)$ by construction. In particular, $\Phi(F_j)\leq \Phi(E_j)$. As for the bound $\beta_{\Phi}(E_j)^2\leq 2\beta_{\Phi}(F_j)^2$, combining \eqref{test Qj with Ej} with \eqref{Final Contradiction} we get
\begin{equation}\label{test with contradiction hyp.}
\Phi(F_j)+ \frac{\epsilon_{\Phi}|K|}{8}|\beta_{\Phi}(F_j)^2-\beta_{\Phi}(E_j)^2|+ \Lambda \big||F_j|-|K|\big|\leq \Phi(E_j)\leq \Phi(K)+C_2\beta_{\Phi}(E_j)^2.
\end{equation}
On the other hand, recall Lemma \ref{Unique Min}, which quantifies the fact that $K$ minimizes the volume-constrained minimization of $\Phi$. This lets us bound $\Phi(K)$ in terms of $F_j$ in \eqref{test with contradiction hyp.} and gives
\begin{equation}
\Phi(F_j)+ \frac{\epsilon_{\Phi}|K|}{8}|\beta_{\Phi}(F_j)^2-\beta_{\Phi}(E_j)^2|+ \Lambda \big||F_j|-|K|\big|\leq \Phi(F_j) + \Lambda \big||F_j|-|K|\big| + C_2\beta_{\Phi}(E_j)^2.
\end{equation}
It immediately follows that
\begin{equation}\label{beta bound contradiction}
\frac{\epsilon_{\Phi}|K|}{8}|\beta_{\Phi}(F_j)^2-\beta_{\Phi}(E_j)^2|\leq C_2\beta_{\Phi}(E_j)^2.
\end{equation}
From Step 1 we have $\beta_{\Phi}(E_j)\to 0$, and so $\beta_{\Phi}(F_j)\to 0$ too. Next, rearranging \eqref{beta bound contradiction} and choosing $C_2(n,K)$ sufficiently small,
\begin{equation}\label{beta lower bound contradiction}\frac{\beta_{\Phi}(E_j)^2}{2}\leq\beta_{\Phi}(E_j)^2 - \frac{8C_2}{\epsilon_{\Phi}|K|}\beta_{\Phi}(E_j)^2 \leq \beta_{\Phi}(F_j)^2, \ \ \ \text{i.e.} \ \ \beta_{\Phi}(E_j)^2 \leq 2\beta_{\Phi}(F_j)^2.
\end{equation}
This completes the proof of i). 

We now show item ii). Let $r_j>0$ be such that $|r_jF_j|=|K|$. By \eqref{Minimize Q Properties} we see that $r_j^n|K|/2\leq r_j^n|F_j|=|K|$, and so $r_j^n \leq 2$. By Lemma \ref{Minimize Q Uniform} and Lemma \ref{Closeness}, for $j$ sufficiently large we have $F_j \subset 2K$. Combined with $r_j\leq 2$, we see $2F_j \subset 4K$. Then, as long as $R_0>4M_{\Phi}$ we have $r_jF_j\subset 4K\subset B_{R_0}$. This allows us to test $r_jF_j$ against $F_j$ in $Q_j$ and obtain
\begin{align}
&\Phi(F_j)+ \frac{\epsilon_{\Phi}|K|}{8}|\beta_{\Phi}(F_j)^2-\beta_{\Phi}(E_j)^2|+ \Lambda \big||F_j|-|K|\big|\\
&\leq \Phi(r_jF_j)+ \frac{\epsilon_{\Phi}|K|}{8}|\beta_{\Phi}(r_jF_j)^2-\beta_{\Phi}(E_j)^2|= r^{n-1}\Phi(F_j)+ \frac{\epsilon_{\Phi}|K|}{8}|\beta_{\Phi}(F_j)^2-\beta_{\Phi}(E_j)^2|
\end{align}
owing to the fact that $|r_jF_j|=|K|$ and the scale invariance of $\beta_{\Phi}$ (see Remark \ref{Invariance}). Thus,
\begin{equation}\label{Anisotropic bound contradiction}
\Phi(F_j) +\Lambda\big||F_j|-|K|\big| \leq r_j^{n-1}\Phi(F_j).
\end{equation}
Since the latter term on the left-hand side of \eqref{Anisotropic bound contradiction} is non-negative, we see that $r_j\geq 1$ for all $j$. Moreover, as $F_j\xto{L^1}K$ we have $|F_j|\to|K|$, i.e. $r_j\to 1$. Suppose that, up to a subsequence, $r_j>1$. Since $|F_j|=|K|/r_j^n$, rearranging \eqref{Anisotropic bound contradiction} gives
\begin{equation}\label{Lambda bound contradiction}
\Lambda|K| \leq \left(\frac{r_j^n(r_j^{n-1}-1)}{r_j^n-1}\right)\Phi(F_j).
\end{equation}
Next, since $r_j^n(r_j^{n-1}-1)/(r_j^n-1)\to (n-1)/n$ as $r_j\to 1^+$ we have for any $0<\epsilon<1/n$ and $j$ sufficiently large that the right-hand side of \eqref{Lambda bound contradiction} is bounded by $(1-\epsilon)\Phi(F_j)$. Finally, testing $F_j$ against $K$ in $Q_j$ gives
\begin{equation}\label{Anisotropic bound 2 contradiction}
\Phi(F_j) \leq Q_j(F_j) \leq Q_j(K) = n|K|+\frac{\epsilon_{\Phi}|K|}{8}\beta_{\Phi}(E_j)^2.
\end{equation}
Combining \eqref{Lambda bound contradiction} with \eqref{Anisotropic bound 2 contradiction} for $j$ sufficiently large and $\beta_{\Phi}(E_j)\to 0$,
$$\Lambda|K| \leq (1-\epsilon)\Phi(F_j) \leq (1-\epsilon)\left(n|K|+\frac{\epsilon_{\Phi}|K|}{8}\beta_{\Phi}(E_j)^2\right) \leq n|K|.$$
However, we assumed that $\Lambda>4n$, a contradiction to the supposition that $r_j>1$. 

To show iii) we need a uniform upper bound on $P(F_j)$. From \eqref{Minimize Q Properties} we can control $\Phi(F_j)$ in terms of $\Phi(K)$, and so
$$P(F_j) \leq \frac{1}{m_{\Phi}}\Phi(F_j) \leq \frac{2\Phi(K)}{m_{\Phi}}<\infty.$$
Thus up to subsequence $F_j\xto{L^1}F_{\infty}$ for some $F_{\infty}\subset B_{R_0}$. Appealing to \eqref{test with contradiction hyp.}, since $\beta_{\Phi}(E_j)\to 0$,
\begin{align}
\Phi(F_{\infty})\hspace{-2pt}+\hspace{-2pt}\Lambda\liminf_{j\to \infty}\big||F_j|-|K|\big|&\leq \liminf_{j\to \infty}\left[\Phi(F_j)\hspace{-1pt}+\hspace{-1pt}\frac{\epsilon_{\Phi}|K|}{8}|\beta_{\Phi}(F_j)-\beta_{\Phi}(E_j)|^2 \hspace{-2pt}+\hspace{-2pt}\Lambda\big||F_j|-|K|\big|\right] \\
&\leq \liminf_{j\to\infty}\left[\Phi(K)+C_2\beta_{\Phi}(E_j)^2\right] = \Phi(K),
\end{align}
which implies that $F_{\infty}=K$ up to translation by \eqref{anisotropic Isop. ineq}. Without loss of generality, since all the involved quantities are translation invariant (see Remark \ref{Invariance}), we may translate each $F_j$ so that $|F_j\Delta K| = \inf\{|F_j\Delta (K+x) \ | \ x\in\mathbb{R}^n\}$. Thus, $F_j\xto{L^1}K$.

\noindent
\textit{Step 3: Deriving a contradiction to \eqref{Final Contradiction}.} We now use the properties of $F_j$ established in Step 2 to show that $F_j=K$ for $j$ sufficiently large. Since $\beta_{\Phi}(F_j)$ controls $\beta_{\Phi}(E_j)$, and we assumed that $\beta_{\Phi}(E_j)>0$ in \eqref{Final Contradiction}, this gives us a contradiction.

Recall that the $F_j$ minimize $Q_j$, and so by Lemma \ref{Minimize Q Uniform} they satisfy the uniform density estimates. Invoking Proposition \ref{Main Result UDEs}, we obtain $\epsilon_0(n,K)>0$ and $C(n,K)>0$, so that for $j$ sufficiently large as to ensure $|F_j \Delta K| \leq \epsilon_0$, we have
\begin{equation}\label{Oscillation Deficit Contradiction Bound}
\beta_{\Phi}(F_j)^2\leq C_1(n,K)\delta_{\Phi}(F_j).
\end{equation}
Since the $F_j$ minimize $Q_j$, we may test against $E_j$ and see that
$$\Phi(F_j) \leq \Phi(F_j) + \frac{\epsilon_{\Phi}|K|}{8}|\beta_{\Phi}(F_j)^2-\beta_{\Phi}(E_j)^2|\leq \Phi(E_j).$$
Recalling \eqref{Replaced Contradiction} from Step 2 item i), we can replace $F_j$ with $E_j$:
$$\Phi(F_j) \leq \Phi(E_j) \leq \Phi(K)+C_2\beta_{\Phi}(E_j)^2\leq \Phi(K)+2C_2\beta_{\Phi}(F_j)^2.$$
Finally, we use the control on the oscillation index by the deficit established in \eqref{Oscillation Deficit Contradiction Bound} to yield
$$\Phi(F_j)\leq \Phi(K)+2C_2C_1(n,K)\delta_{\Phi}(F_j).$$
Letting $C_2=n|K|/(2C_1)$ we see
$$\Phi(F_j)\leq \Phi(K)+2C_1C_2\delta_{\Phi}(F_j)= \Phi(K)+n|K|\left(\frac{\Phi(F_j)}{n|K|}-1\right) = \Phi(F_j),$$
implying that $F_j=K$ for $j$ sufficiently large (since we translated the $F_j$ to minimize the $L^1$ distance to $K$). However, this means that $\beta_{\Phi}(F_j)=0$ for $j$ sufficiently large, a contradiction to \eqref{Bound Contradiction} as $\beta_{\Phi}(E_j)>0$. 
\end{proof}

\bibliography{Bibliography}
\bibliographystyle{alpha}
\end{document}